\newtheorem{theorem}{Theorem}
\newtheorem{lemma}{Lemma}
\newtheorem{remark}{Remark}
\theoremstyle{remark}
\newcommand{\sym}{\hbox{Sym}}
\newcommand{\be}[1]{\begin{equation}\label{#1}} 
\newcommand{\ee}{\end{equation}}
\title[Transverse and mixed ray transforms]{Invertibility of local geodesic transverse and mixed ray transforms II: higher order tensors}
\subjclass[2010]{53C22, 53C65} 
\keywords{integral geometry, tensor fields, foliation condition}
    \author[G. Uhlmann]{Gunther Uhlmann}
\address{G. Uhlmann: Department of Mathematics, University of Washington, Seattle, WA 98195, USA; Institute for Advanced Study, 
The Hong Kong University of Science and Technology, Kowloon, Hong Kong, China (\tt{gunther@math.washington.edu})
}
\thanks{G. Uhlmann is partly supported by NSF}
  \author[J. Zhai]{Jian Zhai}
\address{J. Zhai: School of Mathematical Sciences, Fudan University, 220 Handan Road, Shanghai 200433, China
  (\tt{jianzhai@fudan.edu.cn}).}
\thanks{J. Zhai is supported by National Key Research and Development Programs of China (No. 2023YFA1009103), Science and Technology Commission of Shanghai Municipality (23JC1400501)}
\begin{document}

\begin{abstract}
Consider a compact Riemannian manifold in dimension $n$ with strictly convex boundary. We show the local invertibility near a boundary point of the transverse ray transform of $2$ tensors for $n\geq 3$ and the mixed ray transform of $2+2$ tensors for $n=3$. When the manifold admits a strictly convex function, this local invertibility result leads to global invertibility.
\end{abstract}

\maketitle 
\section{Introduction}
This is a subsequent paper of our work \cite{paper1} studying the ($s$-)injectivity of the transverse and mixed ray transforms. We will use the same notations and definitions adopted there.\\

Let us first recall the definition of the mixed ray transform. Assume $(M,g)$ is an $n$ dimensional compact Riemannian manifold with smooth boundary $\partial M$ and the dimension $n\geq 3$. 
The mixed ray transform $L_{k,\ell}f$ of a symmetric $(k,\ell)$-tensor $f\in C^\infty(S^k_\ell TM)$ is defined by the formula
\[
(L_{k,\ell}f)(\gamma,\eta)=\int f^{i_1\cdots i_k}_{j_1\cdots j_\ell}(\gamma(t))\eta(t)_{i_1}\cdots\eta(t)_{i_k}\dot{\gamma}(t)^{j_1}\cdots\dot{\gamma}(t)^{j_\ell}\mathrm{d}t,
\]
where $\gamma(t)$ is a geodesic in $(M,g)$ connecting two end points on $\partial M$, and $\eta$ is a unit parallel (co)vector field along $\gamma$ and $\eta$ is conormal to $\dot{\gamma}$.  The longitudinal ray transform $I_\ell=L_{0,\ell}$ has been studied quite extensively \cite{Muk2,Muk1,AR,Shara,stefanov2004stability,dairbekov2006integral,paternain2013tensor,paternain2015invariant,UV,stefanov2018inverting,de2019inverting}. For $\ell=0$, $T_k=L_{k,0}$ is called the transverse ray transform. For the results on the transverse and mixed ray transform, we refer to \cite{Shara,de2018mixed,de2021generic}. For more detailed literature review, see our previous paper \cite{paper1}.

In \cite{paper1}, we proved the local injectivity of the transverse ray transform of $(1,0)$-tensors and $s$-injectivity of the mixed ray transform of $(1,1)$-tensors near a strictly convex boundary point. The local results also leads to global ($s$-)injectivity results when $\partial M$ is strictly convex and $(M,g)$ admits a strictly convex function. In this paper, we will establish same results for the transverse ray transform of symmetric $(2,0)$-tensors and the mixed ray transform of symmetric $(2,2)$-tensors.\\

Let us first recall the redefinition of the mixed ray transform introduced in \cite{paper1}. For any $f\in S^k_\ell T_zM$ and $w\in T_zM$, define
\[
\Lambda_w:S^k_\ell T_zM\rightarrow S^{k}T_zM,\quad (\Lambda_wf)^{i_1\cdots i_k}=f^{i_1\cdots i_k}_{j_1\cdots j_\ell}w^{j_1}\cdots w^{j_\ell},
\]
For any $v\in T_z^*M$, denote $v^{\perp}=\{\eta\in T_zM|\langle \eta,v\rangle=0\}$.
Define also
\[
P_{w,v}:S^{k}T_zM\rightarrow S^{k}T_zM,\quad (P_{w,v}f)^{i_1\cdots i_k}=f^{m_1\cdots m_k}(p_{w,v})_{m_1}^{i_1}\cdots (p_{w,v})_{m_k}^{i_k}.
\]
where
\[
p_{w,v}:T_zM\rightarrow v^{\perp},\quad (p_{w,v}\zeta)^i=(p_{w,v})^i_j\zeta^j=\zeta^i-\frac{w^iv_j}{\langle w,v\rangle}\zeta^j.
\]
For any $z\in M$, and $\zeta\in T_zM$, we let $\gamma$ to be the geodesic such that
\begin{equation}\label{geodesic_initial}
\gamma(0)=z,\quad \dot{\gamma}(0)=\zeta.
\end{equation}
Denote $\mathcal{T}_\gamma^{s,t}$ to be parallel transport  from $\gamma(t)$ to $\gamma(s)$.
Let $\eta(t)\in T^*M$ be a covector field along $\gamma$ such that $\eta(t)$ is conormal to $\dot{\gamma}(t)$ and $\eta$ is parallel along $\gamma$. 
Take $\vartheta\in T^*_zM$ such that $\langle \vartheta,\zeta\rangle\neq 0$, and let $\vartheta(t)$ be the parallel transport of $\vartheta$ from $\gamma(0)$ to $\gamma(t)$.
As in \cite{paper1}, we can redefine the mixed ray transform as
\[
L_{k,\ell}f(z,\zeta,\vartheta)=\int\mathcal{T}_\gamma^{0,t}P_{\dot{\gamma}(t),\vartheta(t)}\Lambda_{\dot{\gamma}(t)}f(\gamma(t))\mathrm{d}t.
\]
In this article, we will consider the ($s$-)injectivity of $L_{k,\ell}$ for two cases: $(k,\ell)=(2,0)$ and $(k,\ell)=(2,2)$.
~\\

The main tool for the study is Melrose's scattering calculus \cite{melrose2020spectral}. Uhlmann and Vasy \cite{UV} first used this scheme and proved the local injectivity of $I_0$. Then the results have been extended to the longitudinal ray transforms of higher order tensors \cite{stefanov2018inverting,de2019inverting}. The method has also been used to study the boundary rigidity problem \cite{stefanov2021local} and other various geometrical inverse problems \cite{de2020recovery,zou2019partial,zhou2018local,zhou2018lens,paternain2019lens}.

For the problem to fit into the setup of scattering calculus, we recall some definitions and notations used in \cite{paper1}.
Let $(\widetilde{M},g)$ be a manifold without boundary extending $(M,g)$. So $\gamma\in M$ can be extended to a geodesic on $\widetilde{M}$. Let $\rho\in C^\infty(\widetilde{M})$ be a local boundary defining function for $M$, i.e., $\rho> 0$ in $M$, $\rho<0$ on $\widetilde{M}\setminus\overline{M}$, and $\rho=0$ on $\partial M$. We can assume $\rho$ is a smooth function on $\widetilde{M}$. For an arbitrary point $p\in \partial M$, we choose a function $\tilde{x}$ defined on a neighborhood $U$ of $p$ in $\widetilde{M}$, with $\tilde{p}=0$, $\mathrm{d}\tilde{x}(p)=-\mathrm{d}\rho(p)$, $\mathrm{d}\tilde{x}\neq 0$ on $U$, and $\tilde{x}$ has strictly concave level sets from the side of super-level sets. Let $\Omega_c=\{\tilde{x}>-c,\rho\geq 0\}$ such that $\overline{\Omega_c}$ is compact.

Let $X=\{x\geq 0\}$ with $x=\tilde{x}+c$. The $\partial X=\{x=0\}$ is the boundary of $X$. Now one can add $y^1,\cdots,y^{n-1}$ to $x$ such that $(x,y_1,\cdots,y_{n-1})$ are local coordinates on $X$. To use Melrose's scattering calculus \cite{melrose2020spectral}, we endow $X$ a scattering metic $g_{sc}$ of the form $g_{sc}=x^{-4}\mathrm{d}x^2+x^{-2}h$, where $h$ is a metric on the level sets of $x$.

Denote the space of scattering symmetric $(k,\ell)$-tensors $S^k_\ell{^{sc}T}X$ to be the subbundle of $S^k_\ell TX$, for which
\[
\begin{split}
&\underbrace{(x^2\partial_x)\otimes^s\cdots\otimes^s(x^2\partial_x)}_{k-m}\otimes^s \underbrace{(x\partial_{y^{i_1}})\otimes^s\cdots\otimes^s(x\partial_{y^{i_m}})}_{m}\\
&\quad\quad\quad\quad\otimes \underbrace{(\frac{\mathrm{d}x}{x^2})\otimes^s\cdots\otimes^s(\frac{\mathrm{d}x}{x^2})}_{\ell-p}\otimes^s \underbrace{(\frac{\mathrm{d}y^{j_1}}{x})\otimes^s\cdots\otimes^s(\frac{\mathrm{d}y^{j_p}}{x})}_{p},\quad 0\leq m\leq k, 0\leq p\leq \ell,
\end{split}
\]
is a local basis. Here $\otimes^s$ denotes the symmetric direct product. We also denote $\mathcal{B}S^k_\ell{^{sc}T}X$ to be the space of scattering trace-free symmetric $(k,\ell)$-tensors. See \cite{paper1} for more details.\\

For the transverse ray transform of symmetric $(2,0)$-tensors, we have the following local and global injectivity results.
 \begin{theorem}\label{thmlocal20}
Assume $\partial M$ is strictly convex at $p\in\partial M$. There exists a function $\tilde{x}\in C^\infty(\widetilde{M})$ with $O_p=\{\tilde{x}>-c\}\cap M$ for sufficiently small $c>0$, such that a symmetric $(2,0)$-tensor $f$ can be uniquely determined by $T_2f$ restricted to $O_p$-local geodesics.
\end{theorem}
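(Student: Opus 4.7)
The plan is to follow the Uhlmann--Vasy scattering calculus scheme, adapting the argument used for $(1,0)$-tensors in \cite{paper1} to the $(2,0)$-tensor setting. First I would set up the artificial boundary: by strict convexity of $\partial M$ at $p$, choose $\tilde{x}$ with strictly concave super-level sets as in the introduction, let $x=\tilde{x}+c$, and work on $X=\{x\geq 0\}$ equipped with a scattering metric in Melrose's sense. The $O_p$-local geodesics are those staying in $\{x>0\}$, and for small $c$ they are nearly tangent to the level sets of $x$ near $\partial X$, which is precisely the geometry suited to the scattering calculus.

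Next I would introduce the exponentially conjugated normal operator $N_F=e^{-F/x}L_{2,0}^{*}\chi\, L_{2,0}e^{-F/x}$, where $\chi$ localizes to geodesics nearly tangent to $\{x=\text{const}\}$, the adjoint is taken in a scattering $L^{2}$ pairing on $\mathcal{B}S^{2}_{0}{}^{sc}TX$, and $F>0$ is chosen large. Following the template of \cite{paper1,UV,stefanov2018inverting,de2019inverting}, one checks that after rescaling by an appropriate power of $x$, $N_F$ is a scattering pseudodifferential operator of order $(-1,0)$. The boundary principal symbol is computed as an integral over pairs (tangent direction $\hat{\zeta}$ along level sets of $x$, conormal direction $\eta$ to $\hat{\zeta}$) of the rank-four tensor $\eta_{i}\eta_{j}\eta_{k}\eta_{l}$, weighted by a Gaussian in the fiber variable coming from stationary phase in $F$.

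The central step is ellipticity of this principal symbol on the trace-free scattering symmetric $(2,0)$-bundle $\mathcal{B}S^{2}_{0}{}^{sc}TX$. The algebraic content is to show that for any trace-free symmetric $f^{ij}\neq 0$ tangent to the boundary, the positive semidefinite quadratic form $f\mapsto\int f^{ij}f^{kl}\eta_{i}\eta_{j}\eta_{k}\eta_{l}\,\mathrm{d}\mu(\hat{\zeta},\eta)$ is strictly positive, using that in dimension $n\geq 3$ the $\eta$'s range over an $(n-1)$-dimensional subspace as $\hat{\zeta}$ varies. Here the trace-free projection is crucial: the operator has an obvious kernel coming from tensors of the form $\hat{\zeta}^{i}\hat{\zeta}^{j}$ plus pure-trace directions, which is exactly the complement of $\mathcal{B}S^{2}_{0}{}^{sc}TX$ that the redefinition $P_{w,v}$ from \cite{paper1} removes. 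I expect the main obstacle to lie precisely in this symbolic positivity computation: with higher rank the cross-terms between the trace constraint and the transverse projection $P_{w,v}$ must be tracked carefully, and one has to verify that no ``hidden'' algebraic direction is overlooked.

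Once ellipticity is established, the remainder of the argument is standard for this framework: $N_F$ admits a scattering parametrix modulo a compact error which, combined with a Neumann series argument exploiting the exponential weight $e^{-F/x}$ to beat the error, yields injectivity of $L_{2,0}$ on trace-free tensors supported in a thin layer $\{0\leq x\leq c'\}$. The gauge-trivial tracial part is recovered separately since $T_{2}f$ on a single geodesic with $\eta$ varying over the conormal sphere still sees the full symmetric tensor. This produces the local determination of $f$ from $T_{2}f$ on $O_{p}$-local geodesics claimed in Theorem~\ref{thmlocal20}.
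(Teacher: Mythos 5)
Your proposed framework (scattering calculus, exponentially conjugated normal operator, ellipticity in two regimes, parametrix, Neumann series) is the correct one and coincides with the paper's. But there is a genuine conceptual error at the heart of your argument, precisely at the step you yourself flag as the crux.

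You claim that "the trace-free projection is crucial" and that the operator "has an obvious kernel coming from tensors of the form $\hat{\zeta}^i\hat{\zeta}^j$ plus pure-trace directions, which is exactly the complement of $\mathcal{B}S^2_0{}^{sc}TX$ that the redefinition $P_{w,v}$ removes." This is not the case for the transverse ray transform. Unlike the longitudinal transform $I_\ell$ (which has potential tensors in its kernel) or the mixed transform $L_{2,2}$ (which has a $\mathrm{d}^{\mathcal B}$-gauge freedom and where the paper restricts to trace-free tensors and works modulo $\ker\sigma(\delta^{\mathcal B}_{\mathsf F})$), the transverse transform $T_2 = L_{2,0}$ has no intrinsic kernel. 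The component of $f$ along $\dot\gamma$ that $P_{w,v}$ kills for one geodesic is recovered by varying the geodesic direction, and the pure-trace part is not annihilated at all: $T_2(cg)$ integrates $c|\eta|^2 = c$ along the geodesic and hence is nonzero. Consequently, the paper's Lemma~\ref{ellipticity20fiber} and the base-infinity lemma establish that $N_{\mathsf F}$ is \emph{fully elliptic on the entire bundle} $S^2{}^{sc}TX$, with no trace-free restriction and no auxiliary operator whose symbol kernel one must quotient by. Your "gauge-trivial tracial part is recovered separately" step would be a patch for a problem that does not exist; but if you in fact proved ellipticity only on $\mathcal{B}S^2_0{}^{sc}TX$, you would have shown strictly less than Theorem~\ref{thmlocal20} claims, namely recovery of only the trace-free part.

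Two smaller points. First, your conjugation $N_{\mathsf F}=e^{-\mathsf F/x}L^*\chi Le^{-\mathsf F/x}$ has the wrong sign on the right exponential; the paper uses $N_{\mathsf F}=e^{-\mathsf F/x}T_2'T_2e^{\mathsf F/x}$, which is what makes the conjugated operator fit into the scattering calculus with the desired orders. Second, the principal symbol at the front face is not simply the rank-four object $\eta_i\eta_j\eta_k\eta_l$: because the paper uses the $P_{\dot\gamma,\vartheta}$-projected tensor-valued redefinition of $T_2$ together with the nontrivial parallel transport $\mathcal{T}_\pm$ across the front face (which contributes the $(S+2\alpha|Y|)$ shear terms), the symbol is a matrix product whose positive-definiteness must be checked componentwise against the inner product $M(2,0)$, as is done in \eqref{kernel20_matrix}--\eqref{matrix20_ff}. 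Your scalarized description of the symbol is not wrong in spirit but it underestimates the algebraic bookkeeping required, and in particular would not reveal the role of the curvature parameter $\alpha$ in the base-infinity analysis.
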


\begin{theorem}\label{thmglobal20}
Assume $\partial M$ is strictly convex and $(M,g)$ admits a smooth strictly convex function. If
\[
T_2f(\gamma,\eta)=0
\]
for any geodesic $\gamma$ in $M$ with endpoints on $\partial M$, and $\eta$ a parallel vector field conormal to $\gamma$. Then $f=0$.
\end{theorem}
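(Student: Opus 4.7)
The plan is the standard layer-stripping argument, deducing the global invertibility from the local Theorem \ref{thmlocal20} via the strictly convex function $\phi$, in the manner of Uhlmann--Vasy \cite{UV} and its many extensions. I would first apply Theorem \ref{thmlocal20} at every $p \in \partial M$ to produce a neighborhood $O_p \subset M$ with $f \equiv 0$; compactness of $\partial M$ then yields an open collar $W \supset \partial M$ on which $f \equiv 0$. Since $\phi$ is strictly convex along geodesics it attains its maximum on $\partial M$, so the super-level sets $\{\phi > c\} \cap M$ for $c$ close to $c_{\max} := \max_M \phi$ lie inside $W$.

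Next I would propagate inward along the foliation of $M$ by level sets of $\phi$. Setting
\[
A = \bigl\{ c \in [\min_M \phi,\, c_{\max}] :\ f \equiv 0 \text{ on } \{\phi > c\} \cap M \bigr\},
\]
the set $A$ is nonempty and closed, and I would show $\inf A = \min_M \phi$. Assume for contradiction that $c^\star := \inf A > \min_M \phi$. At each $p \in \{\phi = c^\star\} \cap M$, strict convexity of $\phi$ gives $(\phi \circ \gamma)''(0) = \mathrm{Hess}(\phi)(\dot\gamma,\dot\gamma) > 0$ for every geodesic $\gamma$ tangent to the level set at $p$, so $\{\phi = c^\star\}$ is a strictly convex hypersurface and can play the role of the artificial ``boundary'' required by an interior variant of Theorem \ref{thmlocal20}, with the function $\tilde x$ taken to be a suitable affine modification of $c^\star - \phi$ near $p$. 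The required $T_2 f$ data on the relevant local geodesics is supplied by combining the global hypothesis (vanishing of $T_2 f$ on all geodesics from $\partial M$ to $\partial M$) with the inductive assumption ($f \equiv 0$ on $\{\phi > c^\star\}$): the contribution of any local segment to $T_2 f$ can be computed by subtracting off the contribution of its extension in the full geodesic, which lies in $\{\phi > c^\star\}$ and hence is zero. Compactness of $\{\phi = c^\star\} \cap M$ together with the uniformity of the local result (and the collar $W$, which absorbs any corner points of $\{\phi = c^\star\} \cap \partial M$) produces a definite $\epsilon > 0$ such that $f \equiv 0$ on $\{\phi > c^\star - \epsilon\}$, contradicting the definition of $c^\star$.

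Thus $c^\star = \min_M \phi$, whence $f \equiv 0$ on the dense open subset $\{\phi > \min_M \phi\}$, and by continuity $f \equiv 0$ on all of $M$. The main technical obstacle is verifying that the microlocal proof of Theorem \ref{thmlocal20}, originally formulated at the actual boundary $\partial M$, transfers to interior level sets of $\phi$ with the correct side-conventions for strict convexity and for the direction in which $O_p$ extends. This is standard in the Uhlmann--Vasy scheme and is already in place in the scattering-calculus framework developed in \cite{paper1}, as the construction there depends only on local data near a strictly convex hypersurface; the rest of the argument, being an elementary clopen/continuity argument on the parameter $c$, is essentially formal.
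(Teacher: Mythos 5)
Your proposal matches the paper's (implicit) approach: the paper defers this step entirely to \cite{paper1} and \cite{UV}, and your layer-stripping argument via the strictly convex foliation is exactly the Uhlmann--Vasy scheme being cited. One sign slip worth flagging: with the paper's conventions ($\mathrm{d}\tilde{x}(p) = -\mathrm{d}\rho(p)$, $\rho > 0$ on the interior side, and $\mathrm{Hess}\,\tilde{x} > 0$ on level-set-tangent vectors), the artificial $\tilde{x}$ near the level set $\{\phi = c^\star\}$ should be an affine modification of $\phi - c^\star$, not $c^\star - \phi$; the boundary defining function for the remaining region $\{\phi < c^\star\}$ is $\rho = c^\star - \phi$, so $\tilde{x} \approx -\rho$, and one wants $\mathrm{Hess}\,\tilde{x} = \mathrm{Hess}\,\phi > 0$, not its negative.
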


For the mixed ray transform of symmetric $(2,2)$-tensors, we only consider the three dimensional case. We refer to our previous paper \cite{paper1} for the definition of the operator $\mathrm{d}^\mathcal{B}$.

\begin{theorem}\label{thmlocal22}
Assume $n=3$ and $\partial M$ is strictly convex at $p\in\partial M$. There exists a function $\tilde{x}\in C^\infty(\widetilde{M})$ with $O_p=\{\tilde{x}>-c\}\cap M$ for sufficiently small $c>0$, such that for a given trace-free symmetric $(2,2)$-tensor $f$, there exists a trace-free symmetric $(2,1)$-tensor $v$ with $v\vert_{O_p\cap\partial M}=0$ such that $f-\mathrm{d}^\mathcal{B}v$ can be uniquely determined by $L_{2,2}f$ restricted to $O_p$-local geodesics.
\end{theorem}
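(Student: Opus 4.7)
The plan is to follow the scattering-calculus scheme of \cite{UV,stefanov2018inverting,paper1}, applied to $L_{2,2}$ on trace-free $(2,2)$-tensors. After choosing coordinates near $p$ so that $X=\{x\geq 0\}$ carries the scattering metric $g_{sc}=x^{-4}\d x^2+x^{-2}h$ described in the excerpt, I would introduce a large parameter $F>0$ and the exponentially weighted mixed ray transform $L_{2,2,F}=e^{-F/x}L_{2,2}e^{F/x}$, together with its normal operator
\[
N_{F}=L_{2,2,F}^{*}L_{2,2,F},
\]
where the adjoint is taken with respect to a Melrose-type scattering volume form on the space of $O_p$-local geodesics. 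The first step is to check that $N_F$ lies in the scattering pseudodifferential calculus acting on the bundle $\mathcal{B}S^{2}_{2}{^{sc}T}X$ of scattering trace-free symmetric $(2,2)$-tensors, with order and weight matching what was established for the $(1,1)$-case in \cite{paper1}. This requires writing the Schwartz kernel of $L_{2,2}$ in coordinates adapted to the scattering boundary and presenting $L_{2,2}^*L_{2,2}$ as an oscillatory integral.

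The central computation is the scattering principal symbol $\sigma_{sc}(N_F)$ on $\partial X$, obtained by a stationary-phase reduction of the above oscillatory integral in the transverse geodesic directions. The resulting symbol acts fiberwise on trace-free symmetric $(2,2)$-tensors and, by the same mechanism as in the lower-rank cases, has a nontrivial kernel coming from the symbolic potential tensors $\sigma(\mathrm{d}^\mathcal{B})v$. Imposing the scattering gauge condition $\deB f=0$ and working modulo this gauge, the invertibility of $N_F$ reduces to showing that $\sigma_{sc}(N_F)$ is pointwise positive, hence elliptic, on the $\deB$-solenoidal subbundle of trace-free $(2,2)$-tensors.

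The main obstacle—and the reason the theorem is stated only for $n=3$—is this ellipticity check. Under the stabilizer $SO(n-1)$ of a model scattering covariable, the fiber of trace-free symmetric $(2,2)$-tensors decomposes into a sum of $SO(n-1)$-isotypic components, and the symbol, being equivariant, is block diagonal with respect to this decomposition. One must verify block by block that the only solenoidal trace-free solutions of $\sigma_{sc}(N_F)f=0$ are zero. In dimension three the group is $SO(2)$ and the isotypic decomposition is short enough for the linear algebra to be manageable, extending the pattern of the $(1,1)$-case in \cite{paper1}; in higher dimensions the representation-theoretic bookkeeping and the number of independent trace conditions grow, which is why the present argument does not straightforwardly generalize to $n\geq 4$.

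With symbolic ellipticity on the solenoidal subbundle established, the standard scattering parametrix construction produces a Fredholm inverse of $N_F$ modulo operators of arbitrarily negative order and weight. Choosing $F$ sufficiently large and $c$ sufficiently small, the usual exponential-weight perturbation argument removes the finite-dimensional kernel and yields full invertibility on the appropriate scattering Sobolev spaces. A gauge-recovery step then produces the trace-free symmetric $(2,1)$-tensor $v$, vanishing on $O_p\cap\partial M$, such that $f-\mathrm{d}^\mathcal{B}v$ is $\deB$-solenoidal and hence uniquely determined by $L_{2,2}f$ on $O_p$-local geodesics, which is precisely the conclusion of Theorem~\ref{thmlocal22}.
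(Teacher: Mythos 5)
Your proposal tracks the paper's scheme closely at the level of architecture: exponentially weighted normal operator $N_\mathsf{F}$ in the scattering calculus acting on $\mathcal{B}S^2_2{^{sc}T}X$, ellipticity of the boundary principal symbol at fiber and base infinity \emph{restricted to the kernel of} $\sigma(\delta^\mathcal{B}_\mathsf{F})$, then a gauge step. The $SO(n-1)$-isotypic framing is a tidy heuristic for the ellipticity check, but it is not what the paper does: the paper simply expands everything in the explicit basis $\{\hat\eta,\hat\eta^\perp\}$, substitutes parametrized points $(\tilde S,\hat Y)$ of the equatorial sphere, differentiates in $\epsilon$, and grinds through the linear algebra alongside the trace-free and gauge identities. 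That is a cosmetic difference, not a gap.

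The genuine gap is in the passage from symbolic ellipticity on a subbundle to invertibility. Ellipticity of $\sigma_{sc}(N_\mathsf{F})$ on the $\delta^\mathcal{B}_\mathsf{F}$-solenoidal subspace is not full ellipticity, so one cannot invoke ``the standard scattering parametrix construction'' on $N_\mathsf{F}$ directly; one has to couple $N_\mathsf{F}$ with the gauge (e.g.\ by adding a term built from $\mathrm{d}^\mathcal{B}_\mathsf{F}$ and $\delta^\mathcal{B}_\mathsf{F}$) and one needs to decompose $f$ into solenoidal and potential parts. Both of these require the invertibility of the gauge Laplacian $\Delta^\mathcal{B}_\mathsf{F}=\delta^\mathcal{B}_\mathsf{F}\mathrm{d}^\mathcal{B}_\mathsf{F}$ on $\mathcal{B}S^2_1{^{sc}T}X$, which is not automatic and is where the paper spends essentially its entire final subsection. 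The paper proves it by exhibiting a decomposition
\[
\sigma(-\Delta^\mathcal{B}_\mathsf{F})
=\tfrac{1}{10}\sigma(\nabla^*_\mathsf{F}\nabla_\mathsf{F})
+\tfrac{1}{2}\sigma(-\mathrm{d}^\mathcal{B}_\mathsf{F}\delta^\mathcal{B}_\mathsf{F})
+\tfrac{1}{5}\sigma(\mathfrak{D}_1^*\mathfrak{D}_1)
+\sigma(\mathfrak{D}_2^*\mathfrak{D}_2)
+\sigma(\mathfrak{D}_3^*\mathfrak{D}_3)-\sigma(\mathfrak{D}_4^*\mathfrak{D}_4),
\]
for explicit first-order scattering operators $\mathfrak{D}_1,\dots,\mathfrak{D}_4$, together with the lower bound $\|\mathfrak{D}_3v\|^2-\|\mathfrak{D}_4v\|^2\geq -\|v\|_{L^2_{sc}}^2$, so that $-\Delta^\mathcal{B}_\mathsf{F}$ is coercive for $\mathsf{F}$ large. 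Without proving that invertibility, the ``gauge-recovery step'' in your last paragraph has no content, and the reduction to the solenoidal subspace in your third paragraph is unjustified. (Relatedly, the $n=3$ restriction is entangled with the specific constant in $\mathcal{B}=\mathrm{Id}-\tfrac{4}{5}\lambda\mu$ and the resulting coefficient bookkeeping in this decomposition, not just with the size of the $SO(n-1)$-isotypic decomposition.)
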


\begin{theorem}\label{thmglobal22}
Assume $n=3$ and $\partial M$ is strictly convex and $(M,g)$ admits a smooth strictly convex function. If  $f$ is a trace-free symmetric $(2,2)$-tensor and
\[
L_{2,2}f(\gamma,\eta)=0
\]
for any geodesic $\gamma$ in $M$ with endpoints on $\partial M$, and $\eta$ a parallel vector field conormal to $\gamma$. Then there exists a trace-free symmetric $(2,1)$-tensor $v$ satisfying $v\vert_{\partial M}=0$ such that
\[
f=\mathrm{d}^\mathcal{B}v.
\]
\end{theorem}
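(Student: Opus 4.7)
The plan is to upgrade Theorem~\ref{thmlocal22} to the global statement by a layer-stripping argument of Uhlmann--Vasy type along the foliation by sublevel sets of the strictly convex function, in complete parallel to how the analogous local-to-global passage was carried out in \cite{paper1} and in the longitudinal case in \cite{UV,stefanov2018inverting,de2019inverting}. The extra wrinkle compared to scalar or injective problems is the gauge freedom $v\mapsto v+w$ with $\mathrm{d}^{\mathcal{B}}w=0$: the local theorem produces potentials only up to such a gauge, and these must be patched into a single global one. This patching will be the main obstacle.

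Let $\phi$ be the strictly convex function on $M$ and set $M_c=\{\phi\geq c\}\cap M$. By strict convexity of $\phi$ and of $\partial M$, the boundary of $M_c$ is piecewise strictly convex for every $c\in[\min_M\phi,\max_M\phi]$, so the local theorem applies at each of its points. Let $\mathcal{C}$ be the set of $c$ for which there exists a trace-free symmetric $(2,1)$-tensor $v_c$ on $M_c$ with $v_c|_{\partial M\cap M_c}=0$ and $f=\mathrm{d}^{\mathcal{B}}v_c$ on $M_c$. I want to show $\mathcal{C}=[\min_M\phi,\max_M\phi]$; then $c=\min_M\phi$ supplies the desired global potential. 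Nonemptiness near the top level is immediate, since for $c$ slightly below $\max_M\phi$ the set $M_c$ sits inside the neighborhood $O_p$ of Theorem~\ref{thmlocal22} at a suitable boundary point $p$, and every boundary-to-boundary geodesic in $M_c$ is automatically an $O_p$-local geodesic. For the induction step at $c_0$, I would apply Theorem~\ref{thmlocal22} at each point of the strictly convex level $\{\phi=c_0\}\cap M$, viewed as a new boundary point of $M_{c_0-\delta}$, producing local potentials $v_p$ on neighborhoods $O_p\subset M_{c_0-\delta}$ that vanish on $O_p\cap\partial M_{c_0-\delta}$. Compactness of the level set reduces the induction step to gluing finitely many such $v_p$'s with the previously constructed $v_{c_0}$, and the closedness part of the argument amounts to the same gluing in the limit $c_k\downarrow c^\ast$.

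The heart of the argument, and the principal difficulty, is the gauge patching. Two locally produced potentials $v^{(1)},v^{(2)}$ that both give $f$ on an overlap and both vanish on the common piece of $\partial M$ differ by a trace-free symmetric $(2,1)$-tensor $w$ satisfying $\mathrm{d}^{\mathcal{B}}w=0$ with $w$ vanishing on that common boundary piece. I would establish a boundary-rigidity lemma saying that such a $w$ must vanish in a neighborhood of the common boundary portion. The strategy is the standard one: first carry out a boundary-determination step, as in \cite{paper1}, using repeated tangential differentiation of the Dirichlet condition together with $\mathrm{d}^{\mathcal{B}}w=0$ to recover all normal derivatives of $w$ on the boundary and conclude that $w$ vanishes to infinite order there; then invoke a unique-continuation or Carleman argument for the underlying (overdetermined) elliptic system to propagate the vanishing inward. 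The three-dimensional restriction enters here, as in the hypothesis of Theorem~\ref{thmlocal22}, through the algebra of the trace-free tensor bundles that makes the symbolic analysis of $\mathrm{d}^{\mathcal{B}}$ close up cleanly. With this rigidity lemma in hand, the $v_p$'s can be adjusted by kernel elements of $\mathrm{d}^{\mathcal{B}}$ so as to agree with $v_{c_0}$ on the overlaps, giving a global $v_{c_0-\delta}$ and establishing both openness and closedness of $\mathcal{C}$; a standard connectedness argument then finishes the proof.
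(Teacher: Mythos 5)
Your overall plan---layer stripping along the convex foliation, in the spirit of \cite{UV,stefanov2018inverting} and \cite{paper1}---is the strategy the paper invokes (the paper itself gives no self-contained argument, delegating entirely to ``similar to the proofs for $L_{1,1}$ in \cite{paper1}''), and you are right that the gauge freedom $v\mapsto v+w$, $\mathrm{d}^{\mathcal{B}}w=0$, is the point where a tensor statement departs from the scalar case. That part of the diagnosis is sound.

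However, the gluing step as you describe it has a genuine gap. When Theorem~\ref{thmlocal22} is applied at a point $p$ of an interior level set (treated as an artificial strictly convex boundary), the potential $v_p$ it produces vanishes on the artificial level surface through $p$, \emph{not} on $O_p\cap\partial M$. Consequently, on an overlap the difference $w=v_{c_0}-v_p$ does not vanish on any piece of $\partial M$---and it also does not vanish on the artificial level surface, since $v_{c_0}$ generically is nonzero there---so your boundary-rigidity lemma, whose hypothesis is $w=0$ on a portion of $\partial M$, never applies to it. The phrase ``the $v_p$'s can be adjusted by kernel elements of $\mathrm{d}^{\mathcal{B}}$'' is therefore doing unacknowledged work: you would have to \emph{construct} a kernel element of $\mathrm{d}^{\mathcal{B}}$ on $O_p$ with prescribed trace on $\{\phi=c_0\}$, which for an overdetermined first-order system is precisely what needs justification. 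The arguments in \cite{paper1,stefanov2018inverting} sidestep this by choosing the gauge so that the potential's boundary condition lives on the artificial level surface (or, equivalently, by tracking the solenoidal part relative to the nested domains), so that the locally produced potentials and the previously constructed one are vanishing on the \emph{same} hypersurface and the rigidity statement applies directly. Two secondary issues: (i) for the rigidity lemma itself, ``infinite-order vanishing plus Carleman'' is not the route used here, and there is no off-the-shelf Carleman estimate for a first-order overdetermined elliptic system; what is actually used is a transport/prolongation argument in the spirit of the finite-jet determination of Killing fields, contracting $\mathrm{d}^{\mathcal{B}}w=0$ with $\eta^{\otimes k}\otimes\dot\gamma^{\otimes\ell}$ for $\eta\perp\dot\gamma$; (ii) your base case (``$M_c$ sits inside a single $O_p$ for $c$ near $\max\phi$'') fails when $\phi|_{\partial M}$ is constant (a ball with $\phi=|x|^2/2$), where $M_c$ is an annular collar; the base case already needs the same finite-cover argument as the inductive step, so it cannot be dispatched as ``immediate.''
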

Note that the above results are parallel with the results in \cite{paper1}. The rest of the the paper is devoted to the proof of above theorems. We will prove Theorem \ref{thmlocal20} and Theorem \ref{thmglobal20} for $T_2$ in Section \ref{transversesection}, Theorem \ref{thmlocal22} and Theorem \ref{thmglobal22} for $L_{2,2}$ in Section \ref{mixedsection}.

\section{Transverse ray transform of $2+0$ tensors}\label{transversesection}

Assume in local coordinates $z=(x,y)$ and $(\lambda,\omega)\in T_zM$ with $x$ the boundary defining function and $\omega$ renormalized to $|\omega|=1$. Denote $\gamma_{x,y,\lambda,\omega}(t)$ to be the geodesic such that
\[
\gamma_{x,y,\lambda,\omega}(0)=(x,y),\quad \dot{\gamma}_{x,y,\lambda,\omega}(0)=\lambda\partial_x+\omega\partial_y.
\]
We will only use geodesics almost tangential to level sets of $x$.
The maps 
\[
\Gamma_+:S\widetilde{M}\times[0,\infty)\rightarrow [\widetilde{M}\times\widetilde{M};\mathrm{diag}],\quad \Gamma_+(x,y,\lambda,\omega,t)=((x,y),\gamma_{x,y,\lambda,\omega}(t))
\]
and
\[
\Gamma_-:S\widetilde{M}\times(-\infty,0]\rightarrow [\widetilde{M}\times\widetilde{M};\mathrm{diag}],\quad \Gamma_-(x,y,\lambda,\omega,t)=((x,y),\gamma_{x,y,\lambda,\omega}(t))
\]
and diffeomorphisms near $S\widetilde{M}\times\{0\}$. Here $[\widetilde{M}\times\widetilde{M};\mathrm{diag}]$ is the blow-up of $\widetilde{M}$ at the diagonal $z=z'$.
Writing the local coordinates $(x,y,x',y')$ for $\widetilde{M}\times\widetilde{M}$, in the region $|x-x'|<C|y-y'|$.
we use local coordinates
\[
x,y,X=\frac{x'-x}{x^2},Y=\frac{y'-y}{x}
\]
on Melrose's scattering double space near the lifted scattering diagonal.\\

Let $f\in S^2TX$ be a symmetric $(2,0)$-tensor, then the transverse ray transform of $f$ is defined as
\[
\begin{split}
T_2f(x,y,\lambda,\omega)
=&\int_{\mathbb{R}}\mathcal{T}_{\gamma_{x,y,\lambda,\omega}}^{0,t}P_{\dot{\gamma}_{x,y,\lambda,\omega}(t),\vartheta_{x,y,\omega}(t)}f({\gamma_{x,y,\lambda,\omega}(t)})\mathrm{d}t\\
=&\int_{\mathbb{R}}\mathcal{T}_{\gamma_{x,y,\lambda,\omega}}^{0,t} \left(\mathrm{Id}-\frac{\dot{\gamma}_{x,y,\lambda,\omega}(t)\langle\vartheta_{x,y,\omega}(t),\,\cdot\,\rangle}{\langle\dot{\gamma}_{x,y,\lambda,\omega}(t),\vartheta_{x,y,\omega}(t)\rangle}\right)^{\otimes 2}f({\gamma_{x,y,\lambda,\omega}(t)})\mathrm{d}t,
\end{split}
\]
where $\gamma_{x,y,\lambda,\omega}$ is a geodesic in $(X\cap \widetilde{M},g)$ such that
\[
\gamma_{x,y,\lambda,\omega}(0)=(x,y),\quad\dot{\gamma}_{x,y,\lambda,\omega}(0)=(\lambda,\omega),
\]
$\vartheta_{x,y,\omega}(t)$ is parallel along $\gamma_{x,y,\lambda,\omega}$ with 
\begin{equation}\label{vartheta_initial}
\vartheta=\vartheta_{x,y,\omega}(0)=(\vartheta_x(0),\vartheta_y(0))=(0,h(\omega))=h(\omega)\mathrm{d}y,
\end{equation}
and $\mathcal{T}_{\gamma_{x,y,\lambda,\omega}}^{0,t}$ is a parallel transport (w.r.t. $g$) from $\gamma_{x,y,\lambda,\omega}(t)$ to $\gamma_{x,y,\lambda,\omega}(0)$. 
Here we have used the notation $v^{\otimes 2}=v\otimes v$ for $v\in S^1_1TX$, such that componentwisely $(v^{\otimes 2})^{ij}_{mn}=v^i_m v^j_n$.

Take $\chi$ a smooth, even, non-negative, compactly supported function and define
\[
\begin{split}
T_2'v(x,y)=x^{-2}\int\left(\mathrm{Id}-\frac{\omega\partial_y\otimes g_{sc}(\lambda\partial_x+\omega\partial_y)}{h(\omega\partial_y,\omega\partial_y)}\right)^{\otimes 2}\chi(\lambda/x)v(\gamma_{x,y,\lambda,\omega}(0),\dot{\gamma}_{x,y,\lambda,\omega}(0))\mathrm{d}\lambda\mathrm{d}\omega.
\end{split}
\]

Let us examine how the parallel transport $\mathcal{T}_{\gamma_{x,y,\lambda,\omega}}^{0,t}$ acts on scattering symmetric $(2,0)$-tensors at the scattering front face $x=0$. Here we choose coordinates $(x,y)$ such that $\partial_{y^i}$ and $\partial_x$ are orthogonal, and the metric is of the form $f(x,y)\mathrm{d}x^2+k(x,y)\mathrm{d}y^2$ near $x=0$.
 Assume that $V$ is a symmetric $(2,0)$-tensor parallel along $\gamma_{x,y,\lambda,\omega}$, such that
\[
V=v^{xx}(x^2\partial_x)\otimes (x^2\partial_x)+2v^{xy^i}(x^2\partial_x)\otimes^s(x\partial_{y^i})+v^{y^iy^j}(x\partial_{y^i})\otimes^s(x\partial_{y^j}).
\]
Then with $H_{ij}(x,y)=-\frac{1}{2}g^{xx}\partial_x k_{y^iy^j}(x,y)$, we calculate
\[
\dot{V}^{xx}(0)=V^{xy^j}\dot{\gamma}^{y^k}\Gamma_{y^jy^k}^x+V^{y^jx}\dot{\gamma}^{y^k}\Gamma_{y^jy^k}^x+\mathcal{O}(x^4)=-2V^{xy^j}H_{jk}\omega^k+\mathcal{O}(x^4),
\]
\[
\dot{V}^{xy^i}(0)=V^{y^iy^k}\dot{\gamma}^{y^i}\Gamma_{y^jy^k}^x+\mathcal{O}(x^3)=-V^{y^iy^k}H_{jk}\omega^k+\mathcal{O}(x^3),
\]
\[
\dot{V}^{y^iy^j}(0)=\mathcal{O}(x^2).
\]
Therefore
\[
V^{xx}(t)=V^{xx}(0)-2H_{jk}\omega^kt+\mathcal{O}(x^5)=x^4(v^{xx}-2|Y|\langle H\hat{Y},v^{xy}\rangle)+\mathcal{O}(x^5),
\]
and similarly,
\[
V^{xy}(t)=x^3(v^{xy}-|Y|\langle H\hat{Y},v^{yy}\rangle)+\mathcal{O}(x^4),
\]
\[
V^{yy}(t)=x^2v^{yy}+\mathcal{O}(x^3).
\]
To summarize we have
\[
\left(\begin{array}{c}
v^{xx}(t)\\
v^{xy}(t)\\
v^{yy}(t)
\end{array}\right)=
\left(\begin{array}{ccc}
1 &-2|Y|\langle H\hat{Y},\,\cdot\,\rangle&0\\
0&1&-|Y|\langle H\hat{Y},\,\cdot\,\rangle\\
0&0&1
\end{array}\right)
\left(\begin{array}{c}
v^{xx}(0)\\
v^{xy}(0)\\
v^{yy}(0)
\end{array}\right)+\mathcal{O}(x).
\]
Recall that as in \cite{paper1} the coordinates need to be chosen such that $H\hat{Y}=\alpha\hat{Y}$ at the point of interest.\\

The Schwartz kernel of $N_{\mathsf{F}}=e^{-\mathsf{F}/x}T_2'T_2e^{\mathsf{F}/x}$ is then
\[
\begin{split}
&K^\flat(x,y,X,Y)\\
=&\sum_{\pm}e^{-\mathsf{F}X/(1+xX)}\chi\left(\frac{X-\alpha(x,y,x|Y|,\frac{xX}{|Y|},\hat{Y})}{|Y|}+x\tilde{\Lambda}_\pm\left(x,y,x|Y|,\frac{xX}{|Y|},\hat{Y})\right)\right)\\
&\left(\mathrm{Id}-\frac{\left((\Omega\circ\Gamma^{-1}_{\pm})x\partial_y\right)\otimes\left(x^{-1}(\Lambda\circ\Gamma^{-1}_{\pm})x^2\partial_x+(\Omega\circ\Gamma^{-1}_{\pm})\frac{h(\partial_y)}{x}\right)}{|\Omega\circ\Gamma^{-1}_{\pm}|^2h(\partial_y,\partial_y)}\right)^{\otimes 2}\mathcal{T}_\pm(x,y,X, Y)\\
&\left(\mathrm{Id}-\frac{\left(x^{-1}(\Lambda'\circ\Gamma^{-1}_{\pm})x^2\partial_x+(\Omega'\circ\Gamma^{-1}_{\pm})x\partial_y\right)\otimes\left(x^{-1}(\Xi_x'\circ\Gamma^{-1}_{\pm})\frac{\mathrm{d}x}{x^2}+(\Xi_y'\circ\Gamma^{-1}_{\pm})\frac{h(\partial_y)}{x}\right)}{x^{-2}(\Lambda'\circ\Gamma^{-1}_{\pm})(\Xi_x'\circ\Gamma_{\pm}^{-1})+(\Omega'\circ\Gamma^{-1}_{\pm})(\Xi_y'\circ\Gamma^{-1}_{\pm})h(\partial_y,\partial_y)}\right)^{\otimes 2}\\
&|Y|^{-n+1}J_\pm\left(x,y,x|Y|,\frac{xX}{|Y|},\hat{Y})\right),
\end{split}
\]
where
\[
\begin{split}
\mathcal{T}_\pm(0,y,X,Y)=&\left(\begin{array}{ccc}
1 &-2\alpha|Y|\langle\hat{Y},\,\cdot\,\rangle&0\\
0&1&-\alpha|Y|\langle\hat{Y},\,\cdot\,\rangle\\
0&0&1
\end{array}\right)^{-1}\\
=&\left(\begin{array}{ccc}
1 &0&2\alpha^2|Y|^2\langle \hat{Y}\otimes \hat{Y},\,\cdot\,\rangle \\
0&1&0\\
0&0&1
\end{array}\right)\left(\begin{array}{ccc}
1 &2\alpha|Y|\langle \hat{Y},\,\cdot\,\rangle&0\\
0&1&\alpha|Y|\langle \hat{Y},\,\cdot\,\rangle\\
0&0&1
\end{array}\right).
\end{split}
\]

We will prove that $N_{\mathsf{F}}\in\Psi_{sc}^{-1,0}(X)$ is \textit{fully elliptic} as a scattering pseudodifferential operator for $x$ sufficiently close to $0$. For this, we need to show that the principal symbol is elliptic at the \textit{fiber infinity} and at the \textit{base infinity}.

\begin{lemma}\label{ellipticity20fiber}
The operator $N_\mathsf{F}$ is elliptic at fiber infinity of $^{sc}T^*X$ in $\widetilde{M}$.
\end{lemma}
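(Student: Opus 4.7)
The plan is to extract the standard principal symbol of $N_\mathsf{F}$ from the explicit kernel $K^\flat$ and show it is invertible on the fiber of $S^2{}^{sc}TX$ at fiber infinity, i.e., as $|(\xi,\eta)|\to\infty$ for $(\xi,\eta)$ the cotangent variables dual to $(X,Y)$. Since conjugation by $e^{\pm\mathsf{F}/x}$ produces a smooth nonvanishing factor on the scattering double space, it does not affect the standard principal symbol; I would therefore set $\mathsf{F}=0$ and work at the scattering front face $x=0$, where $\mathcal{T}_\pm(0,y,X,Y)$ has the explicit upper-triangular form displayed above and the argument of $\chi$ collapses to $(X-\alpha(y,\hat Y))/|Y|$. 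The coordinate choice $H\hat Y=\alpha\hat Y$ at the reference point further simplifies $\mathcal{T}_\pm$.

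First I would Fourier transform $K^\flat|_{x=0}$ in the scattering diagonal variables $(X,Y)$. The $\chi$-factor localizes $X$ to $\alpha|Y|$, enforcing the on-geodesic constraint, while the factor $|Y|^{-n+1}$ together with a smooth tensor-valued amplitude in $\hat Y$ is a homogeneous distribution in $Y$ of order $-(n-1)$, whose Fourier transform is homogeneous of order $-1$ in $\eta$, consistent with $N_\mathsf{F}$ being a scattering pseudodifferential operator of order $-1$. Summing the $\pm$-branches, which combine constructively since $\chi$ is even, the leading principal symbol takes the form
\[
\sigma_0(N_\mathsf{F})(y,\xi,\eta)=c\int_{S^{n-2}}\delta\bigl(\hat Y\cdot\eta+\alpha(y,\hat Y)\xi\bigr)\,\mathcal A(y,\hat Y)\,d\hat Y,
\]
where $\mathcal A(y,\hat Y)$, at leading order in $|Y|$ (i.e.\ with $\mathcal{T}_\pm=\Id$), is the composition of the two projection factors $P_{\omega,\vartheta}^{\otimes 2}$ from the $T_2$ and $T_2'$ sides, with $\omega=\hat Y$.

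The main step is then to show $\sigma_0(N_\mathsf{F})$ is an injective endomorphism of $S^2{}^{sc}T_yX$. Pairing with $f$ yields a nonnegative quadratic form equal to an integral of $|P_{\omega,\vartheta}fP_{\omega,\vartheta}|^2$ over the constrained family of admissible $\omega$'s, so vanishing of the pairing forces $P_{\omega,\vartheta}fP_{\omega,\vartheta}=0$ on that whole family. Since the kernel of $g\mapsto P_{\omega,\vartheta}gP_{\omega,\vartheta}$ on $S^2T_yX$ is $\omega\otimes^s T_yX$, an intersection-of-kernels argument over the admissible family (which has positive dimension once one takes into account both the $\omega$-integration on $S^{n-2}$ and the $\lambda$-integration built into $T_2'$) forces $f=0$, giving invertibility.

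The main obstacle is handling the lower-order corrections from $\mathcal{T}_\pm$, which deviates from $\Id$ by terms of order $|Y|$ and $|Y|^2$ with the explicit upper-triangular structure above, together with the asymmetry between the two projection factors appearing in $K^\flat$. I would treat these as perturbations of the clean $\mathcal{T}_\pm=\Id$ leading symbol, verifying via the explicit formula that they either contribute to genuinely lower-order symbols or preserve the sign structure of the leading quadratic form (using that the diagonal of the upper-triangular matrix is unity), and then apply the intersection argument to conclude ellipticity. Particular care is needed in the low-dimensional case $n=3$, where the constraint leaves only a one-parameter family of admissible $(\lambda,\omega)$; there the triviality of $\bigcap_{(\lambda,\omega)}\bigl((\lambda\partial_x+\omega\partial_y)\otimes^s T_yX\bigr)$ inside the six-dimensional fiber of $S^2T$ is checked by a direct infinitesimal (derivative in $(\lambda,\omega)$) argument.
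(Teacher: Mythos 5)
Your overall strategy—compute the fiber‑infinity symbol from the restriction of $K^\flat$ to the lifted diagonal, exhibit it as a nonnegative quadratic form, and then show the intersection of the kernels of the projections over the admissible family is trivial—is the same as the paper's, and you correctly identify the two obstructions ($\mathcal T_\pm$ corrections; the low‑dimensional $n=3$ case). However, your explicit formula for the symbol is wrong in a way that breaks the rest of the argument. The cutoff $\chi$ is a smooth function, not a delta; it does not localize $X$ to $\alpha|Y|$. The delta in the fiber‑infinity symbol comes from the oscillatory $r=|Y|$ integral, which (after summing $\pm$) produces $\delta\bigl(\tilde S\,\xi+\hat Y\cdot\eta\bigr)$ with $\tilde S=X/|Y|$ a \emph{free} variable, integrated together with $\hat Y$. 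The correct symbol is
\[
\sigma_p(N_\mathsf F)(y,\xi,\eta)\;=\;\int_{\{\tilde S\xi+\hat Y\cdot\eta=0\}}\chi(\tilde S)\,\mathcal A(y,\tilde S,\hat Y)\,d\sigma(\tilde S,\hat Y),
\]
an integral over the $(n-2)$-dimensional equatorial sphere of $(\tilde S,\hat Y)\in\mathbb R\times S^{n-2}$, not the $(n-3)$-dimensional set you obtain by fixing $\tilde S$ (or rather replacing it with $\alpha$) and integrating only over $\hat Y$. The $\tilde S$ coordinate is precisely the normalized $x$-component of the geodesic direction $\omega$, and in your formula it is frozen; concomitantly you write $\omega=\hat Y$, dropping the $\partial_x$ component entirely. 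With $\omega$ a pure $y$-vector, the intersection $\bigcap_\omega\omega\otimes^sT_yX$ is certainly not trivial—any tensor of the form $a(x^2\partial_x)^{\otimes 2}$, for example, survives—so the concluding step of your proof would fail.

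With the corrected constraint set, the intersection argument does go through, but it is precisely the fact that $\tilde S$ varies along the admissible family that makes it work. The paper implements this concretely: in the case $\xi\neq 0$ it parametrizes the equatorial circle by $\hat Y(\epsilon)=\epsilon\hat\eta+\sqrt{1-\epsilon^2}\hat\eta^\perp$ and $\tilde S(\epsilon)=-\epsilon|\eta|/\xi$, and extracts information by taking Taylor coefficients in $\epsilon$ up to fourth order; the first‑order variation in $\tilde S$ is indispensable, e.g.\ to kill $f^{xy}$ and the mixed $f^{yy}$ components. This is the "direct infinitesimal argument'' you allude to for $n=3$, but it cannot be carried out from the symbol formula you wrote, where that degree of freedom has been lost. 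You should also note that $\mathcal A$ is not $P_{\omega,\vartheta}^{\otimes 2}$ squared; the left and right projection factors in $K^\flat$ come from $T_2'$ and $T_2$ and are mutual adjoints with respect to the weighted inner product $M(2,0)$, and positivity of the quadratic form relies on this adjoint pairing (together with $\mathcal T_\pm|_{|Y|=0}=\Id$), not on either factor alone.
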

\begin{proof}
We only need to analyze the principal symbol at $x=0$, since the Schwartz kernel $K^\flat$ is smooth in $(x,y)$ down to $x=0$.
At the front face $x=0$, the kernel of $N_\mathsf{F}$ is
\[
\begin{split}
K^\flat(0,y,X,Y)&=e^{-\mathsf{F}X}|Y|^{-n+1}\chi(S)\left(\mathrm{Id}-\left(\hat{Y}\cdot(x\partial_y)\right)\left(S\frac{\mathrm{d}x}{x^2}+\hat{Y}\cdot\frac{\mathrm{d}y}{x}\right)\right)^{\otimes 2}\\
&\mathcal{T}(0,y,X,Y)\left(\mathrm{Id}-\left((S+2\alpha |Y|)(x^2\partial_x)+\hat{Y}\cdot(x\partial_y)\right)\left(\hat{Y}\cdot\frac{\mathrm{d}y}{x}\right)\right)^{\otimes 2}.
\end{split}
\]
Note that both
\[
\left(\mathrm{Id}-\left(\hat{Y}\cdot(x\partial_y)\right)\left(S\frac{\mathrm{d}x}{x^2}+\hat{Y}\cdot\frac{\mathrm{d}y}{x}\right)\right)^{\otimes 2}
\] 
and 
\[
\left(\mathrm{Id}-\left((S+2\alpha |Y|)(x^2\partial_x)+\hat{Y}\cdot(x\partial_y)\right)\left(\hat{Y}\cdot\frac{\mathrm{d}y}{x}\right)\right)^{\otimes 2}
\]
map symmetric $(2,0)$-tensors to symmetric $(2,0)$-tensors.

We use
\[
(x^2\partial_x)\otimes (x^2\partial_x),\quad 2(x^2\partial_x)\otimes^s (x\partial_y),\quad  (x\partial_y)\otimes (x\partial_y)
\]
as a basis for scattering symmetric $(2,0)$-tensors. Here $\otimes^s$ represents the symmetric product where
\[
 (x^2\partial_x)\otimes^s (x\partial_y)=\frac{1}{2}\left( (x^2\partial_x)\otimes (x\partial_y)+ (x\partial_y)\otimes (x^2\partial_x)\right).
\]
 This means that if $f\in S^2 {^{sc}}TX$ can be written as
\[
f=f^{xx}(x^2\partial_x)\otimes (x^2\partial_x)+2f^{xy}(x^2\partial_x)\otimes^s (x\partial_y)+f^{yy}(x\partial_y)\otimes (x\partial_y),
\]
we can express $f$ in vector-form as
\[
f=\left(\begin{array}{c}
f^{xx}\\
f^{xy}\\
f^{yy}
\end{array}\right).
\]
\begin{remark}
The symmetry of the matrix in this section needs to be checked with respect to the inner product
\[
M(2,0)=\left(\begin{array}{ccc}1&0&0\\
0&2\times\mathrm{Id}&0\\
0&0&\mathrm{Id}
\end{array}\right),
\]
that is 
\[
(f,g)=f^TM(2,0)g.
\]
\end{remark}

Using this basis, the kernel of $N_\mathsf{F}$ at the front face $x=0$ can be expressed in matrix-form as
\begin{equation}\label{kernel20_matrix}
{\scriptsize
\begin{split}
e^{-\mathsf{F}X}|Y|^{-n+1}\chi(S)&\left(\begin{array}{ccc}
1 & 0 &0\\
-S\hat{Y} &\mathrm{Id}-\hat{Y}\langle\hat{Y},\,\cdot\,\rangle &0\\
S^2\hat{Y}\otimes\hat{Y} &-2S\hat{Y}\otimes^s+2S\hat{Y}\otimes\hat{Y}\langle\hat{Y},\,\cdot\,\rangle &\mathrm{Id}-2\hat{Y}\otimes^s\langle\hat{Y},\,\cdot\,\rangle+\hat{Y}\otimes\hat{Y}\langle\hat{Y}\otimes\hat{Y},\,\cdot\,\rangle
\end{array}\right)\\
&\left(\begin{array}{ccc}
1 &0&2\alpha^2|Y|^2\langle \hat{Y}\otimes \hat{Y},\,\cdot\,\rangle \\
0&\mathrm{Id}&0\\
0&0&\mathrm{Id}
\end{array}\right)\left(\begin{array}{ccc}
1 &2\alpha|Y|\langle\hat{Y},\,\cdot\,\rangle&0\\
0&\mathrm{Id}&\alpha|Y|\langle \hat{Y},\,\cdot\,\rangle\\
0&0&\mathrm{Id}
\end{array}\right)\\
&\left(\begin{array}{ccc}
1 &-2(S+2\alpha|Y|)\langle\hat{Y},\,\cdot\,\rangle & (S+2\alpha|Y|)^2\langle\hat{Y}\otimes\hat{Y},\,\cdot\,\rangle\\
0&\mathrm{Id}-\hat{Y}\langle\hat{Y},\,\cdot\,\rangle &-(S+2\alpha|Y|)\langle\hat{Y},\,\cdot\,\rangle+(S+2\alpha|Y|)\hat{Y}\langle\hat{Y}\otimes\hat{Y},\,\cdot\,\rangle\\
0&0&\mathrm{Id}-2\hat{Y}\otimes^s\langle\hat{Y},\,\cdot\,\rangle+\hat{Y}\otimes\hat{Y}\langle\hat{Y}\otimes\hat{Y},\,\cdot\,\rangle
\end{array}\right)\\
=e^{-\mathsf{F}X}|Y|^{-n+1}\chi(S)&\left(\begin{array}{ccc}
1 & 0 &0\\
-S\hat{Y} &\mathrm{Id}-\hat{Y}\langle\hat{Y},\,\cdot\,\rangle &0\\
S^2\hat{Y}\otimes\hat{Y} &-2S\hat{Y}\otimes^s+2S\hat{Y}\otimes\hat{Y}\langle\hat{Y},\,\cdot\,\rangle &\mathrm{Id}-2\hat{Y}\otimes^s\langle\hat{Y},\,\cdot\,\rangle+\hat{Y}\otimes\hat{Y}\langle\hat{Y}\otimes\hat{Y},\,\cdot\,\rangle
\end{array}\right)\\
&\left(\begin{array}{ccc}
1 &-2(S+2\alpha|Y|)\langle\hat{Y},\,\cdot\,\rangle & (S+2\alpha|Y|)^2\langle\hat{Y}\otimes\hat{Y},\,\cdot\,\rangle\\
0&\mathrm{Id}-\hat{Y}\langle\hat{Y},\,\cdot\,\rangle &-(S+2\alpha|Y|)\langle\hat{Y},\cdot\rangle+(S+2\alpha|Y|)\hat{Y}\langle\hat{Y}\otimes\hat{Y},\,\cdot\,\rangle\\
0&0&\mathrm{Id}-2\hat{Y}\otimes^s\langle\hat{Y},\,\cdot\,\rangle+\hat{Y}\otimes\hat{Y}\langle\hat{Y}\otimes\hat{Y},\,\cdot\,\rangle
\end{array}\right).
\end{split}
}
\end{equation}
Here $\otimes^s$ denotes the symmetrized product.

The principal symbol $\sigma_p(N_\mathsf{F})(0,y,\xi,\eta)$ of $K^\flat(0,y,X,Y)$ at fiber infinity can be obtained by integrating the restriction of the Schwartz kernel to the front face of the lifted diagonal, $|Y|=0$, after removing the singular factor $|Y|^{-n+1}$, along the $(\xi,\eta)$-equatorial sphere $\frac{X}{|Y|}\xi+\hat{Y}\cdot\eta=0$ (so $X=0$). See the proof of \cite[Lemma 3.4]{stefanov2018inverting} for more details.
So we need to integrate the matrix
\begin{equation}\label{matrix20_ff}
\begin{split}
\chi(\tilde{S})&\left(\begin{array}{ccc}
1 & 0 &0\\
-\tilde{S}\hat{Y} &\mathrm{Id}-\hat{Y}\langle\hat{Y},\,\cdot\,\rangle &0\\
\tilde{S}^2\hat{Y}\otimes\hat{Y} &-2\tilde{S}\hat{Y}\otimes^s+2\tilde{S}\hat{Y}\otimes\hat{Y}\langle\hat{Y},\,\cdot\,\rangle &\mathrm{Id}-2\hat{Y}\otimes^s\langle\hat{Y},\,\cdot\,\rangle+\hat{Y}\otimes\hat{Y}\langle\hat{Y}\otimes\hat{Y},\,\cdot\,\rangle
\end{array}\right)\\
&\left(\begin{array}{ccc}
1 &-2\tilde{S}\langle\hat{Y},\,\cdot\,\rangle & \tilde{S}^2\langle\hat{Y}\otimes\hat{Y},\,\cdot\,\rangle\\
0&\mathrm{Id}-\hat{Y}\langle\hat{Y},\,\cdot\,\rangle &-\tilde{S}\langle\hat{Y},\,\cdot\,\rangle+\tilde{S}\hat{Y}\langle\hat{Y}\otimes\hat{Y},\,\cdot\,\rangle\\
0&0&\mathrm{Id}-2\hat{Y}\otimes^s\langle\hat{Y},\,\cdot\,\rangle+\hat{Y}\otimes\hat{Y}\langle\hat{Y}\otimes\hat{Y},\,\cdot\,\rangle
\end{array}\right)
\end{split}
\end{equation}
with $(\tilde{S},\hat{Y})$ running through the $(\xi,\eta)$-equatorial sphere $\xi\tilde{S}+\eta\cdot\hat{Y}=0$ with $\tilde{S}=\frac{X}{|Y|}$. Note that the above matrix is positive semidefinite (w.r.t. inner product $M(2,0)$), and we need to show that its integral is nonsingular.\\

Now assume that $f$ is in the kernel of \eqref{matrix20_ff}
for $\tilde{S}$ sufficiently close to $0$ such that $\chi(\tilde{S})>0$.
 Then it is in the kernel of the matrix
\[
\left(\begin{array}{ccc}
1 &-2\tilde{S}\langle\hat{Y},\cdot\rangle & \tilde{S}^2\langle\hat{Y}\otimes\hat{Y},\cdot\rangle\\
0&\mathrm{Id}-\hat{Y}\langle\hat{Y},\cdot\rangle &-\tilde{S}\langle\hat{Y},\cdot\rangle+\tilde{S}\hat{Y}\langle\hat{Y}\otimes\hat{Y},\cdot\rangle\\
0&0&\mathrm{Id}-2\hat{Y}\otimes^s\langle\hat{Y},\cdot\rangle+\hat{Y}\otimes\hat{Y}\langle\hat{Y}\otimes\hat{Y},\cdot\rangle
\end{array}\right).
\]
First take $\tilde{S}=0$ and $\hat{Y}\cdot\eta=0$, we have
\[
f^{xx}=0,\quad f^{xy}-\langle\hat{\eta}^\perp,f^{xy}\rangle\hat{\eta}^\perp=0,\quad f^{yy}-2\langle\hat{\eta}^\perp,f^{yy}\rangle\otimes^s\hat{\eta}^\perp+\langle\hat{\eta}^\perp\otimes\hat{\eta}^\perp,f^{yy}\rangle \hat{\eta}^\perp\otimes\hat{\eta}^\perp=0.
\]
for any unit vector $\hat{\eta}^\perp\perp\eta$. If $\eta=0$, we can conclude that the above identities hold for any unit vector $\hat{\eta}^\perp$. Then we can already conclude that $f^{xx}=f^{xy}=f^{yy}=0$.\\

So in the following we assume that $\eta\neq 0$. First we have
\begin{equation}\label{eq_fyy}
\langle \hat{\eta},f^{xy}\rangle=\langle\hat{\eta}\otimes\hat{\eta},f^{yy}\rangle=0,
\end{equation}
where $\hat{\eta}=\frac{\eta}{|\eta|}$.
 Next we consider the following equations:
 \begin{equation}\label{eq53}
 \begin{split}
 f^{xy}-\hat{Y}\langle\hat{Y},f^{xy}\rangle-\tilde{S}\langle\hat{Y},f^{yy}\rangle+\tilde{S}\hat{Y}\langle\hat{Y}\otimes\hat{Y},f^{yy}\rangle=0,\\
 f^{xx}-2\tilde{S}\langle\hat{Y},f^{xy}\rangle+ \tilde{S}^2\langle\hat{Y}\otimes\hat{Y},f^{yy}\rangle=0.
 \end{split}
 \end{equation}

\textit{Case 1:}  If $\xi\neq 0$, we take $\hat{Y}=\epsilon\hat{\eta}+\sqrt{1-\epsilon^2}\hat{\eta}^\perp$ and $\tilde{S}=-\frac{\epsilon|\eta|}{\xi}$.
Then we have from the second equation in \eqref{eq53},
 \[
 \begin{split}
f^{xx}+2\epsilon^2\frac{|\eta|}{\xi}\langle\hat{\eta},f^{xy}\rangle+2\epsilon\sqrt{1-\epsilon^2}\frac{|\eta|}{\xi}\langle\hat{\eta}^\perp,f^{xy}\rangle+\epsilon^4\frac{|\eta|^2}{\xi^2}\langle\hat{\eta}\otimes\hat{\eta},f^{yy}\rangle&\\
+2\epsilon^3\sqrt{1-\epsilon^2}\frac{|\eta|^2}{\xi^2}\langle\hat{\eta}\otimes\hat{\eta}^\perp,f^{yy}\rangle+\epsilon^2(1-\epsilon^2)\frac{|\eta|^2}{\xi^2}\langle\hat{\eta}^\perp\otimes\hat{\eta}^\perp,f^{yy}\rangle&=0.
\end{split}
 \]
 So
 \[
2\epsilon\sqrt{1-\epsilon^2}\frac{|\eta|}{\xi}\langle\hat{\eta}^\perp,f^{xy}\rangle
+2\epsilon^3\sqrt{1-\epsilon^2}\frac{|\eta|^2}{\xi^2}\langle\hat{\eta}\otimes\hat{\eta}^\perp,f^{yy}\rangle+\epsilon^2(1-\epsilon^2)\frac{|\eta|^2}{\xi^2}\langle\hat{\eta}^\perp\otimes\hat{\eta}^\perp,f^{yy}\rangle=0.
 \]
 Taking 1st order derivative in $\epsilon$ at $\epsilon=0$, we have
 \[
 \langle\hat{\eta}^\perp,f^{xy}\rangle=0.
 \]
 Taking 3rd and 4th order derivative yields
 \[
\langle\hat{\eta}\otimes\hat{\eta}^\perp,f^{yy}\rangle=\langle\hat{\eta}^\perp\otimes\hat{\eta}^\perp,f^{yy}\rangle=0.
 \]
 Now we can conclude that $f=0$.\\
 
 \textit{Case 2:} If $\xi= 0$, we take $\hat{Y}\perp\hat{\eta}$ and $\tilde{S}$ to be an arbitrary small number, and obtain
 \[
 \langle\hat{\eta}^\perp, f^{xy}\rangle=\langle\hat{\eta}^\perp\otimes \hat{\eta}^\perp,f^{yy}\rangle=0.
 \]
 Then we have from the first equation in \eqref{eq53}
 \[
 \langle\hat{\eta}^\perp,f^{yy}\rangle=0.
 \]
 So we can also conclude that $f=0$.
 
Therefore the integral of the matrix \eqref{matrix20_ff} along the equatorial sphere is positive definite. This proves that the boundary principal symbol of $N_\mathsf{F}$ is elliptic at fiber infinity.
 \end{proof}

 \begin{lemma}
The operator $N_\mathsf{F}$ is elliptic at base infinity of $^{sc}T^*X$ in $\widetilde{M}$ for $\mathsf{F}$ sufficiently large.
\end{lemma}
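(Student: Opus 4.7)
The plan mirrors Lemma \ref{ellipticity20fiber}, but now the scattering covariables $(\xi,\eta)$ range over all of $\mathbb{R}^n$ rather than a cosphere, and I must establish ellipticity of the full boundary principal symbol at base infinity---namely the partial Fourier transform of $K^\flat(0,y,X,Y)$ in $(X,Y)$.

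First I will pass to the polar-type variables $S=X/|Y|$, $\hat{Y}=Y/|Y|$, $r=|Y|$. The Jacobian $r^{n-1}$ exactly cancels the $|Y|^{-n+1}$ factor in $K^\flat$, and the exponent becomes linear in $r$:
\[
-iX\xi-iY\cdot\eta-\mathsf{F}X=-r\bigl[(\mathsf{F}+i\xi)S+i\eta\cdot\hat{Y}\bigr].
\]
Performing the $r$-integration on $(0,\infty)$ and combining the two $\pm$ branches (which account for forward and backward geodesics and effectively extend the $r$-range to $\mathbb{R}$) converts the Fourier integral into
\[
\sigma_{bp}(N_\mathsf{F})(0,y,\xi,\eta)=\int_{S^{n-2}}\int_{\mathbb{R}}K_\mathsf{F}(S,\hat{Y},\xi,\eta)\,M(y,S,\hat{Y})\,\chi(S)\,\d S\,\d\hat{Y},
\]
where $M(y,S,\hat{Y})$ is the positive semidefinite matrix appearing in \eqref{matrix20_ff} (with $\tilde{S}$ replaced by $S$) and $K_\mathsf{F}$ is a scalar Poisson-type kernel that is strictly positive on $\supp\chi$ for $\mathsf{F}>0$. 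Thus $\sigma_{bp}$ is already manifestly positive semidefinite.

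To prove strict positivity, I let $f$ lie in the kernel of $\sigma_{bp}$. Since the integrand is a positive scalar times a positive semidefinite matrix, $f$ must be annihilated by $M(y,S,\hat{Y})$ for every $(S,\hat{Y})$ in a neighborhood of $S=0$ times all of $S^{n-2}$. This range is strictly larger than the equatorial sphere $\xi\tilde{S}+\eta\cdot\hat{Y}=0$ used in Lemma \ref{ellipticity20fiber}, so the same algebraic analysis---Case~1 with $S$ free and Taylor expansion in the $\epsilon$-parameter, Case~2 with $S=0$ and $\hat{Y}\perp\hat\eta$---goes through and forces $f^{xx}=f^{xy}=f^{yy}=0$.

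The main obstacle is the presence of the smooth $\mathcal{O}(x)$ corrections to $K^\flat$ which contribute to the symbol at base infinity (unlike at fiber infinity, where they are killed by homogeneity). These perturbations must be dominated by the leading term uniformly in $(\xi,\eta)\in\mathbb{R}^n$, and this is precisely where $\mathsf{F}$ large is needed: the damping factor $e^{-\mathsf{F}X}$ generated by the conjugation $e^{-\mathsf{F}/x}T_2'T_2e^{\mathsf{F}/x}$ forces the kernel to concentrate near $X=0$ on scale $1/\mathsf{F}$, rendering the subprincipal errors negligible once $\mathsf{F}$ exceeds a threshold depending on finitely many derivatives of the scattering metric near $p$. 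Combined with Lemma \ref{ellipticity20fiber}, this yields full ellipticity of $N_\mathsf{F}\in\Psi_{sc}^{-1,0}(X)$ in a neighborhood of $x=0$.
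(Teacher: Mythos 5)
The central step of your proposal does not hold up. After passing to polar variables and integrating out $r$, the resulting scalar factor is not positive: for a branch like $\int_0^\infty e^{-r[(\mathsf{F}+i\xi)S+i\eta\cdot\hat Y]}\,\mathrm{d}r = [(\mathsf{F}+i\xi)S+i\eta\cdot\hat Y]^{-1}$ (and the analogous term for the other branch), the quantity is genuinely complex and sign-indefinite, so the integrand is not ``a positive scalar times a positive semidefinite matrix.'' Consequently, a vector in the kernel of $\sigma_{bp}(N_\mathsf{F})$ is not forced to be annihilated by $M(y,S,\hat Y)$ pointwise for all $(S,\hat Y)$ near $S=0$, and your reduction to the algebra of Lemma~\ref{ellipticity20fiber} collapses. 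Positivity of the boundary symbol at base infinity is a real analytic fact that requires computation, not the soft argument you give.

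The paper handles this very differently. It fixes a Gaussian cutoff $\chi(s)=e^{-s^2/(2\nu(\hat Y))}$ with $\nu=\mathsf{F}^{-1}\alpha$, treats $h=\mathsf{F}^{-1}$ as a semiclassical parameter, and computes the $(X,Y)$-Fourier transform explicitly. What comes out is not the matrix \eqref{matrix20_ff} with $\tilde S$ free; it is the \emph{different} matrix $\mathcal{M}_{20}$ whose entries carry the complex factors $(\xi_\mathsf{F}\pm\mathrm{i})\frac{\hat Y\cdot\eta_\mathsf{F}}{\xi_\mathsf{F}^2+1}$ in place of $\tilde S$, together with a genuinely positive Gaussian weight $e^{-(\hat Y\cdot\eta_\mathsf{F})^2/(2h\phi_\mathsf{F})}$. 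Positive semidefiniteness comes from $\mathcal{M}_{20}=A^*A$, and ellipticity is then checked by the same $\hat Y=\epsilon\hat\eta_\mathsf{F}+\sqrt{1-\epsilon^2}\hat\eta_\mathsf{F}^\perp$ Taylor-expansion argument as before, but run on $\mathcal{M}_{20}$, not on \eqref{matrix20_ff}. Your final paragraph gestures at the semiclassical role of large $\mathsf{F}$, which is in the right spirit, but the route to the explicit symbol, the replacement $\tilde S\rightsquigarrow(\xi_\mathsf{F}\pm\mathrm{i})\frac{\hat Y\cdot\eta_\mathsf{F}}{\xi_\mathsf{F}^2+1}$, and the source of the positivity are all missing.
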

\begin{proof}
To get the principal symbol at finite points we need to calculate the $(X,Y)$-Fourier transform of the matrix \eqref{kernel20_matrix}. As in \cite{stefanov2018inverting}, we can take $\chi(s)=e^{-s^2/(2\nu(\hat{Y}))}$ to do the actual computation. 

Taking $\nu=\mathsf{F}^{-1}\alpha$ with $h=\mathsf{F}^{-1}$ considered as a semiclassical parameter, and doing tedious calculation as in \cite{paper1}, we obtain the (semiclassical) principal symbol of $N_\mathsf{F}$ at the scattering front face $x=0$:
\begin{equation}\label{integral_ff_matrix20}
\int_{\mathbb{S}^{n-2}} (\xi_\mathsf{F}^2+1)^{-1/2}\mathcal{M}_{20}e^{-(\hat{Y}\cdot\eta_\mathsf{F})^2/(2h\phi_\mathsf{F}(\xi_\mathsf{F},\hat{Y}))}\mathrm{d}\hat{Y},
\end{equation}
where
 \[
 {\scriptsize
 \begin{split}
\mathcal{M}_{20}=\left(\begin{array}{ccc}
1 & 0 &\\
2(\xi_\mathsf{F}+\mathrm{i})\frac{\hat{Y}\cdot\eta_\mathsf{F}}{\xi_\mathsf{F}^2+1}\hat{Y}&I-\hat{Y}\langle\hat{Y},\,\cdot\,\rangle& 0\\
(\xi_\mathsf{F}+\mathrm{i})^2\left(\frac{\hat{Y}\cdot\eta_\mathsf{F}}{\xi_\mathsf{F}^2+1}\right)^2\hat{Y}\otimes\hat{Y}&(\xi_\mathsf{F}+\mathrm{i})\frac{\hat{Y}\cdot\eta_\mathsf{F}}{\xi_\mathsf{F}^2+1}\left(\hat{Y}\otimes^s-\hat{Y}\otimes\hat{Y}\langle\hat{Y},\,\cdot\,\rangle\right)& I-2\hat{Y}\otimes^s\langle\hat{Y},\,\cdot\,\rangle+\hat{Y}\otimes\hat{Y}\langle\hat{Y}\otimes\hat{Y},\,\cdot\,\rangle
\end{array}
\right)\\
\left(\begin{array}{ccc}
1 & 2(\xi_\mathsf{F}-\mathrm{i})\frac{\hat{Y}\cdot\eta_\mathsf{F}}{\xi_\mathsf{F}^2+1}\langle\hat{Y},\,\cdot\,\rangle &(\xi_\mathsf{F}-\mathrm{i})^2\left(\frac{\hat{Y}\cdot\eta_\mathsf{F}}{\xi_\mathsf{F}^2+1}\right)^2\langle\hat{Y}\otimes\hat{Y},\,\cdot\,\rangle\\
0&I-\hat{Y}\langle\hat{Y},\,\cdot\,\rangle& (\xi_\mathsf{F}-\mathrm{i})\frac{\hat{Y}\cdot\eta_\mathsf{F}}{\xi_\mathsf{F}^2+1}\left(\langle\hat{Y},\,\cdot\,\rangle-\hat{Y}\langle\hat{Y}\otimes\hat{Y},\,\cdot\,\rangle\right)\\
0&0& I-2\hat{Y}\otimes^s\langle\hat{Y},\,\cdot\,\rangle+\hat{Y}\otimes\hat{Y}\langle\hat{Y}\otimes\hat{Y},\,\cdot\,\rangle
\end{array}
\right).
\end{split}
}
\]
Here $(\xi_\mathsf{F},\eta_\mathsf{F})=h(\xi,\eta)$, where $(\xi,\eta)$ is the Fourier dual variables of $(X,Y)$.
Assume $f$ is in the kernel of above matrix.
Take a unit vector $\hat{Y}=\hat{\eta}^\perp_\mathsf{F}$ perpendicular to $\eta_\mathsf{F}$, then the above matrix becomes
 \[
\left(\begin{array}{ccc}
1 & 0 & 0\\
0 & I-\hat{\eta}^\perp_\mathsf{F}\langle\hat{\eta}^\perp_\mathsf{F},\,\cdot\,\rangle &0\\
0 &0&I-2\hat{\eta}^\perp_\mathsf{F}\otimes^s\langle\hat{\eta}^\perp_\mathsf{F},\,\cdot\,\rangle+\hat{\eta}^\perp_\mathsf{F}\otimes\hat{\eta}^\perp_\mathsf{F}\langle\hat{\eta}^\perp_\mathsf{F}\otimes\hat{\eta}^\perp_\mathsf{F},\,\cdot\,\rangle
\end{array}
\right).
\]
If $\eta_\mathsf{F}=0$, then $\hat{\eta}^\perp_\mathsf{F}$ can be any vector and we can conclude $f=0$.
So now we assume $\eta_\mathsf{F}\neq 0$ and get
\[
f^{xx}=0,\quad\langle\hat{\eta}_\mathsf{F},f^{xy}\rangle=0,\quad\langle \hat{\eta}_\mathsf{F}\otimes  \hat{\eta}_\mathsf{F},f^{yy}\rangle=0,
\]
 Taking $\hat{Y}=\epsilon\hat{\eta}_\mathsf{F}+\sqrt{1-\epsilon^2}\hat{\eta}_\mathsf{F}^\perp$, we have
 \[
 \begin{split}
 f^{xx}+2\epsilon^2(\xi_\mathsf{F}-\mathrm{i})\frac{|\eta_\mathsf{F}|}{\xi_\mathsf{F}^2+1}\langle\hat{\eta}_\mathsf{F},f^{xy}\rangle+2\epsilon\sqrt{1-\epsilon^2}(\xi_\mathsf{F}-\mathrm{i})\frac{|\eta_\mathsf{F}|}{\xi_\mathsf{F}^2+1}\langle\hat{\eta}^\perp_\mathsf{F},f^{xy}\rangle\\
 +\epsilon^4(\xi_\mathsf{F}-\mathrm{i})^2\left(\frac{|\eta_\mathsf{F}|}{\xi_\mathsf{F}^2+1}\right)^2\langle\hat{\eta}_\mathsf{F}\otimes \hat{\eta}_\mathsf{F},f^{yy}\rangle+2\epsilon^3\sqrt{1-\epsilon^2}(\xi_\mathsf{F}-\mathrm{i})^2\left(\frac{|\eta_\mathsf{F}|}{\xi_\mathsf{F}^2+1}\right)^2\langle\hat{\eta}_\mathsf{F}\otimes \hat{\eta}_\mathsf{F}^\perp,f^{yy}\rangle\\
 +\epsilon^2(1-\epsilon^2)(\xi_\mathsf{F}-\mathrm{i})^2\left(\frac{|\eta_\mathsf{F}|}{\xi_\mathsf{F}^2+1}\right)^2\langle\hat{\eta}_\mathsf{F}^\perp\otimes \hat{\eta}_\mathsf{F}^\perp,f^{yy}\rangle=0,
 \end{split}
 \]
 which simplifies to
  \[
 \begin{split}
2\epsilon\sqrt{1-\epsilon^2}(\xi_\mathsf{F}-\mathrm{i})\frac{|\eta_\mathsf{F}|}{\xi_\mathsf{F}^2+1}\langle\hat{\eta}^\perp_\mathsf{F},f^{xy}\rangle
 +2\epsilon^3\sqrt{1-\epsilon^2}(\xi_\mathsf{F}-\mathrm{i})^2\left(\frac{|\eta_\mathsf{F}|}{\xi_\mathsf{F}^2+1}\right)^2\langle\hat{\eta}_\mathsf{F}\otimes \hat{\eta}_\mathsf{F}^\perp,f^{yy}\rangle\\
 +\epsilon^2(1-\epsilon^2)(\xi_\mathsf{F}-\mathrm{i})^2\left(\frac{|\eta_\mathsf{F}|}{\xi_\mathsf{F}^2+1}\right)^2\langle\hat{\eta}_\mathsf{F}^\perp\otimes \hat{\eta}_\mathsf{F}^\perp,f^{yy}\rangle=0.
 \end{split}
 \]
Taking 1st order derivative in $\epsilon$ at $\epsilon=0$, we have
 \[
 \langle\hat{\eta}_\mathsf{F}^\perp,f^{xy}\rangle=0.
 \]
 Taking 3rd and 4th order derivative yields
 \[
\langle\hat{\eta}_\mathsf{F}\otimes\hat{\eta}_\mathsf{F}^\perp,f^{yy}\rangle=\langle\hat{\eta}_\mathsf{F}^\perp\otimes\hat{\eta}_\mathsf{F}^\perp,f^{yy}\rangle=0.
 \]
 Now we can conclude that $f=0$. This shows that the integral \eqref{integral_ff_matrix20} is a positive definite matrix, meaning that the principal symbol of $N_\mathsf{F}$ is elliptic at base infinity.
 \end{proof}
  
  As in \cite{paper1}, we can show the local and global injectivity results stated in Theorem \ref{thmlocal20} and  \ref{thmglobal20}. See \cite{UV} for more details.

\section{Mixed ray transform of $2+2$ tensors}\label{mixedsection}
The local mixed ray transform of a trace-free symmetric $(2,2)$-tensor is defined by
\[
L_{2,2}f(x,y,\lambda,\omega)=\int_{\mathbb{R}}\mathcal{T}_{\gamma_{x,y,\lambda,\omega}}^{0,t}P_{\dot{\gamma}_{x,y,\lambda,\omega}(t),\vartheta_{x,y,\omega}(t)}\Lambda_{\dot{\gamma}_{x,y,\lambda,\omega}(t)}f^{sc}({\gamma_{x,y,\lambda,\omega}(t)})\mathrm{d}t,
\]
for $f\in\mathcal{B}S^2_2TX$. 
Then we define
\[
L_{22}'v(x,y)=x^2\int\chi(\lambda/x)\left(\left(\mathrm{Id}-\frac{\omega\partial_y\otimes g_{sc}(\lambda\partial_x+\omega \partial_y)}{h(\omega\partial_y,\omega\partial_y)}\right)^{\otimes 2}v(x,y,\lambda,\omega)\right)\otimes g_{sc}(\lambda\partial_x+\omega\partial_y)^{\otimes 2}\mathrm{d}\lambda\mathrm{d}\omega,
\]
and
\[
N_\mathsf{F}=e^{-\mathsf{F}/x}L_{22}'L_{2,2}e^{\mathsf{F}/x}.
\]

Then one can see that
\[
N_\mathsf{F}\in \Psi_{sc}^{-1,0}(X;\mathcal{B}S_2^2{^{sc}T}X,\mathcal{B}S_2^2{^{sc}T}X).
\]
At the front face $x=0$,  the kernel of $N_\mathsf{F}$ is
\[
\begin{split}
&K^\flat(0,y,X,Y)\\
=&e^{-\mathsf{F}X}|Y|^{-n+1}\chi(S)\left(\mathrm{Id}-\left(\hat{Y}\cdot(x\partial_y)\right)\left(S\frac{\mathrm{d}x}{x^2}+\hat{Y}\cdot\frac{\mathrm{d}y}{x}\right)\right)^{\otimes 2}\otimes \left(S\frac{\mathrm{d}x}{x^2}+\hat{Y}\cdot\frac{\mathrm{d}y}{x}\right)^{\otimes 2}\mathcal{T}(0,y,X,Y)\\
&\left(\left(\mathrm{Id}-\left((S+2\alpha |Y|)(x^2\partial_x)+\hat{Y}\cdot(x\partial_y)\right)\left(\hat{Y}\cdot\frac{\mathrm{d}y}{x}\right)\right)^{\otimes 2}\otimes\left((S+2\alpha |Y|)(x^2\partial_x)+\hat{Y}\cdot(x\partial_y)\right)^{\otimes 2}\right).
\end{split}
\]
Notice that
\[
\left(\mathrm{Id}-\left((S+2\alpha |Y|)(x^2\partial_x)+\hat{Y}\cdot(x\partial_y)\right)\left(\hat{Y}\cdot\frac{\mathrm{d}y}{x}\right)\right)^{\otimes 2}\otimes\left((S+2\alpha |Y|)(x^2\partial_x)+\hat{Y}\cdot(x\partial_y)\right)^{\otimes 2}
\]
maps a symmetric $(2,2)$-tensor to a scattering symmetric $(2,0)$-tensor, and
\[
\left(\mathrm{Id}-\left(\hat{Y}\cdot(x\partial_y)\right)\left(S\frac{\mathrm{d}x}{x^2}+\hat{Y}\cdot\frac{\mathrm{d}y}{x}\right)\right)^{\otimes 2}\otimes \left(S\frac{\mathrm{d}x}{x^2}+\hat{Y}\cdot\frac{\mathrm{d}y}{x}\right)^{\otimes 2}
\]
maps a symmetric $(2,0)$-tensor to a scattering symmetric $(2,2)$-tensor.\\

 We use
\[
\begin{array}{rrrr}
(x^2\partial_x)\otimes (x^2\partial_x)\otimes\frac{\mathrm{d}x}{x^2}\otimes\frac{\mathrm{d}x}{x^2},&\quad 2(x^2\partial_x)\otimes (x^2\partial_x)\otimes\frac{\mathrm{d}x}{x^2}\otimes^s\frac{\mathrm{d}y}{x},&\quad (x^2\partial_x)\otimes (x^2\partial_x)\otimes\frac{\mathrm{d}y}{x}\otimes\frac{\mathrm{d}y}{x},\\
 2(x^2\partial_x)\otimes^s (x\partial_y)\otimes\frac{\mathrm{d}x}{x^2}\otimes\frac{\mathrm{d}x}{x^2},&\quad 4(x^2\partial_x)\otimes^s (x\partial_y)\otimes\frac{\mathrm{d}x}{x^2}\otimes^s\frac{\mathrm{d}y}{x},&\quad 2(x^2\partial_x)\otimes^s (x\partial_y)\otimes\frac{\mathrm{d}y}{x}\otimes\frac{\mathrm{d}y}{x}, \\
  (x\partial_y)\otimes (x\partial_y)\otimes\frac{\mathrm{d}x}{x^2}\otimes\frac{\mathrm{d}x}{x^2},&\quad  2(x\partial_y)\otimes (x\partial_y)\otimes\frac{\mathrm{d}x}{x^2}\otimes^s\frac{\mathrm{d}y}{x},&\quad  (x\partial_y)\otimes (x\partial_y)\otimes\frac{\mathrm{d}y}{x}\otimes\frac{\mathrm{d}y}{x},
  \end{array}
\]
as a basis for symmetric $(2,2)$-tensors. 
Then the symmetry of the matrix below needs to be checked with respect to the inner product
\[
\mathrm{diag}(1,2\times\mathrm{Id},\mathrm{Id},2\times\mathrm{Id},4\times\mathrm{Id},2\times\mathrm{Id},\mathrm{Id},2\times\mathrm{Id},\mathrm{Id}).
\]

Then the kernel of $N_\mathsf{F}$ at the front face $x=0$ can be written as 
 \[
 {\scriptsize
  \begin{split}
 e^{-\mathsf{F}X}\chi(S)|Y|^{-n+1}&\left(\begin{array}{ccc}
 S^2 &&\\
 S\hat{Y}_2 &&\\
\hat{Y}_2\otimes\hat{Y}_2&&\\
 & S^2 &\\
& S\hat{Y}_2&\\
& \hat{Y}_2\otimes\hat{Y}_2&\\
  &&S^2 \\
 &&S\hat{Y}_2\\
 &&\hat{Y}_2\otimes\hat{Y}_2\\
 \end{array}
 \right)\\
 &\left(\begin{array}{ccc}
1 & 0 &0\\
-S\hat{Y}_1 &\mathrm{Id}-\hat{Y}_1\langle\hat{Y}_1,\,\cdot\,\rangle &0\\
S^2\hat{Y}\otimes\hat{Y}_1 &-2S\hat{Y}_1\otimes^s+2S\hat{Y}_1\otimes\hat{Y}_1\langle\hat{Y}_1,\,\cdot\,\rangle &\mathrm{Id}-2\hat{Y}_1\otimes^s\langle\hat{Y}_1,\,\cdot\,\rangle+\hat{Y}_1\otimes\hat{Y}_1\langle\hat{Y}_1\otimes\hat{Y}_1,\,\cdot\,\rangle
\end{array}\right)\\
&\left(\begin{array}{ccc}
1 &-2(S+2\alpha|Y|)\langle\hat{Y}_1,\cdot\rangle & (S+2\alpha|Y|)^2\langle\hat{Y}_1\otimes\hat{Y}_1,\cdot\rangle\\
0&\mathrm{Id}-\hat{Y}_1\langle\hat{Y}_1,\cdot\rangle &-(S+2\alpha|Y|)\langle\hat{Y}_1,\cdot\rangle+(S+2\alpha|Y|)\hat{Y}_1\langle\hat{Y}_1\otimes\hat{Y}_1,\cdot\rangle\\
0&0&\mathrm{Id}-2\hat{Y}_1\otimes^s\langle\hat{Y}_1,\cdot\rangle+\hat{Y}_1\otimes\hat{Y}_1\langle\hat{Y}_1\otimes\hat{Y}_1,\cdot\rangle
\end{array}\right)\\
&\left(\begin{array}{ccccccccc}
 (S+2\alpha|Y|)^2 &&\\
 2(S+2\alpha|Y|)\langle\hat{Y}_2,\,\cdot\,\rangle &&\\
 \langle\hat{Y}_2\otimes\hat{Y}_2,\,\cdot\,\rangle&&\\
 & (S+2\alpha|Y|)^2 &\\
& 2(S+2\alpha|Y|)\langle\hat{Y}_2 ,\,\cdot\,\rangle&\\
& \langle\hat{Y}_2\otimes\hat{Y}_2,\,\cdot\,\rangle&\\
  &&(S+2\alpha|Y|)^2 \\
 &&2(S+2\alpha|Y|)\langle\hat{Y}_2,\,\cdot\,\rangle \\
 &&\langle\hat{Y}_2\otimes\hat{Y}_2,\,\cdot\,\rangle\\
 \end{array}
 \right)^T.
 \end{split}
 }
 \]
 Here subscripts $1$ and $2$ represent they are acting on upper (covariant) and lower (contravariant) indices respectively.\\
 
 We only focus on the case when the dimension $n=3$.
\subsection{Ellipticity}
Define
\[
\mathrm{d}'_\mathsf{F}=e^{-\mathsf{F}/x}\mathrm{d}'e^{\mathsf{F}/x}:C^\infty(\mathcal{B}S^2_1{^{sc}T}X)\rightarrow C^\infty(S^2_2{^{sc}T}X),
\]
and
\[
\mathrm{d}^\mathcal{B}_\mathsf{F}=e^{-\mathsf{F}/x}\mathrm{d}^\mathcal{B}e^{\mathsf{F}/x}=\mathcal{B}\mathrm{d}'_\mathsf{F}:C^\infty(\mathcal{B}S^2_1{^{sc}T}X)\rightarrow C^\infty(\mathcal{B}S^2_2{^{sc}T}X),
\]
Let $-\delta$ be the formal adjoint of the operator $\mathrm{d}'$ with respect to $g_{sc}$ (not $g$).
Denote
\[
\delta_\mathsf{F}^\mathcal{B}:C^\infty(\mathcal{B}S^2_2{^{sc}T}X)\rightarrow C^\infty(\mathcal{B}S^2_1{^{sc}T}X),\quad \delta_\mathsf{F}^\mathcal{B}=\delta_\mathsf{F}=e^{\mathsf{F}/x}\delta e^{-\mathsf{F}/x}.
\]

 \begin{lemma}
Assume $n=3$. The boundary symbol of $N_\mathsf{F}\in \Psi_{sc}^{-1,0}(X;\mathcal{B}S^2_2{^{sc}T}X,\mathcal{B}S^2_2{^{sc}T}X)$ is elliptic at fiber infinity of $^{sc}T^*X$ when restricted to the kernel of the principal symbol of $\delta^\mathcal{B}_\mathsf{F}$.
\end{lemma}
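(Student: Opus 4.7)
The plan is to follow the strategy of Lemma \ref{ellipticity20fiber} adapted to $(2,2)$-tensors, with the crucial extra ingredient that we are allowed to quotient by $\ker\sigma_p(\delta^\mathcal{B}_\mathsf{F})$.

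\textbf{Step 1 (reduction to a pointwise kernel condition).} I would first compute the principal symbol of $N_\mathsf{F}$ at fiber infinity at $x=0$ by integrating the restriction of the front-face Schwartz kernel to $|Y|=0$ (after dividing out $|Y|^{-n+1}$) along the equatorial sphere $\tilde{S}\xi+\hat{Y}\cdot\eta=0$, exactly as in \cite[Lemma 3.4]{stefanov2018inverting}. The large matrix displayed in the excerpt factors as $A(\tilde{S},\hat{Y})\,B(\tilde{S},\hat{Y})^T$ with respect to the block inner product $\mathrm{diag}(1,2,1,2,4,2,1,2,1)$ listed in the associated remark: the right factor $B^T$ sends a $(2,2)$-tensor to a scattering $(2,0)$-tensor by contracting its lower indices twice with $(S+2\alpha|Y|)(x^2\p_x)+\hat{Y}\cdot(x\p_y)$ and then applying the transverse projection, while $A$ is the analogous construction on the upper indices. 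Thus the integrand is positive semidefinite, and $f\in\mathcal{B}S^2_2\,{^{sc}T}X$ lies in the kernel of the integrated matrix if and only if $B(\tilde{S},\hat{Y})^T f=0$ for every $(\tilde{S},\hat{Y})$ on the equatorial sphere with $\chi(\tilde{S})>0$.

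\textbf{Step 2 (scalar identities from $B^T f=0$).} In $n=3$ the $y$-space is two-dimensional, so for any non-zero $\hat{\eta}$ there is an essentially unique unit direction $\hat{\eta}^\perp$, and $f$ has only finitely many independent scalar components. I would then split into the cases $\xi\neq 0$ and $\xi=0$, mirroring the argument of Lemma \ref{ellipticity20fiber}. In the first case, substituting $\hat{Y}=\epsilon\hat{\eta}+\sqrt{1-\epsilon^2}\hat{\eta}^\perp$ and $\tilde{S}=-\epsilon|\eta|/\xi$ and differentiating the resulting polynomial identities in $\epsilon$ at $\epsilon=0$ up to order four produces a family of contraction-vanishing relations for the components of $f$; in the second, taking $\hat{Y}\perp\hat{\eta}$ with $\tilde{S}$ small but free and feeding the resulting identities into the first row of $B^T$ yields analogous relations.

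\textbf{Step 3 (using the gauge condition).} The main obstacle, and the reason the lemma is stated modulo $\ker\sigma_p(\delta^\mathcal{B}_\mathsf{F})$, is that $\mathrm{d}^\mathcal{B}v$ is annihilated by $L_{2,2}$ at the symbolic level for any trace-free symmetric $(2,1)$-tensor $v$, so the constraints from $B^T f=0$ alone cannot possibly force $f=0$; they will leave a residual potential-type subspace. The key step is to verify that on this residual subspace the principal symbol of $\delta^\mathcal{B}_\mathsf{F}$ at fiber infinity -- essentially the trace-free projection of the contraction of the lower indices of $f$ with the scattering covector $(\xi,\eta)$ -- is nondegenerate, so that imposing $\sigma_p(\delta^\mathcal{B}_\mathsf{F})f=0$ eliminates the remaining freedom and yields $f=0$. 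I expect this matching of the residual kernel of $B^T$ with the structure of the $\delta^\mathcal{B}_\mathsf{F}$ constraint to be the most delicate part. In $n=3$, however, the tensor combinatorics collapse to a concrete finite-dimensional linear-algebra check which can be carried out directly in the $\{x^2\p_x,x\p_y\}$ scattering frame adapted to the $\{\hat{\eta},\hat{\eta}^\perp\}$ splitting of the $y$-directions, in parallel with the two cases of Step 2, establishing ellipticity of $N_\mathsf{F}$ at fiber infinity on $\ker\sigma_p(\delta^\mathcal{B}_\mathsf{F})$.
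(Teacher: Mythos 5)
Your plan follows the same route as the paper: compute the fiber-infinity boundary symbol as an integral of a positive semidefinite matrix over the $(\xi,\eta)$-equatorial sphere, observe that the integrand has a Gram-type factorization so its kernel at a single $(\tilde{S},\hat{Y})$ is the kernel of the right factor, parametrize $\hat{Y}=\epsilon\hat{\eta}+\sqrt{1-\epsilon^2}\hat{\eta}^\perp$ with $\tilde{S}=-\epsilon|\eta|/\xi$, extract contraction identities by differentiating in $\epsilon$, split into $\xi\neq 0$ and $\xi=0$, and close off using the constraint $\sigma_p(\delta^\mathcal{B}_\mathsf{F})f=0$.

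There is, however, one missing ingredient that would make the Step 3 plan fail as written. You frame the elimination as ``relations from $B^Tf=0$, then kill the residual using $\sigma_p(\delta^\mathcal{B}_\mathsf{F})f=0$.'' But those two families of constraints alone do \emph{not} close the linear algebra: the paper has to invoke, repeatedly and essentially, the \emph{trace-free} relations $\mathcal{B}f=f$ (e.g.\ $f^{xx}_{xx}+\langle\hat\eta_1\otimes\hat\eta_2,f^{xy}_{xy}\rangle+\langle\hat\eta_1^\perp\otimes\hat\eta_2^\perp,f^{xy}_{xy}\rangle=0$ and its analogues) as independent scalar equations to determine components such as $f^{xx}_{xx}$, the $f^{xy}_{xy}$ diagonal blocks, and several of the $f^{yy}_{yy}$ contractions. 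These are not consequences of the $B^T$-kernel or the $\delta^\mathcal{B}_\mathsf{F}$-kernel conditions; they are additional hypotheses coming from $f\in\mathcal{B}S^2_2\,{}^{sc}TX$. If you set up the linear system only in the frame $\{\hat\eta,\hat\eta^\perp\}$ using the $B^T$ and $\delta^\mathcal{B}_\mathsf{F}$ relations, you will find it underdetermined. You should also expect to need derivatives in $\epsilon$ through fifth (not fourth) order and to handle even and odd powers of $\epsilon$ separately, as the paper does in its Step 2. With the trace-free relations treated as active constraints alongside the gauge relations, the finite-dimensional check does close and the argument goes through.
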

 \begin{proof}
 Similar as the proof of Lemma \ref{ellipticity20fiber}, we need to show that the integral of the matrix 
 \begin{equation}\label{matrix22_fiberinfinity}
 {\scriptsize
   \begin{split}
   & \chi(\tilde{S})\left(\begin{array}{ccc}
 \tilde{S}^2 &&\\
 \tilde{S}\hat{Y}_2 &&\\
\hat{Y}_2\otimes\hat{Y}_2&&\\
 & \tilde{S}^2 &\\
& \tilde{S}\hat{Y}_2&\\
& \hat{Y}_2\otimes\hat{Y}_2&\\
  &&\tilde{S}^2 \\
 &&\tilde{S}\hat{Y}_2 \\
 &&\hat{Y}_2\otimes\hat{Y}_2\\
 \end{array}
 \right)\\
&\left(\begin{array}{ccc}
1 & 0 &0\\
-\tilde{S}\hat{Y}_1 &\mathrm{Id}-\hat{Y}_1\langle\hat{Y}_1,\,\cdot\,\rangle &0\\
\tilde{S}^2\hat{Y}_1\otimes\hat{Y}_1 &-2\tilde{S}\hat{Y}_1\otimes^s+2\tilde{S}\hat{Y}_1\otimes\hat{Y}_1\langle\hat{Y}_1,\,\cdot\,\rangle &\mathrm{Id}-2\hat{Y}_1\otimes^s\langle\hat{Y}_1,\,\cdot\,\rangle+\hat{Y}_1\otimes\hat{Y}_1\langle\hat{Y}_1\otimes\hat{Y}_1,\,\cdot\,\rangle
\end{array}\right)\\
&\left(\begin{array}{ccc}
1 &-2\tilde{S}\langle\hat{Y}_1,\cdot\rangle & \tilde{S}^2\langle\hat{Y}_1\otimes\hat{Y}_1,\cdot\rangle\\
0&\mathrm{Id}-\hat{Y}_1\langle\hat{Y}_1,\cdot\rangle &-\tilde{S}\langle\hat{Y}_1,\cdot\rangle+\tilde{S}\hat{Y}_1\langle\hat{Y}_1\otimes\hat{Y}_1,\cdot\rangle\\
0&0&\mathrm{Id}-2\hat{Y}_1\otimes^s\langle\hat{Y}_1,\cdot\rangle+\hat{Y}_1\otimes\hat{Y}_1\langle\hat{Y}_1\otimes\hat{Y}_1,\cdot\rangle
\end{array}\right)\left(\begin{array}{ccc}
 \tilde{S}^2 &&\\
 2\tilde{S}\langle\hat{Y}_2,\,\cdot\,\rangle &&\\
 \langle\hat{Y}_2\otimes\hat{Y}_2,\,\cdot\,\rangle&&\\
 & \tilde{S}^2 &\\
& 2\tilde{S}\langle\hat{Y}_2 ,\,\cdot\,\rangle&\\
& \langle\hat{Y}_2\otimes\hat{Y}_2,\,\cdot\,\rangle&\\
  &&\tilde{S}^2 \\
 &&2\tilde{S}\langle\hat{Y}_2,\,\cdot\,\rangle \\
 &&\langle\hat{Y}_2\otimes\hat{Y}_2,\,\cdot\,\rangle\\
 \end{array}
 \right)^T,
  \end{split}
  }
\end{equation}
along the equatorial sphere $\tilde{S}\xi+\hat{Y}\cdot\eta=0$ corresponding to any $\zeta=(\xi,\eta)\neq 0$, is positive definite on the kernel of the principal symbol of $\delta^\mathcal{B}_\mathsf{F}$ at fiber infinity, where $\tilde{S}=\frac{X}{|Y|}$. It is clearly positive semidefinite. 

Assume first that $f$ is in the kernel of the principal symbol of $\delta^\mathcal{B}_\mathsf{F}$ at fiber infinity, that is
\begin{equation}\label{deltakernel22}
\begin{split}
\xi f^{xx}_{xx}+\langle\eta_2,f^{xx}_{xy}\rangle=0,\\
\xi f^{xy}_{xx}+\langle\eta_2,f^{xy}_{xy}\rangle=0,\\
\xi f^{yy}_{xx}+\langle\eta_2,f^{yy}_{xy}\rangle=0,\\
\xi f^{xx}_{xy}+\langle\eta_2,f^{xx}_{yy}\rangle=0,\\
\xi f^{xy}_{xy}+\langle\eta_2,f^{xy}_{yy}\rangle=0,\\
\xi f^{yy}_{xy}+\langle\eta_2,f^{yy}_{yy}\rangle=0.
\end{split}
\end{equation}

Now assume that $f$ is in the kernel of the matrix \eqref{matrix22_fiberinfinity} for $\tilde{S}$ sufficiently close to $0$ such that $\chi(\tilde{S})=1$.
First taking $\tilde{S}=0$ and $\hat{Y}=\hat{\eta}^\perp$ perpendicular to $\eta$, we have
\begin{eqnarray}
\langle\hat{\eta}_2^\perp\otimes\hat{\eta}_2^\perp,f^{xx}_{yy}\rangle=0,\label{S0_identity_1}\\
\langle\hat{\eta}_2^\perp\otimes \hat{\eta}_2^\perp,f^{xy}_{yy}\rangle-\hat{\eta}_1^\perp\langle\hat{\eta}^\perp_1\otimes \hat{\eta}^\perp_2\otimes \hat{\eta}^\perp_2,f^{xy}_{yy}\rangle=0,\label{S0_identity_2}\\
\langle\hat{\eta}_2^\perp\otimes \hat{\eta}_2^\perp,f^{yy}_{yy}\rangle-2\hat{\eta}_1^\perp\otimes^s\langle \hat{\eta}_1^\perp\otimes\hat{\eta}^\perp_2\otimes \hat{\eta}_2^\perp,f^{yy}_{yy}\rangle+\hat{\eta}_1^\perp\otimes \hat{\eta}_1^\perp\langle \hat{\eta}_1^\perp\otimes\hat{\eta}_1^\perp\otimes \hat{\eta}_2^\perp\otimes \hat{\eta}_2^\perp,f^{yy}_{yy}\rangle=0,\label{S0_identity_3}
\end{eqnarray}
and so
\[
\begin{split}
\langle\eta_1\otimes\hat{\eta}_2^\perp\otimes \hat{\eta}_2^\perp,f^{xy}_{yy}\rangle=0,\\
\langle\eta_1\otimes\eta_1\otimes\hat{\eta}_2^\perp\otimes \hat{\eta}_2^\perp,f^{yy}_{yy}\rangle=0.
\end{split}
\]

We also have
\[
\begin{split}
\left(\begin{array}{ccc}
1 &-2\tilde{S}\langle\hat{Y}_1,\,\cdot\,\rangle & \tilde{S}^2\langle\hat{Y}_1\otimes\hat{Y}_1,\,\cdot\,\rangle\\
0&\mathrm{Id}-\hat{Y}\langle\hat{Y}_1,\,\cdot\,\rangle &-\tilde{S}\langle\hat{Y}_1,\,\cdot\,\rangle+\tilde{S}\hat{Y}_1\langle\hat{Y}_1\otimes\hat{Y}_1,\,\cdot\,\rangle\\
0&0&\mathrm{Id}-2\hat{Y}_1\otimes^s\langle\hat{Y}_1,\,\cdot\,\rangle+\hat{Y}_1\otimes\hat{Y}\langle\hat{Y}_1\otimes\hat{Y}_1,\,\cdot\,\rangle
\end{array}\right)\\
\left(\begin{array}{c}
\tilde{S}^2f^{xx}_{xx}+2\tilde{S}\langle\hat{Y}_2,f^{xx}_{xy}\rangle+\langle\hat{Y}_2\otimes\hat{Y}_2,f^{xx}_{yy}\rangle\\
\tilde{S}^2f^{xy}_{xx}+2\tilde{S}\langle\hat{Y}_2,f^{xy}_{xy}\rangle+\langle\hat{Y}_2\otimes\hat{Y}_2,f^{xy}_{yy}\rangle\\
\tilde{S}^2f^{yy}_{xx}+2\tilde{S}\langle\hat{Y}_2,f^{yy}_{xy}\rangle+\langle\hat{Y}_2\otimes\hat{Y}_2,f^{yy}_{yy}\rangle
\end{array}
\right)=0.
\end{split}
\]
Applying
\[
\left(\begin{array}{ccc}
1 & &\\
&\langle \hat{Y}^\perp_1,\cdot\rangle&\\
&&\langle \hat{Y}^\perp_1\otimes\hat{Y}^\perp_1,\,\cdot\,\rangle
\end{array}
\right)
\]
from the left side of above matrix , we obtain
\[
\begin{split}
\left(\begin{array}{ccc}
1 & -2\tilde{S}\langle\hat{Y}_1,\,\cdot\,\rangle &\tilde{S}^2\langle\hat{Y}_1\otimes\hat{Y}_1,\,\cdot\,\rangle\\
0&\langle \hat{Y}_1^\perp,\,\cdot\,\rangle& -\tilde{S}\langle\hat{Y}_1^\perp\otimes\hat{Y}_1,\,\cdot\,\rangle\\
0 & 0 &\langle \hat{Y}_1^\perp\otimes \hat{Y}_1^\perp,\,\cdot\,\rangle
\end{array}
\right)
\left(\begin{array}{c}
\tilde{S}^2f^{xx}_{xx}+2\tilde{S}\langle\hat{Y}_2,f^{xx}_{xy}\rangle+\langle\hat{Y}_2\otimes\hat{Y}_2,f^{xx}_{yy}\rangle\\
\tilde{S}^2f^{xy}_{xx}+2\tilde{S}\langle\hat{Y}_2,f^{xy}_{xy}\rangle+\langle\hat{Y}_2\otimes\hat{Y}_2,f^{xy}_{yy}\rangle\\
\tilde{S}^2f^{yy}_{xx}+2\tilde{S}\langle\hat{Y}_2,f^{yy}_{xy}\rangle+\langle\hat{Y}_2\otimes\hat{Y}_2,f^{yy}_{yy}\rangle
\end{array}
\right)=0.
\end{split}
\]
~\\
%
~\\
\textbf{Case 1: $\xi\neq 0$.} The case $\eta=0$ is simple as in previous section, so we only need to consider the case $\eta\neq 0$. Now $\{\hat{\eta},\hat{\eta}^\perp\}$ forms an orthonormal basis for $\mathbb{R}^2$.\\

\text{Step 1.} We start with the identity
\begin{equation}\label{step1eq}
\tilde{S}^2\langle \hat{Y}^\perp_1\otimes \hat{Y}^\perp_1,f^{yy}_{xx}\rangle+2\tilde{S}\langle\hat{Y}^\perp_1\otimes \hat{Y}^\perp_1\otimes\hat{Y}_2,f^{yy}_{xy}\rangle+\langle\hat{Y}_1^\perp\otimes \hat{Y}^\perp_1\otimes\hat{Y}_2\otimes\hat{Y}_2,f^{yy}_{yy}\rangle=0.
\end{equation}
Taking $\hat{Y}=\epsilon\hat{\eta}+\sqrt{1-\epsilon^2}\hat{\eta}^\perp$, $\hat{Y}^\perp=\epsilon\hat{\eta}^\perp-\sqrt{1-\epsilon^2}\hat{\eta}$ and $\tilde{S}=-\frac{\epsilon|\eta|}{\xi}$, we obtain
\begin{equation}\label{kernel_identity_22_1}
{\scriptsize
\begin{split}
\epsilon^4\frac{|\eta|^2}{\xi^2}\langle\hat{\eta}_1^\perp\otimes\hat{\eta}_1^\perp,f^{yy}_{xx}\rangle-2\epsilon^3\sqrt{1-\epsilon^2}\frac{|\eta|^2}{\xi^2}\langle\hat{\eta}_1\otimes\hat{\eta}_1^\perp,f^{yy}_{xx}\rangle+\epsilon^2(1-\epsilon^2)\frac{|\eta|^2}{\xi^2}\langle\hat{\eta}_1\otimes\hat{\eta}_1,f^{yy}_{xx}\rangle\\
-2\epsilon^4\frac{|\eta|}{\xi}\langle\hat{\eta}_1^\perp\otimes\hat{\eta}_1^\perp\otimes\hat{\eta}_2,f^{yy}_{xy}\rangle-2\epsilon^3\sqrt{1-\epsilon^2}\frac{|\eta|}{\xi}\langle\hat{\eta}_1^\perp\otimes\hat{\eta}_1^\perp\otimes\hat{\eta}_2^\perp,f^{yy}_{xy}\rangle\\
+4\epsilon^3\sqrt{1-\epsilon^2}\frac{|\eta|}{\xi}\langle\hat{\eta}_1\otimes\hat{\eta}_1^\perp\otimes\hat{\eta}_2,f^{yy}_{xy}\rangle+4\epsilon^2(1-\epsilon^2)\frac{|\eta|}{\xi}\langle\hat{\eta}_1\otimes\hat{\eta}_1^\perp\otimes\hat{\eta}_2^\perp,f^{yy}_{xy}\rangle\\
-2\epsilon^2(1-\epsilon^2)\frac{|\eta|}{\xi}\langle\hat{\eta}_1\otimes\hat{\eta}_1\otimes\hat{\eta}_2,f^{yy}_{xy}\rangle-2\epsilon\sqrt{(1-\epsilon^2)^3}\frac{|\eta|}{\xi}\langle\hat{\eta}_1\otimes\hat{\eta}_1\otimes\hat{\eta}_2^\perp,f^{yy}_{xy}\rangle\\
+\epsilon^4\langle\hat{\eta}_1^\perp\otimes\hat{\eta}_1^\perp\otimes\hat{\eta}_2\otimes{\eta}_2,f^{yy}_{yy}\rangle+2\epsilon^3\sqrt{1-\epsilon^2}\langle\hat{\eta}_1^\perp\otimes\hat{\eta}_1^\perp\otimes\hat{\eta}_2\otimes{\eta}_2^\perp,f^{yy}_{yy}\rangle+\epsilon^2(1-\epsilon^2)\langle\hat{\eta}_1^\perp\otimes\hat{\eta}_1^\perp\otimes\hat{\eta}^\perp_2\otimes{\eta}^\perp_2,f^{yy}_{yy}\rangle\\
-2\epsilon^3\sqrt{1-\epsilon^2}\langle\hat{\eta}_1\otimes\hat{\eta}^\perp_1\otimes\hat{\eta}_2\otimes{\eta}_2,f^{yy}_{yy}\rangle-4\epsilon^2(1-\epsilon^2)\langle\hat{\eta}_1\otimes\hat{\eta}^\perp_1\otimes\hat{\eta}_2\otimes{\eta}_2^\perp,f^{yy}_{yy}\rangle-2\epsilon\sqrt{(1-\epsilon^2)^3}\langle\hat{\eta}_1\otimes\hat{\eta}^\perp_1\otimes\hat{\eta}^\perp_2\otimes{\eta}^\perp_2,f^{yy}_{yy}\rangle\\
+\epsilon^2(1-\epsilon^2)\langle\hat{\eta}_1\otimes\hat{\eta}_1\otimes\hat{\eta}_2\otimes{\eta}_2,f^{yy}_{yy}\rangle+2\epsilon\sqrt{(1-\epsilon^2)^3}\langle\hat{\eta}_1\otimes\hat{\eta}_1\otimes\hat{\eta}_2\otimes{\eta}_2^\perp,f^{yy}_{yy}\rangle+(1-\epsilon^2)^2\langle\hat{\eta}_1\otimes\hat{\eta}_1\otimes\hat{\eta}^\perp_2\otimes{\eta}^\perp_2,f^{yy}_{yy}\rangle=0.
\end{split}
}
\end{equation}
Letting $\epsilon=0$, we have
\[
\langle\hat{\eta}_1\otimes\hat{\eta}_1\otimes\hat{\eta}^\perp_2\otimes{\eta}^\perp_2,f^{yy}_{yy}\rangle=0.
\]
Taking 1st order derivative of \eqref{kernel_identity_22_1} in $\epsilon$ at $\epsilon=0$, we obtain
\[
-2\frac{|\eta|}{\xi}\langle\hat{\eta}_1\otimes\hat{\eta}_1\otimes\hat{\eta}_2^\perp, f^{yy}_{xy}\rangle+2\langle\hat{\eta}_1\otimes\hat{\eta}_1\otimes\hat{\eta}_2\otimes\hat{\eta}_2^\perp,f^{yy}_{yy}\rangle-2\langle\hat{\eta}_1\otimes\hat{\eta}^\perp_1\otimes \hat{\eta}^\perp_2\otimes\hat{\eta}^\perp_2,f^{yy}_{yy}\rangle=0.
\]
Using the condition in \eqref{deltakernel22}, the above equality reduces to
\begin{equation}\label{eq_37}
\left(\frac{|\eta|^2}{\xi^2}+1\right)\langle\hat{\eta}_1\otimes\hat{\eta}_1\otimes\hat{\eta}_2\otimes\hat{\eta}_2^\perp,f^{yy}_{yy}\rangle-\langle\hat{\eta}_1\otimes\hat{\eta}^\perp_1\otimes \hat{\eta}^\perp_2\otimes\hat{\eta}^\perp_2,f^{yy}_{yy}\rangle=0.
\end{equation}
Taking 3rd order derivative and using the above identity, we have
\[
\begin{split}
-2\frac{|\eta|^2}{\xi^2}\langle\hat{\eta}_1\otimes\hat{\eta}_1^\perp,f^{yy}_{xx}\rangle-2\frac{|\eta|}{\xi}\langle\hat{\eta}_1^\perp\otimes\hat{\eta}^\perp_1\otimes\hat{\eta}_2,f^{yy}_{xy}\rangle+4\frac{|\eta|}{\xi}\langle\hat{\eta}_1\otimes\hat{\eta}^\perp_1\otimes\hat{\eta}_2,f^{yy}_{xy}\rangle\\
+2\langle\hat{\eta}_1^\perp\otimes\hat{\eta}_1^\perp\otimes\hat{\eta}_2\otimes{\eta}_2^\perp,f^{yy}_{yy}\rangle-2\langle\hat{\eta}_1\otimes\hat{\eta}^\perp_1\otimes\hat{\eta}_2\otimes{\eta}_2,f^{yy}_{yy}\rangle=0,
\end{split}
\]
and, using \eqref{deltakernel22},
\begin{equation}\label{eq_38}
\left(\frac{|\eta|^4}{\xi^4}+2\frac{|\eta|^2}{\xi^2}+1\right)\langle\hat{\eta}_1\otimes\hat{\eta}^\perp_1\otimes\hat{\eta}_2\otimes\hat{\eta}_2,f^{yy}_{yy}\rangle-\left(\frac{|\eta|^2}{\xi^2}+1\right)\langle\hat{\eta}_1^\perp\otimes\hat{\eta}_1^\perp\otimes\hat{\eta}_2\otimes{\eta}_2^\perp,f^{yy}_{yy}\rangle=0.
\end{equation}
Taking 2nd order derivative, together with \eqref{deltakernel22}, gives
\begin{equation}\label{ttt_ss1}
\begin{split}
\left(\frac{|\eta|^4}{\xi^4}+2\frac{|\eta|^2}{\xi^2}+1\right)\langle\hat{\eta}_1\otimes\hat{\eta}_1\otimes\hat{\eta}_2\otimes\hat{\eta}_2,f^{yy}_{yy}\rangle-4\left(\frac{|\eta|^2}{\xi^2}+1\right)\langle\hat{\eta}_1\otimes\hat{\eta}^\perp_1\otimes\hat{\eta}_2\otimes\hat{\eta}^\perp_2, f^{yy}_{yy}\rangle\\
+\langle\hat{\eta}^\perp_1\otimes\hat{\eta}^\perp_1\otimes\hat{\eta}^\perp_2\otimes\hat{\eta}^\perp_2, f^{yy}_{yy}\rangle=0.
\end{split}
\end{equation}
Taking 4th order derivative, together with \eqref{deltakernel22}, we obtain
\[
\left(\frac{|\eta|^4}{\xi^4}+2\frac{|\eta|^2}{\xi^2}+1\right)\langle\hat{\eta}^\perp_1\otimes\hat{\eta}^\perp_1\otimes\hat{\eta}_2\otimes\hat{\eta}_2, f^{yy}_{yy}\rangle=0,
\]
and then
\[
\langle\hat{\eta}^\perp_1\otimes\hat{\eta}^\perp_1, f^{yy}_{xx}\rangle=\langle\hat{\eta}^\perp_1\otimes\hat{\eta}^\perp_1\otimes\hat{\eta}_2, f^{yy}_{xy}\rangle=\langle\hat{\eta}^\perp_1\otimes\hat{\eta}^\perp_1\otimes\hat{\eta}_2\otimes\hat{\eta}_2, f^{yy}_{yy}\rangle=0.
\]
~\\

\text{Step 2.} Then, we consider the identity
\begin{equation}\label{step2_eq}
\begin{split}
\tilde{S}^2\langle\hat{Y}^\perp_1,f^{xy}_{xx}\rangle+2\tilde{S}\langle\hat{Y}^\perp_1\otimes\hat{Y}_2,f^{xy}_{xy}\rangle+\langle\hat{Y}^\perp_1\otimes\hat{Y}_2\otimes\hat{Y}_2,f^{xy}_{yy}\rangle\\
-\tilde{S}^3\langle\hat{Y}^\perp_1\otimes\hat{Y}_1,f^{yy}_{xx}\rangle-2\tilde{S}^2\langle\hat{Y}^\perp_1\otimes\hat{Y}_1\otimes\hat{Y}_2,f^{yy}_{xy}\rangle-\tilde{S}\langle\hat{Y}^\perp_1\otimes\hat{Y}_1\otimes\hat{Y}_2\otimes\hat{Y}_2,f^{yy}_{yy}\rangle=0.
\end{split}
\end{equation}
Take $\hat{Y}=\epsilon\hat{\eta}+\sqrt{1-\epsilon^2}\hat{\eta}^\perp$, $\hat{Y}^\perp=\epsilon\hat{\eta}^\perp-\sqrt{1-\epsilon^2}\hat{\eta}$ and $\tilde{S}=-\frac{\epsilon|\eta|}{\xi}$.
Similar as above, we obtain
\begin{equation}\label{kernel_identity_22_2}
{\scriptsize
\begin{split}
\epsilon^3\frac{|\eta|^2}{\xi^2}\langle\hat{\eta}_1^\perp,f^{xy}_{xx}\rangle-\epsilon^2\sqrt{1-\epsilon^2}\frac{|\eta|^2}{\xi^2}\langle\hat{\eta}_1,f^{xy}_{xx}\rangle\\
-2\epsilon^3\frac{|\eta|}{\xi}\langle\hat{\eta}^\perp_1\otimes\hat{\eta}_2,f^{xy}_{xy}\rangle-2\epsilon^2\sqrt{1-\epsilon^2}\frac{|\eta|}{\xi}\langle\hat{\eta}^\perp_1\otimes\hat{\eta}^\perp_2-\hat{\eta}_1\otimes\hat{\eta}_2,f^{xy}_{xy}\rangle+2\epsilon(1-\epsilon^2)\frac{|\eta|}{\xi}\langle\hat{\eta}_1\otimes\hat{\eta}_2^\perp, f^{xy}_{xy}\rangle\\
+\epsilon^3\langle\hat{\eta}^\perp_1\otimes\hat{\eta}_2\otimes\hat{\eta}_2,f^{xy}_{yy}\rangle-\epsilon^2\sqrt{1-\epsilon^2}\langle \hat{\eta}_1\otimes\hat{\eta}_2\otimes\hat{\eta}_2,f^{xy}_{yy}\rangle
+2\epsilon^2\sqrt{1-\epsilon^2}\langle \hat{\eta}_1^\perp\otimes\hat{\eta}_2\otimes\hat{\eta}_2^\perp,f^{xy}_{yy}\rangle\\
+\epsilon(1-\epsilon^2)\langle\hat{\eta}^\perp_1\otimes \hat{\eta}^\perp_2\otimes\hat{\eta}^\perp_2,f^{xy}_{yy}\rangle-2\epsilon(1-\epsilon^2)\langle\hat{\eta}_1\otimes \hat{\eta}_2\otimes\hat{\eta}^\perp_2,f^{xy}_{yy}\rangle
-\sqrt{(1-\epsilon^2)^3}\langle\hat{\eta}_1\otimes\hat{\eta}_2^\perp\otimes\hat{\eta}_2^\perp,f^{xy}_{yy}\rangle\\
+\epsilon^5\frac{|\eta|^3}{\xi^3}\langle\hat{\eta}_1^\perp\otimes\hat{\eta}_1,f^{yy}_{xx}\rangle+\epsilon^4\sqrt{1-\epsilon^2}\frac{|\eta|^3}{\xi^3}\langle\hat{\eta}^\perp_1\otimes\hat{\eta}_1^\perp-\hat{\eta}_1\otimes\hat{\eta}_1,f^{yy}_{xx}\rangle-\epsilon^3(1-\epsilon^2)\frac{|\eta|^3}{\xi^3}\langle\hat{\eta}_1\otimes\hat{\eta}_1^\perp, f^{yy}_{xx}\rangle\\
-2\epsilon^5\frac{|\eta|^2}{\xi^2}\langle\hat{\eta}^\perp_1\otimes\hat{\eta}_1\otimes\hat{\eta}_2,f^{yy}_{xy}\rangle+2\epsilon^4\sqrt{1-\epsilon^2}\frac{|\eta|^2}{\xi^2}\langle \hat{\eta}_1\otimes\hat{\eta}_1\otimes\hat{\eta}_2,f^{yy}_{xy}\rangle
-2\epsilon^4\sqrt{1-\epsilon^2}\frac{|\eta|^2}{\xi^2}\langle \hat{\eta}_1^\perp\otimes\hat{\eta}_1\otimes\hat{\eta}_2^\perp,f^{yy}_{xy}\rangle\\
-2\epsilon^4\sqrt{1-\epsilon^2}\frac{|\eta|^2}{\xi^2}\langle \hat{\eta}_1^\perp\otimes\hat{\eta}_1^\perp\otimes\hat{\eta}_2,f^{yy}_{xy}\rangle
-2\epsilon^3(1-\epsilon^2)\frac{|\eta|^2}{\xi^2}\langle\hat{\eta}^\perp_1\otimes \hat{\eta}^\perp_1\otimes\hat{\eta}^\perp_2,f^{yy}_{xy}\rangle\\
+2\epsilon^3(1-\epsilon^2)\frac{|\eta|^2}{\xi^2}\langle\hat{\eta}_1\otimes \hat{\eta}_1\otimes\hat{\eta}^\perp_2,f^{yy}_{xy}\rangle+2\epsilon^3(1-\epsilon^2)\frac{|\eta|^2}{\xi^2}\langle\hat{\eta}_1\otimes \hat{\eta}^\perp_1\otimes\hat{\eta}_2,f^{yy}_{xy}\rangle\\
+2\epsilon^2\sqrt{(1-\epsilon^2)^3}\frac{|\eta|^2}{\xi^2}\langle\hat{\eta}_1\otimes\hat{\eta}_1^\perp\otimes\hat{\eta}_2^\perp,f^{yy}_{xy}\rangle
+(-\epsilon^3(1-\epsilon^2)+\epsilon^5)\frac{|\eta|}{\xi}\langle\hat{\eta}_1\otimes\hat{\eta}_1^\perp\otimes\hat{\eta}_2\otimes\hat{\eta}_2,f^{yy}_{yy}\rangle\\
-\epsilon^4\sqrt{1-\epsilon^2}\frac{|\eta|}{\xi}\langle\hat{\eta}_1\otimes\hat{\eta}_1\otimes\hat{\eta}_2\otimes\hat{\eta}_2,f^{yy}_{yy}\rangle+\epsilon^4\sqrt{1-\epsilon^2}\frac{|\eta|}{\xi}\langle\hat{\eta}^\perp_1\otimes\hat{\eta}^\perp_1\otimes\hat{\eta}_2\otimes\hat{\eta}_2,f^{yy}_{yy}\rangle\\
+(-2\epsilon^2(1-\epsilon^2)+2\epsilon^4)\sqrt{1-\epsilon^2}\frac{|\eta|}{\xi}\langle\hat{\eta}_1\otimes\hat{\eta}_1^\perp\otimes\hat{\eta}_2\otimes\hat{\eta}^\perp_2,f^{yy}_{yy}\rangle\\-2\epsilon^3(1-\epsilon^2)\frac{|\eta|}{\xi}\langle\hat{\eta}_1\otimes\hat{\eta}_1\otimes\hat{\eta}_2\otimes\hat{\eta}^\perp_2,f^{yy}_{yy}\rangle+2\epsilon^3(1-\epsilon^2)\frac{|\eta|}{\xi}\langle\hat{\eta}^\perp_1\otimes\hat{\eta}^\perp_1\otimes\hat{\eta}_2\otimes\hat{\eta}^\perp_2,f^{yy}_{yy}\rangle\\
+(-\epsilon(1-\epsilon^2)^2+\epsilon^3(1-\epsilon^2))\frac{|\eta|}{\xi}\langle\hat{\eta}_1\otimes\hat{\eta}_1^\perp\otimes\hat{\eta}^\perp_2\otimes\hat{\eta}^\perp_2,f^{yy}_{yy}\rangle-\epsilon^2\sqrt{(1-\epsilon^2)^3}\frac{|\eta|}{\xi}\langle\hat{\eta}_1\otimes\hat{\eta}_1\otimes\hat{\eta}^\perp_2\otimes\hat{\eta}^\perp_2,f^{yy}_{yy}\rangle\\
+\epsilon^2\sqrt{(1-\epsilon^2)^3}\frac{|\eta|}{\xi}\langle\hat{\eta}^\perp_1\otimes\hat{\eta}^\perp_1\otimes\hat{\eta}^\perp_2\otimes\hat{\eta}^\perp_2,f^{yy}_{yy}\rangle=0.
\end{split}
}
\end{equation}
Setting $\epsilon=0$, we have
\[
\langle\hat{\eta}_1\otimes\hat{\eta}_2^\perp\otimes\hat{\eta}_2^\perp,f^{xy}_{yy}\rangle=0.
\]
Taking 1st order derivative of \eqref{kernel_identity_22_2} in $\epsilon$ at $\epsilon=0$, we obtain
\[
2\frac{|\eta|}{\xi}\langle\hat{\eta}_1\otimes\hat{\eta}_2^\perp, f^{xy}_{xy}\rangle-2\langle\hat{\eta}_1\otimes\hat{\eta}_2\otimes\hat{\eta}_2^\perp,f^{xy}_{yy}\rangle+\langle\hat{\eta}^\perp_1\otimes \hat{\eta}^\perp_2\otimes\hat{\eta}^\perp_2,f^{xy}_{yy}\rangle-\frac{|\eta|}{\xi}\langle\hat{\eta}_1\otimes\hat{\eta}_1^\perp\otimes\hat{\eta}^\perp_2\otimes\hat{\eta}^\perp_2,f^{yy}_{yy}\rangle=0.
\]
Using the condition in \eqref{deltakernel22}, the above equality reduces to
\begin{equation}\label{eq_41}
2\left(\frac{|\eta|^2}{\xi^2}+1\right)\langle\hat{\eta}_1\otimes\hat{\eta}_2\otimes\hat{\eta}_2^\perp,f^{xy}_{yy}\rangle-\langle\hat{\eta}^\perp_1\otimes \hat{\eta}^\perp_2\otimes\hat{\eta}^\perp_2,f^{xy}_{yy}\rangle+\frac{|\eta|}{\xi}\langle\hat{\eta}_1\otimes\hat{\eta}_1^\perp\otimes\hat{\eta}^\perp_2\otimes\hat{\eta}^\perp_2,f^{yy}_{yy}\rangle=0.
\end{equation}
Taking 3rd order derivative and using the above identity, we have
\[
\begin{split}
\frac{|\eta|^2}{\xi^2}\langle\hat{\eta}_1^\perp,f^{xy}_{xx}\rangle-2\frac{|\eta|}{\xi}\langle\hat{\eta}^\perp_1\otimes\hat{\eta}_2,f^{xy}_{xy}\rangle+\langle\hat{\eta}^\perp_1\otimes\hat{\eta}_2\otimes\hat{\eta}_2,f^{xy}_{yy}\rangle\\
-\frac{|\eta|^3}{\xi^3}\langle\hat{\eta}_1\otimes\hat{\eta}_1^\perp,f^{yy}_{xx}\rangle-2\frac{|\eta|^2}{\xi^2}\langle\hat{\eta}_1^\perp\otimes\hat{\eta}_1^\perp\otimes\hat{\eta}_2^\perp,f^{yy}_{xy}\rangle+2\frac{|\eta|^2}{\xi^2}\langle\hat{\eta}_1\otimes\hat{\eta}_1\otimes\hat{\eta}_2^\perp,f^{yy}_{xy}\rangle\\
+2\frac{|\eta|^2}{\xi^2}\langle\hat{\eta}_1\otimes\hat{\eta}_1^\perp\otimes\hat{\eta}_2,f^{yy}_{xy}\rangle-\frac{|\eta|}{\xi}\langle\hat{\eta}_1\otimes\hat{\eta}^\perp_1\otimes\hat{\eta}_2\otimes\hat{\eta}_2,f^{yy}_{yy}\rangle+2\frac{|\eta|}{\xi}\langle\hat{\eta}_1^\perp\otimes\hat{\eta}^\perp_1\otimes\hat{\eta}_2\otimes\hat{\eta}^\perp_2,f^{yy}_{yy}\rangle\\
-2\frac{|\eta|}{\xi}\langle\hat{\eta}_1\otimes\hat{\eta}_1\otimes\hat{\eta}_2\otimes\hat{\eta}_2^\perp,f^{yy}_{yy}\rangle+\frac{|\eta|}{\xi}\langle\hat{\eta}_1\otimes\hat{\eta}^\perp_1\otimes\hat{\eta}^\perp_2\otimes\hat{\eta}_2^\perp,f^{yy}_{yy}\rangle=0
\end{split}
\]
and, using \eqref{deltakernel22},
\begin{equation}\label{eq_42}
\begin{split}
\left(\frac{|\eta|^4}{\xi^4}+2\frac{|\eta|^2}{\xi^2}+1\right)\langle\hat{\eta}^\perp_1\otimes\hat{\eta}_2\otimes\hat{\eta}_2,f^{xy}_{yy}\rangle-\left(\frac{|\eta|^5}{\xi^5}+2\frac{|\eta|^3}{\xi^3}+\frac{|\eta|}{\xi}\right)\langle\hat{\eta}\otimes\hat{\eta}^\perp_1\otimes\hat{\eta}_2\otimes\hat{\eta}_2,f^{yy}_{yy}\rangle\\
+2\left(\frac{|\eta|^3}{\xi^3}+\frac{|\eta|}{\xi}\right)\langle\hat{\eta}_1^\perp\otimes\hat{\eta}^\perp_1\otimes\hat{\eta}_2\otimes\hat{\eta}^\perp_2,f^{yy}_{yy}\rangle-2\left(\frac{|\eta|^3}{\xi^3}+\frac{|\eta|}{\xi}\right)\langle\hat{\eta}_1\otimes\hat{\eta}_1\otimes\hat{\eta}_2\otimes\hat{\eta}^\perp_2,f^{yy}_{yy}\rangle\\
+\frac{|\eta|}{\xi}\langle\hat{\eta}_1\otimes\hat{\eta}^\perp_1\otimes\hat{\eta}^\perp_2\otimes\hat{\eta}_2^\perp,f^{yy}_{yy}\rangle=0.
\end{split}
\end{equation}

Taking 5th order derivative
\[
\frac{|\eta|^3}{\xi^3}\langle\hat{\eta}_1^\perp\otimes\hat{\eta}_1,f^{yy}_{xx}\rangle-2\frac{|\eta|^2}{\xi^2}\langle\hat{\eta}^\perp_1\otimes\hat{\eta}_1\otimes\hat{\eta}_2,f^{yy}_{xy}\rangle+\frac{|\eta|}{\xi}\langle\hat{\eta}_1\otimes\hat{\eta}_1^\perp\otimes\hat{\eta}_2\otimes\hat{\eta}_2,f^{yy}_{yy}\rangle=0.
\]
Using the gauge condition \eqref{deltakernel22}, we have
\[
\langle\hat{\eta}_1^\perp\otimes\hat{\eta}_1,f^{yy}_{xx}\rangle=\langle\hat{\eta}^\perp_1\otimes\hat{\eta}_1\otimes\hat{\eta}_2,f^{yy}_{xy}\rangle=\langle\hat{\eta}_1\otimes\hat{\eta}_1^\perp\otimes\hat{\eta}_2\otimes\hat{\eta}_2,f^{yy}_{yy}\rangle=0.
\]
Substituting into \eqref{eq_38}, we have
\[
\langle\hat{\eta}_1^\perp\otimes\hat{\eta}_1^\perp\otimes\hat{\eta}_2\otimes\hat{\eta}_2^\perp,f^{yy}_{yy}\rangle=0,
\]
and, using the gauge condition,
\[
\langle\hat{\eta}_1^\perp\otimes\hat{\eta}_1^\perp\otimes\hat{\eta}_2^\perp,f^{yy}_{xy}\rangle=0,
\]
Using the trace-free condition
\[
\langle\hat{\eta}_1^\perp\otimes\hat{\eta}_2,f^{xy}_{xy}\rangle+\langle\hat{\eta}_1\otimes\hat{\eta}_1^\perp\otimes\hat{\eta}_2\otimes\hat{\eta}_2,f^{yy}_{yy}\rangle+\langle\hat{\eta}_1^\perp\otimes\hat{\eta}_1^\perp\otimes\hat{\eta}_2\otimes{\eta}_2^\perp,f^{yy}_{yy}\rangle=0,
\]
we have
\[
\langle\hat{\eta}_1^\perp,f^{xy}_{xx}\rangle=\langle\hat{\eta}_1^\perp\otimes\hat{\eta}_2,f^{xy}_{xy}\rangle=\langle\hat{\eta}_1^\perp\otimes\hat{\eta}_2\otimes\hat{\eta}_2,f^{xy}_{yy}\rangle=0.
\]

Eliminating the vanishing terms in \eqref{eq_42}, we have
\[
-2\left(\frac{|\eta|^2}{\xi^2}+1\right)\langle\hat{\eta}_1\otimes\hat{\eta}_1\otimes\hat{\eta}_2\otimes\hat{\eta}^\perp_2,f^{yy}_{yy}\rangle\\
+\langle\hat{\eta}_1\otimes\hat{\eta}^\perp_1\otimes\hat{\eta}^\perp_2\otimes\hat{\eta}_2^\perp,f^{yy}_{yy}\rangle=0
\]
and together with \eqref{eq_37}, 
\[
\langle\hat{\eta}_1\otimes\hat{\eta}_1\otimes\hat{\eta}_2\otimes\hat{\eta}^\perp_2,f^{yy}_{yy}\rangle=\langle\hat{\eta}_1\otimes\hat{\eta}^\perp_1\otimes\hat{\eta}^\perp_2\otimes\hat{\eta}_2^\perp,f^{yy}_{yy}\rangle=0.
\]
Using the gauge condition, we also get
\[
\langle\hat{\eta}_1\otimes\hat{\eta}_1\otimes\hat{\eta}^\perp_2,f^{yy}_{xy}\rangle=0.
\]
Using the trace-free condition
\[
\langle\hat{\eta}_1\otimes\hat{\eta}_2^\perp,f^{xy}_{xy}\rangle+\langle\hat{\eta}_1\otimes\hat{\eta}_1\otimes\hat{\eta}_2\otimes\hat{\eta}^\perp_2,f^{yy}_{yy}\rangle+\langle\hat{\eta}_1\otimes\hat{\eta}_1^\perp\otimes\hat{\eta}_2^\perp\otimes{\eta}_2^\perp,f^{yy}_{yy}\rangle=0,
\]
we have
\[
\langle\hat{\eta}_1\otimes\hat{\eta}^\perp_2,f^{xy}_{xy}\rangle=\langle\hat{\eta}_1\otimes\hat{\eta}^\perp_2\otimes\hat{\eta}_2,f^{xy}_{yy}\rangle=0.
\]
Using \eqref{eq_41} again, we have
\[
\langle\hat{\eta}^\perp_1\otimes \hat{\eta}^\perp_2\otimes\hat{\eta}^\perp_2,f^{xy}_{yy}\rangle=0.
\]

Next we consider the even powers of $\epsilon$. Dividing by $\epsilon^2\sqrt{1-\epsilon^2}$, we obtain
\begin{equation}\label{kernel_identity_22_2_2}
\begin{split}
-\frac{|\eta|^2}{\xi^2}\langle\hat{\eta}_1,f^{xy}_{xx}\rangle
-2\frac{|\eta|}{\xi}\langle\hat{\eta}^\perp_1\otimes\hat{\eta}^\perp_2-\hat{\eta}_1\otimes\hat{\eta}_2,f^{xy}_{xy}\rangle
-\langle \hat{\eta}_1\otimes\hat{\eta}_2\otimes\hat{\eta}_2,f^{xy}_{yy}\rangle\\
+2\langle \hat{\eta}_1^\perp\otimes\hat{\eta}_2\otimes\hat{\eta}_2^\perp,f^{xy}_{yy}\rangle
+\epsilon^2\frac{|\eta|^3}{\xi^3}\langle\hat{\eta}^\perp_1\otimes\hat{\eta}_1^\perp-\hat{\eta}_1\otimes\hat{\eta}_1,f^{yy}_{xx}\rangle
+2\epsilon^2\frac{|\eta|^2}{\xi^2}\langle \hat{\eta}_1\otimes\hat{\eta}_1\otimes\hat{\eta}_2,f^{yy}_{xy}\rangle\\
-2\epsilon^2\frac{|\eta|^2}{\xi^2}\langle \hat{\eta}_1^\perp\otimes\hat{\eta}_1\otimes\hat{\eta}_2^\perp,f^{yy}_{xy}\rangle
-2\epsilon^2\frac{|\eta|^2}{\xi^2}\langle \hat{\eta}_1^\perp\otimes\hat{\eta}_1^\perp\otimes\hat{\eta}_2,f^{yy}_{xy}\rangle\\
+2(1-\epsilon^2)\frac{|\eta|^2}{\xi^2}\langle\hat{\eta}_1\otimes\hat{\eta}_1^\perp\otimes\hat{\eta}_2^\perp,f^{yy}_{xy}\rangle
-\epsilon^2\frac{|\eta|}{\xi}\langle\hat{\eta}_1\otimes\hat{\eta}_1\otimes\hat{\eta}_2\otimes\hat{\eta}_2,f^{yy}_{yy}\rangle\\
+\epsilon^2\frac{|\eta|}{\xi}\langle\hat{\eta}^\perp_1\otimes\hat{\eta}^\perp_1\otimes\hat{\eta}_2\otimes\hat{\eta}_2,f^{yy}_{yy}\rangle\
-2(1-2\epsilon^2)\frac{|\eta|}{\xi}\langle\hat{\eta}_1\otimes\hat{\eta}_1^\perp\otimes\hat{\eta}_2\otimes\hat{\eta}^\perp_2,f^{yy}_{yy}\rangle\\
-(1-\epsilon^2)\frac{|\eta|}{\xi}\langle\hat{\eta}_1\otimes\hat{\eta}_1\otimes\hat{\eta}^\perp_2\otimes\hat{\eta}^\perp_2,f^{yy}_{yy}\rangle
+(1-\epsilon^2)\frac{|\eta|}{\xi}\langle\hat{\eta}^\perp_1\otimes\hat{\eta}^\perp_1\otimes\hat{\eta}^\perp_2\otimes\hat{\eta}^\perp_2,f^{yy}_{yy}\rangle=0.
\end{split}
\end{equation}
Setting $\epsilon=0$, we have
\[
\begin{split}
-\frac{|\eta|^2}{\xi^2}\langle\hat{\eta}_1,f^{xy}_{xx}\rangle-2\frac{|\eta|}{\xi}\langle\hat{\eta}^\perp_1\otimes\hat{\eta}^\perp_2-\hat{\eta}_1\otimes\hat{\eta}_2,f^{xy}_{xy}\rangle-\langle \hat{\eta}_1\otimes\hat{\eta}_2\otimes\hat{\eta}_2,f^{xy}_{yy}\rangle+2\langle \hat{\eta}_1^\perp\otimes\hat{\eta}_2\otimes\hat{\eta}_2^\perp,f^{xy}_{yy}\rangle\\
+2\frac{|\eta|^2}{\xi^2}\langle\hat{\eta}_1\otimes\hat{\eta}_1^\perp\otimes\hat{\eta}_2^\perp,f^{yy}_{xy}\rangle-2\frac{|\eta|}{\xi}\langle\hat{\eta}_1\otimes\hat{\eta}_1^\perp\otimes\hat{\eta}_2\otimes\hat{\eta}^\perp_2,f^{yy}_{yy}\rangle-\frac{|\eta|}{\xi}\langle\hat{\eta}_1\otimes\hat{\eta}_1\otimes\hat{\eta}^\perp_2\otimes\hat{\eta}^\perp_2,f^{yy}_{yy}\rangle\\
+\frac{|\eta|}{\xi}\langle\hat{\eta}^\perp_1\otimes\hat{\eta}^\perp_1\otimes\hat{\eta}^\perp_2\otimes\hat{\eta}^\perp_2,f^{yy}_{yy}\rangle=0.
\end{split}
\]
Using the gauge condition, we end up with
\[
\begin{split}
-\left(\frac{|\eta|^4}{\xi^4}+2\frac{|\eta|^2}{\xi^2}+1\right)\langle\hat{\eta}_1\otimes\hat{\eta}_2\otimes\hat{\eta}_2,f^{xy}_{yy}\rangle-2\left(\frac{|\eta|}{\xi}+\frac{\xi}{|\eta|}\right)\langle\hat{\eta}^\perp_1\otimes\hat{\eta}^\perp_2,f^{xy}_{xy}\rangle\\
-2\left(\frac{|\eta|^3}{\xi^3}+\frac{|\eta|}{\xi}\right)\langle\hat{\eta}_1\otimes\hat{\eta}_1^\perp\otimes\hat{\eta}_2\otimes\hat{\eta}^\perp_2,f^{yy}_{yy}\rangle-\frac{|\eta|}{\xi}\langle\hat{\eta}_1\otimes\hat{\eta}_1\otimes\hat{\eta}^\perp_2\otimes\hat{\eta}^\perp_2,f^{yy}_{yy}\rangle\\
+\frac{|\eta|}{\xi}\langle\hat{\eta}^\perp_1\otimes\hat{\eta}^\perp_1\otimes\hat{\eta}^\perp_2\otimes\hat{\eta}^\perp_2,f^{yy}_{yy}\rangle=0.
\end{split}
\]
which can be rewritten as
\[
\begin{split}
\left(\frac{|\eta|^4}{\xi^4}+2\frac{|\eta|^2}{\xi^2}+1\right)\langle\hat{\eta}_1\otimes\hat{\eta}_2,f^{xy}_{xy}\rangle-2\left(\frac{|\eta|^2}{\xi^2}+1\right)\langle\hat{\eta}^\perp_1\otimes\hat{\eta}^\perp_2,f^{xy}_{xy}\rangle\\
-2\left(\frac{|\eta|^4}{\xi^4}+\frac{|\eta|^2}{\xi^2}\right)\langle\hat{\eta}_1\otimes\hat{\eta}_1^\perp\otimes\hat{\eta}_2\otimes\hat{\eta}^\perp_2,f^{yy}_{yy}\rangle
+\frac{|\eta|^2}{\xi^2}\langle\hat{\eta}^\perp_1\otimes\hat{\eta}^\perp_1\otimes\hat{\eta}^\perp_2\otimes\hat{\eta}^\perp_2,f^{yy}_{yy}\rangle=0.
\end{split}
\]
Taking 2nd order derivative gives
\begin{equation}\label{ttt_ss}
\begin{split}
\frac{|\eta|^3}{\xi^3}\langle\hat{\eta}^\perp_1\otimes\hat{\eta}_1^\perp-\hat{\eta}_1\otimes\hat{\eta}_1,f^{yy}_{xx}\rangle
+2\frac{|\eta|^2}{\xi^2}\langle \hat{\eta}_1\otimes\hat{\eta}_1\otimes\hat{\eta}_2,f^{yy}_{xy}\rangle\\
-2\frac{|\eta|^2}{\xi^2}\langle \hat{\eta}_1^\perp\otimes\hat{\eta}_1\otimes\hat{\eta}_2^\perp,f^{yy}_{xy}\rangle
-2\frac{|\eta|^2}{\xi^2}\langle \hat{\eta}_1^\perp\otimes\hat{\eta}_1^\perp\otimes\hat{\eta}_2,f^{yy}_{xy}\rangle
-2\frac{|\eta|^2}{\xi^2}\langle\hat{\eta}_1\otimes\hat{\eta}_1^\perp\otimes\hat{\eta}_2^\perp,f^{yy}_{xy}\rangle\\
-\frac{|\eta|}{\xi}\langle\hat{\eta}_1\otimes\hat{\eta}_1\otimes\hat{\eta}_2\otimes\hat{\eta}_2,f^{yy}_{yy}\rangle+\frac{|\eta|}{\xi}\langle\hat{\eta}^\perp_1\otimes\hat{\eta}^\perp_1\otimes\hat{\eta}_2\otimes\hat{\eta}_2,f^{yy}_{yy}\rangle\\
+4\frac{|\eta|}{\xi}\langle\hat{\eta}_1\otimes\hat{\eta}_1^\perp\otimes\hat{\eta}_2\otimes\hat{\eta}^\perp_2,f^{yy}_{yy}\rangle\\
+\frac{|\eta|}{\xi}\langle\hat{\eta}_1\otimes\hat{\eta}_1\otimes\hat{\eta}^\perp_2\otimes\hat{\eta}^\perp_2,f^{yy}_{yy}\rangle
-\frac{|\eta|}{\xi}\langle\hat{\eta}^\perp_1\otimes\hat{\eta}^\perp_1\otimes\hat{\eta}^\perp_2\otimes\hat{\eta}^\perp_2,f^{yy}_{yy}\rangle=0.
\end{split}
\end{equation}
Together with \eqref{deltakernel22}, we have
\[
\begin{split}
\left(\frac{|\eta|^5}{\xi^5}+2\frac{|\eta|^3}{\xi^3}+\frac{|\eta|}{\xi}\right)\langle\hat{\eta}^\perp_1\otimes\hat{\eta}^\perp_1\otimes\hat{\eta}_2\otimes\hat{\eta}_2,f^{yy}_{yy}\rangle-\left(\frac{|\eta|^5}{\xi^5}+2\frac{|\eta|^3}{\xi^3}+\frac{|\eta|}{\xi}\right)\langle\hat{\eta}_1\otimes\hat{\eta}_1\otimes\hat{\eta}_2\otimes\hat{\eta}_2,f^{yy}_{yy}\rangle\\
+4\left(\frac{|\eta|^3}{\xi^3}+\frac{|\eta|}{\xi}\right)\langle\hat{\eta}_1\otimes\hat{\eta}_1^\perp\otimes\hat{\eta}_2\otimes\hat{\eta}^\perp_2,f^{yy}_{yy}\rangle\\+\frac{|\eta|}{\xi}\langle\hat{\eta}_1\otimes\hat{\eta}_1\otimes\hat{\eta}^\perp_2\otimes\hat{\eta}^\perp_2,f^{yy}_{yy}\rangle
-\frac{|\eta|}{\xi}\langle\hat{\eta}^\perp_1\otimes\hat{\eta}^\perp_1\otimes\hat{\eta}^\perp_2\otimes\hat{\eta}^\perp_2,f^{yy}_{yy}\rangle=0.
\end{split}
\]
which further reduces to
\[
\begin{split}
-\left(\frac{|\eta|^5}{\xi^5}+2\frac{|\eta|^3}{\xi^3}+\frac{|\eta|}{\xi}\right)\langle\hat{\eta}_1\otimes\hat{\eta}_1\otimes\hat{\eta}_2\otimes\hat{\eta}_2,f^{yy}_{yy}\rangle
+4\left(\frac{|\eta|^3}{\xi^3}+\frac{|\eta|}{\xi}\right)\langle\hat{\eta}_1\otimes\hat{\eta}_1^\perp\otimes\hat{\eta}_2\otimes\hat{\eta}^\perp_2,f^{yy}_{yy}\rangle\\
-\frac{|\eta|}{\xi}\langle\hat{\eta}^\perp_1\otimes\hat{\eta}^\perp_1\otimes\hat{\eta}^\perp_2\otimes\hat{\eta}^\perp_2,f^{yy}_{yy}\rangle=0.
\end{split}
\]
~\\

Together with \eqref{ttt_ss1} and the trace-free conditions
\[
\langle\hat{\eta}^\perp\otimes\hat{\eta}^\perp,f^{xy}_{xy}\rangle+\langle\hat{\eta}_1\otimes\hat{\eta}_1^\perp\otimes\hat{\eta}_2^\perp\otimes \hat{\eta}_2, f^{yy}_{yy}\rangle+\langle\hat{\eta}_1^\perp\otimes\hat{\eta}_1^\perp\otimes\hat{\eta}_2^\perp\otimes\hat{\eta}_2^\perp,f^{yy}_{yy}\rangle=0,
\]
\[
\langle\hat{\eta}_1\otimes\hat{\eta}_2,f^{xy}_{xy}\rangle+\langle\hat{\eta}_1\otimes\hat{\eta}_1\otimes\hat{\eta}_2\otimes\hat{\eta}_2,f^{yy}_{yy}\rangle+\langle\hat{\eta}_1\otimes\hat{\eta}_1^\perp\otimes\hat{\eta}_2^\perp\otimes \hat{\eta}_2, f^{yy}_{yy}\rangle=0,
\]
we now write
\[
{\scriptsize
\left(\begin{array}{ccccc}
0&0&\frac{|\eta|^4}{\xi^4}+2\frac{|\eta|^2}{\xi^2}+1&\frac{|\eta|^2}{\xi^2}+1&1\\
\frac{|\eta|^4}{\xi^4}+2\frac{|\eta|^2}{\xi^2}+1&-2(\frac{|\eta|^2}{\xi^2}+1)&0&2(\frac{|\eta|^4}{\xi^4}+\frac{|\eta|^2}{\xi^2})&\frac{|\eta|^2}{\xi^2}\\
0&0&-(\frac{|\eta|^5}{\xi^5}+2\frac{|\eta|^3}{\xi^3}+\frac{|\eta|}{\xi})&4(\frac{|\eta|^3}{\xi^3}+\frac{|\eta|}{\xi})&-\frac{|\eta|}{\xi}\\
1 & 0 & 1&1&1\\
0&1&0&1&1
\end{array}\right)
\left(\begin{array}{c}
\langle\hat{\eta}_1\otimes\hat{\eta}_2,f^{xy}_{xy}\rangle\\
\langle\hat{\eta}^\perp_1\otimes\hat{\eta}^\perp_2,f^{xy}_{xy}\rangle\\
\langle\hat{\eta}_1\otimes\hat{\eta}_1\otimes\hat{\eta}_2\otimes\hat{\eta}_2,f^{yy}_{yy}\rangle\\
\langle\hat{\eta}_1\otimes\hat{\eta}_1^\perp\otimes \hat{\eta}_2\otimes\hat{\eta}_2^\perp, f^{yy}_{yy}\rangle\\
\langle\hat{\eta}_1^\perp\otimes\hat{\eta}_1^\perp\otimes\hat{\eta}_2^\perp\otimes\hat{\eta}_2^\perp,f^{yy}_{yy}\rangle
\end{array}\right)=0.
}
\]
Note that the above linear system is nonsingular, so
\[
\begin{split}
\langle\hat{\eta}_1\otimes\hat{\eta}_2,f^{xy}_{xy}\rangle=
\langle\hat{\eta}_1^\perp\otimes\hat{\eta}_2^\perp,f^{xy}_{xy}\rangle=&
\langle\hat{\eta}_1\otimes\hat{\eta}_1\otimes\hat{\eta}_2\otimes\hat{\eta}_2,f^{yy}_{yy}\rangle
=\langle\hat{\eta}_1\otimes\hat{\eta}_1^\perp\otimes \hat{\eta}_2\otimes\hat{\eta}_2^\perp, f^{yy}_{yy}\rangle\\
&\quad\quad\quad=\langle\hat{\eta}_1^\perp\otimes\hat{\eta}_1^\perp\otimes\hat{\eta}_2^\perp\otimes\hat{\eta}_2^\perp,f^{yy}_{yy}\rangle=0.
\end{split}
\]
Using the gauge conditions, we also have
\[
\begin{split}
\langle\hat{\eta}_1,f^{xy}_{xx}\rangle=\langle\hat{\eta}_1\otimes\hat{\eta}_2\otimes\hat{\eta}_2,f^{xy}_{yy}\rangle=\langle\hat{\eta}_1^\perp\otimes\hat{\eta}_2^\perp\otimes\hat{\eta}_2,f^{xy}_{yy}\rangle=\langle\hat{\eta}_1\otimes\hat{\eta}_1\otimes\hat{\eta}_2,f^{yy}_{xy}\rangle\\
=\langle\hat{\eta}_1\otimes\hat{\eta}_1,f^{yy}_{xx}\rangle=\langle\hat{\eta}_1\otimes\hat{\eta}_1^\perp\otimes\hat{\eta}_2^\perp, f^{yy}_{xy}\rangle=0.
\end{split}
\]

\text{Step 3.} Now we consider the following identity:
\begin{equation}\label{step3_eq}
\begin{split}
\tilde{S}^2f^{xx}_{xx}+2\tilde{S}\langle\hat{Y}_2,f^{xx}_{xy}\rangle+\langle\hat{Y}_2\otimes\hat{Y}_2,f^{xx}_{yy}\rangle-2\tilde{S}^3\langle\hat{Y}_1,f^{xy}_{xx}\rangle-4\tilde{S}^2\langle\hat{Y}_1\otimes\hat{Y}_2,f^{xy}_{xy}\rangle\\
-2\tilde{S}\langle\hat{Y}_1\otimes\hat{Y}_2\otimes\hat{Y}_2,f^{xy}_{yy}\rangle
+\tilde{S}^4\langle\hat{Y}_1\otimes\hat{Y}^1,f^{yy}_{xx}\rangle+2\tilde{S}^3\langle\hat{Y}_1\otimes\hat{Y}_1\otimes\hat{Y}_2,f^{yy}_{xy}\rangle\\
+\tilde{S}^2\langle\hat{Y}_1\otimes\hat{Y}_1\otimes\hat{Y}_2\otimes\hat{Y}_2,f^{yy}_{yy}\rangle=0.
\end{split}
\end{equation}
Taking $\hat{Y}=\epsilon\hat{\eta}+\sqrt{1-\epsilon^2}\hat{\eta}^\perp$ and $\tilde{S}=-\frac{\epsilon|\eta|}{\xi}$ as above,
we get

\begin{equation}\label{long_identity}
{\tiny
\begin{split}
\frac{\epsilon^2|\eta|^2}{\xi^2}f^{xx}_{xx}-2\frac{\epsilon^2|\eta|}{\xi}\langle\hat{\eta}_2,f^{xx}_{xy}\rangle-2\frac{\epsilon\sqrt{1-\epsilon^2}|\eta|}{\xi}\langle\hat{\eta}_2^\perp,f^{xx}_{xy}\rangle
+\epsilon^2\langle\hat{\eta}_2\otimes\hat{\eta}_2,f^{xx}_{yy}\rangle+2\epsilon\sqrt{1-\epsilon^2}\langle\hat{\eta}_2^\perp\otimes\hat{\eta}_2,f^{xx}_{yy}\rangle+(1-\epsilon^2)\langle\hat{\eta}_2^\perp\otimes\hat{\eta}_2^\perp,f^{xx}_{yy}\rangle\\
+2\frac{\epsilon^4|\eta|^3}{\xi^3}\langle\hat{\eta}_1,f^{xy}_{xx}\rangle+2\frac{\epsilon^3\sqrt{1-\epsilon^2}|\eta|^3}{\xi^3}\langle\hat{\eta}_1^\perp,f^{xy}_{xx}\rangle
-4\frac{\epsilon^4|\eta|^2}{\xi^2}\langle\hat{\eta}_1\otimes\hat{\eta}_2,f^{xy}_{xy}\rangle-4\frac{\epsilon^3\sqrt{1-\epsilon^2}|\eta|^2}{\xi^2}\langle\hat{\eta}_1^\perp\otimes\hat{\eta}_2,f^{xy}_{xy}\rangle-4\frac{\epsilon^3\sqrt{1-\epsilon^2}|\eta|^2}{\xi^2}\langle\hat{\eta}_1\otimes\hat{\eta}_2^\perp,f^{xy}_{xy}\rangle\\
-4\frac{\epsilon^2(1-\epsilon^2)|\eta|^2}{\xi^2}\langle\hat{\eta}_1^\perp\otimes\hat{\eta}_2^\perp,f^{xy}_{xy}\rangle+2\frac{\epsilon^4|\eta|}{\xi}\langle\hat{\eta}_1\otimes\hat{\eta}_2\otimes\hat{\eta}_2,f^{xy}_{yy}\rangle+2\frac{\epsilon^3\sqrt{1-\epsilon^2}|\eta|}{\xi}\langle\hat{\eta}_1^\perp\otimes\hat{\eta}_2\otimes\hat{\eta}_2,f^{xy}_{yy}\rangle
+4\frac{\epsilon^3\sqrt{1-\epsilon^2}|\eta|}{\xi}\langle\hat{\eta}_1\otimes\hat{\eta}_2^\perp\otimes\hat{\eta}_2,f^{xy}_{yy}\rangle\\
+4\frac{\epsilon^2(1-\epsilon^2)|\eta|}{\xi}\langle\hat{\eta}_1^\perp\otimes\hat{\eta}_2^\perp\otimes\hat{\eta}_2,f^{xy}_{yy}\rangle+2\frac{\epsilon^2(1-\epsilon^2)|\eta|}{\xi}\langle\hat{\eta}_1\otimes\hat{\eta}^\perp_2\otimes\hat{\eta}_2^\perp,f^{xy}_{yy}\rangle+2\frac{\epsilon\sqrt{(1-\epsilon^2)^3}|\eta|}{\xi}\langle\hat{\eta}_1^\perp\otimes\hat{\eta}_2^\perp\otimes\hat{\eta}_2^\perp,f^{xy}_{yy}\rangle\\
+\frac{\epsilon^6|\eta|^4}{\xi^4}\langle\hat{\eta}_1\otimes\hat{\eta}_1,f^{yy}_{xx}\rangle+2\frac{\epsilon^5\sqrt{1-\epsilon^2}|\eta|^4}{\xi^4}\langle\hat{\eta}_1^\perp\otimes\hat{\eta}_1,f^{yy}_{xx}\rangle+\frac{\epsilon^4(1-\epsilon^2)|\eta|^4}{\xi^4}\langle\hat{\eta}_2^\perp\otimes\hat{\eta}_2^\perp,f_{xx}^{yy}\rangle\\
-2\frac{\epsilon^6|\eta|^3}{\xi^3}\langle\hat{\eta}_1\otimes\hat{\eta}_2\otimes\hat{\eta}_2,f_{xy}^{yy}\rangle-2\frac{\epsilon^5\sqrt{1-\epsilon^2}|\eta|^3}{\xi^3}\langle\hat{\eta}_1\otimes\hat{\eta}_2\otimes\hat{\eta}_2^\perp,f_{xy}^{yy}\rangle
-4\frac{\epsilon^5\sqrt{1-\epsilon^2}|\eta|^3}{\xi^3}\langle\hat{\eta}_1\otimes\hat{\eta}_2^\perp\otimes\hat{\eta}_2,f_{xy}^{yy}\rangle-4\frac{\epsilon^4(1-\epsilon^2)|\eta|^3}{\xi^3}\langle\hat{\eta}_1\otimes\hat{\eta}_2^\perp\otimes\hat{\eta}_2^\perp,f_{xy}^{yy}\rangle\\
-2\frac{\epsilon^4(1-\epsilon^2)|\eta|^3}{\xi^3}\langle\hat{\eta}_1^\perp\otimes\hat{\eta}_2^\perp\otimes\hat{\eta}_2,f_{xy}^{yy}\rangle-2\frac{\epsilon^3\sqrt{(1-\epsilon^2)^3}|\eta|^3}{\xi^3}\langle\hat{\eta}_1^\perp\otimes\hat{\eta}_2^\perp\otimes\hat{\eta}_2^\perp,f_{xy}^{yy}\rangle\\
+\frac{\epsilon^6|\eta|^2}{\xi^2}\langle\hat{\eta}_1\otimes\hat{\eta}_1\otimes\hat{\eta}_2\otimes\hat{\eta}_2,f^{yy}_{yy}\rangle+2\frac{\epsilon^5\sqrt{1-\epsilon^2}|\eta|^2}{\xi^2}\langle\hat{\eta}_1^\perp\otimes\hat{\eta}_1\otimes\hat{\eta}_2\otimes\hat{\eta}_2,f^{yy}_{yy}\rangle
+2\frac{\epsilon^5\sqrt{1-\epsilon^2}|\eta|^2}{\xi^2}\langle\hat{\eta}_1\otimes\hat{\eta}_1\otimes\hat{\eta}_2^\perp\otimes\hat{\eta}_2,f^{yy}_{yy}\rangle\\
+\frac{\epsilon^4(1-\epsilon^2)|\eta|^2}{\xi^2}\langle\hat{\eta}_1^\perp\otimes\hat{\eta}_1^\perp\otimes\hat{\eta}_2\otimes\hat{\eta}_2,f^{yy}_{yy}\rangle+\frac{\epsilon^4(1-\epsilon^2)|\eta|^2}{\xi^2}\langle\hat{\eta}_1\otimes\hat{\eta}_1\otimes\hat{\eta}_2^\perp\otimes\hat{\eta}_2^\perp,f^{yy}_{yy}\rangle+4\frac{\epsilon^4(1-\epsilon^2)|\eta|^2}{\xi^2}\langle\hat{\eta}_1^\perp\otimes\hat{\eta}_1\otimes\hat{\eta}_2^\perp\otimes\hat{\eta}_2,f^{yy}_{yy}\rangle\\
+2\frac{\epsilon^3\sqrt{(1-\epsilon^2)^3}|\eta|^2}{\xi^2}\langle\hat{\eta}_1^\perp\otimes\hat{\eta}_1^\perp\otimes\hat{\eta}_2^\perp\otimes\hat{\eta}_2,f^{yy}_{yy}\rangle+2\frac{\epsilon^3\sqrt{(1-\epsilon^2)^3}|\eta|^2}{\xi^2}\langle\hat{\eta}_1^\perp\otimes\hat{\eta}_1\otimes\hat{\eta}_2^\perp\otimes\hat{\eta}_2^\perp,f^{yy}_{yy}\rangle\\
+\frac{\epsilon^2(1-\epsilon^2)^2|\eta|^2}{\xi^2}\langle\hat{\eta}_1^\perp\otimes\hat{\eta}_1^\perp\otimes\hat{\eta}_2^\perp\otimes\hat{\eta}_2^\perp,f^{yy}_{yy}\rangle=0.
\end{split}
}
\end{equation}

Eliminating all the terms proved to be vanishing in Step 1 and Step 2, we have
\[
\begin{split}
\frac{\epsilon^2|\eta|^2}{\xi^2}f^{xx}_{xx}-2\frac{\epsilon^2|\eta|}{\xi}\langle\hat{\eta}_1,f^{xx}_{xy}\rangle-2\frac{\epsilon\sqrt{1-\epsilon^2}|\eta|}{\xi}\langle\hat{\eta}_1^\perp,f^{xx}_{xy}\rangle
+\epsilon^2\langle\hat{\eta}_2\otimes\hat{\eta}_2,f^{xx}_{yy}\rangle+2\epsilon\sqrt{1-\epsilon^2}\langle\hat{\eta}_2^\perp\otimes\hat{\eta}_2,f^{xx}_{yy}\rangle=0.
\end{split}
\]
Using the identity $\langle\hat{\eta}_1\otimes\hat{\eta}_2,f^{xy}_{xy}\rangle=\langle\hat{\eta}_1^\perp\otimes\hat{\eta}_2^\perp,f^{xy}_{xy}\rangle=0$ and the trace-free condition
\[
f^{xx}_{xx}+\langle\hat{\eta}_1\otimes\hat{\eta}_2,f^{xy}_{xy}\rangle+\langle\hat{\eta}_1^\perp\otimes\hat{\eta}_2^\perp,f^{xy}_{xy}\rangle=0,
\]
we have $f^{xx}_{xx}=0$, and then $\langle\hat{\eta}_2,f^{xx}_{xy}\rangle=\langle\hat{\eta}_2\otimes\hat{\eta}_2,f^{xx}_{yy}\rangle=0$. So we have
\[
-2\frac{|\eta|}{\xi}\langle\hat{\eta}_2^\perp,f^{xx}_{xy}\rangle+2\langle\hat{\eta}_2^\perp\otimes\hat{\eta}_2,f^{xx}_{yy}\rangle=0.
\]
Using the gauge condition $f^{xx}_{xy}=-\frac{|\eta|}{\xi}\langle\hat{\eta}_2,f^{xx}_{yy}\rangle$, we have
\[
(\frac{|\eta|^2}{\xi^2}+1)\langle\hat{\eta}^\perp_2\otimes\hat{\eta}_2,f^{xx}_{yy}\rangle=0
\]
and consequently
\[
\langle\hat{\eta}^\perp_2,f^{xx}_{xy}\rangle=\langle\hat{\eta}_2^\perp\otimes\hat{\eta}_2,f^{xx}_{yy}\rangle=0.
\]
~\\
\textbf{Case 2: $\xi=0$} (so $\eta\neq 0$).
Now the gauge condition becomes
\begin{equation}\label{degnerategauge}
\langle\hat{\eta}_2,f^{xx}_{xy}\rangle=
\langle\hat{\eta}_2,f^{xy}_{xy}\rangle=
\langle\hat{\eta}_2,f^{yy}_{xy}\rangle=
\langle\hat{\eta}_2,f^{xx}_{yy}\rangle=
\langle\hat{\eta}_2,f^{xy}_{yy}\rangle=
\langle\hat{\eta}_2,f^{yy}_{yy}\rangle=0.
\end{equation}
In this case, we can take $\tilde{S}$ to be an arbitrary number sufficiently close to $0$, and $\hat{Y}=\hat{\eta}^\perp$. Taking first and second derivatives of \eqref{step1eq} in $\tilde{S}$ at $\tilde{S}=0$, we obtain
\[
\langle \hat{\eta}_1\otimes \hat{\eta}_1,f^{yy}_{xx}\rangle=0,\quad \langle\hat{\eta}_1\otimes \hat{\eta}_1\otimes\hat{\eta}^\perp_2,f^{yy}_{xy}\rangle=0.
\]

Taking $3$rd order derivatives of \eqref{step2_eq} in $\tilde{S}$ at $\tilde{S}=0$, we have
\[
\langle \hat{\eta}_1\otimes \hat{\eta}_1^\perp, f^{yy}_{xx}\rangle=0.
\]
 Taking $1$st, $2$nd derivatives, we have
 \[
2\langle\hat{\eta}_1\otimes\hat{\eta}_2^\perp, f^{xy}_{xy}\rangle-\langle\hat{\eta}_1\otimes \hat{\eta}_1^\perp\otimes \hat{\eta}_2^\perp\otimes \hat{\eta}_2^\perp, f^{yy}_{yy}\rangle=0,
\]
\[
\langle\hat{\eta}_1,f^{xy}_{xx}\rangle-2\langle\hat{\eta}_1\otimes\hat{\eta}_1^\perp\otimes \hat{\eta}_2^\perp, f^{yy}_{xy}\rangle=0.
\]
Combined with the trace-free conditions
\[
\langle\hat{\eta}_1\otimes\hat{\eta}_2^\perp, f^{xy}_{xy}\rangle+\langle\hat{\eta}_1\otimes \hat{\eta}_1^\perp\otimes \hat{\eta}_2^\perp\otimes \hat{\eta}_2^\perp, f^{yy}_{yy}\rangle+\langle\hat{\eta}_1\otimes \hat{\eta}_1\otimes \hat{\eta}_2\otimes \hat{\eta}_2^\perp, f^{yy}_{yy}\rangle=0,
\]
\[
\langle\hat{\eta}_1,f^{xy}_{xx}\rangle+\langle\hat{\eta}_1\otimes\hat{\eta}_1^\perp\otimes \hat{\eta}_2^\perp, f^{yy}_{xy}\rangle+\langle\hat{\eta}_1\otimes\hat{\eta}_1\otimes \hat{\eta}_2, f^{yy}_{xy}\rangle=0
\]
and the gauge conditions \eqref{degnerategauge}, we conclude that
\[
\langle\hat{\eta}_1\otimes\hat{\eta}_2^\perp, f^{xy}_{xy}\rangle=\langle\hat{\eta}_1\otimes \hat{\eta}_1^\perp\otimes \hat{\eta}_2^\perp\otimes \hat{\eta}_2^\perp, f^{yy}_{yy}\rangle=\langle\hat{\eta}_1,f^{xy}_{xx}\rangle=\langle\hat{\eta}_1\otimes\hat{\eta}_1^\perp\otimes \hat{\eta}_2^\perp, f^{yy}_{xy}\rangle=0.
\]

Taking $4$th derivatives of \eqref{step3_eq} in $\tilde{S}$ at $\tilde{S}=0$, we obtain
\[
\langle\hat{\eta}_1^\perp\otimes \hat{\eta}_1^\perp,f^{yy}_{xx}\rangle=0.
\]
Taking $1$st, $3$rd derivatives of \eqref{step3_eq} in $\tilde{S}$ at $\tilde{S}=0$, we obtain
\[
\langle \hat{\eta}_2^\perp,f^{xx}_{xy}\rangle-\langle \hat{\eta}_1^\perp\otimes \hat{\eta}_2^\perp\otimes\hat{\eta}_2^\perp, f^{xy}_{yy}\rangle=0,
\]
\[
\langle \hat{\eta}_1^\perp,f^{xy}_{xx}\rangle-\langle \hat{\eta}_1^\perp\otimes \hat{\eta}_1^\perp\otimes \hat{\eta}_2^\perp,f^{yy}_{xy}\rangle=0.
\]
Together with the trace-free conditions
\[
\langle \hat{\eta}_2^\perp,f^{xx}_{xy}\rangle+\langle \hat{\eta}_1^\perp\otimes \hat{\eta}_2^\perp\otimes\hat{\eta}_2^\perp, f^{xy}_{yy}\rangle+\langle \hat{\eta}_1\otimes \hat{\eta}_2\otimes\hat{\eta}_2^\perp, f^{xy}_{yy}\rangle=0,
\]
\[
\langle \hat{\eta}_1^\perp,f^{xy}_{xx}\rangle+\langle \hat{\eta}_1^\perp\otimes \hat{\eta}_1^\perp\otimes \hat{\eta}_2^\perp,f^{yy}_{xy}\rangle+\langle \hat{\eta}_1^\perp\otimes \hat{\eta}_1\otimes \hat{\eta}_2,f^{yy}_{xy}\rangle=0,
\]
and the gauge conditions \eqref{degnerategauge}, we conclude that
\[
\langle \hat{\eta}_2^\perp,f^{xx}_{xy}\rangle=\langle \hat{\eta}_1^\perp\otimes \hat{\eta}_2^\perp\otimes\hat{\eta}_2^\perp, f^{xy}_{yy}\rangle=\langle \hat{\eta}_1^\perp,f^{xy}_{xx}\rangle=\langle \hat{\eta}_1^\perp\otimes \hat{\eta}_1^\perp\otimes \hat{\eta}_2^\perp,f^{yy}_{xy}\rangle=0.
\]
Taking $2$nd derivatives of \eqref{step3_eq} in $\tilde{S}$ at $\tilde{S}=0$, we obtain
\[
f^{xx}_{xx}-4\langle \hat{\eta}_1^\perp\otimes \hat{\eta}_2^\perp,f^{xy}_{xy}\rangle+\langle \hat{\eta}_1^\perp\otimes \hat{\eta}_1^\perp\otimes \hat{\eta}_2^\perp\otimes\hat{\eta}_2^\perp,f^{yy}_{yy}\rangle=0.
\]
Considering also the gauge conditions
\[
f^{xx}_{xx}+\langle \hat{\eta}_1^\perp\otimes \hat{\eta}_2^\perp,f^{xy}_{xy}\rangle+\langle \hat{\eta}_1\otimes \hat{\eta}_2,f^{xy}_{xy}\rangle=0
\]
and
\[
\langle \hat{\eta}_1^\perp\otimes \hat{\eta}_2^\perp,f^{xy}_{xy}\rangle+\langle \hat{\eta}_1^\perp\otimes \hat{\eta}_1^\perp\otimes \hat{\eta}_2^\perp\otimes\hat{\eta}_2^\perp,f^{yy}_{yy}\rangle+\langle \hat{\eta}_1^\perp\otimes \hat{\eta}_1\otimes \hat{\eta}_2\otimes\hat{\eta}_2^\perp,f^{yy}_{yy}\rangle=0,
\]
and the gauge conditions \eqref{degnerategauge}, we have
\[
f^{xx}_{xx}=\langle \hat{\eta}_1^\perp\otimes \hat{\eta}_2^\perp,f^{xy}_{xy}\rangle=\langle \hat{\eta}_1^\perp\otimes \hat{\eta}_1^\perp\otimes \hat{\eta}_2^\perp\otimes\hat{\eta}_2^\perp,f^{yy}_{yy}\rangle=0.
\]

The above argument shows that if $f$ is in the kernel of the principal symbol of $\delta_\mathsf{F}$ and in the kernel of the matrix \eqref{matrix22_fiberinfinity} for any $(\tilde{S},\hat{Y})$ on the equatorial sphere $\xi\tilde{S}+\eta\cdot\hat{Y}$, we must have $f\equiv 0$. This proves the ellipticity at $x=0$ of $N_\mathsf{F}$ at fiber infinity restricted to the kernel of the principal symbol of $\delta^\mathcal{B}_\mathsf{F}$.
\end{proof}

\begin{lemma}
Assume $n=3$. The operator $N_\mathsf{F}\in \Psi_{sc}^{-1,0}(X;\mathcal{B}S^2_2{^{sc}T}X,\mathcal{B}S^2_2{^{sc}T}X)$ is elliptic at base infinity of $^{sc}T^*X$ when restricted to the kernel of the principal symbol of $\delta^\mathcal{B}_\mathsf{F}$.
\end{lemma}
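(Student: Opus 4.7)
The plan is to mimic the structure of the preceding base-infinity proof for $T_2$ combined with the fiber-infinity analysis for $L_{2,2}$ just carried out. First, I would replace the cutoff by $\chi(s)=e^{-s^2/(2\nu(\hat{Y}))}$ with $\nu=\mathsf{F}^{-1}\alpha$ and set $h=\mathsf{F}^{-1}$ as a semiclassical parameter. Taking the $(X,Y)$-Fourier transform of the $9\times 9$ matrix kernel of $N_\mathsf{F}$ at the front face $x=0$ produces an integral over $\mathbb{S}^{n-2}=\mathbb{S}^{1}$ of the form
\[
\int_{\mathbb{S}^{1}}(\xi_\mathsf{F}^{2}+1)^{-1/2}\,\mathcal{M}_{22}\, e^{-(\hat{Y}\cdot\eta_\mathsf{F})^{2}/(2h\phi_\mathsf{F}(\xi_\mathsf{F},\hat{Y}))}\,\mathrm{d}\hat{Y},
\]
with $\mathcal{M}_{22}$ the matrix obtained from the fiber-infinity matrix \eqref{matrix22_fiberinfinity} by the substitution $\tilde{S}\mapsto (\xi_\mathsf{F}\pm\mathrm{i})\frac{\hat{Y}\cdot\eta_\mathsf{F}}{\xi_\mathsf{F}^{2}+1}$ on the left and right factors respectively (exactly the same substitution principle as for $\mathcal{M}_{20}$). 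The claim is that this integral, viewed as a quadratic form restricted to the kernel of the principal symbol of $\delta^\mathcal{B}_\mathsf{F}$, is positive definite for $\mathsf{F}$ large.

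Suppose $f$ annihilates this integrand. As in the $T_2$ case, take first $\hat{Y}=\hat{\eta}^\perp_\mathsf{F}$ perpendicular to $\eta_\mathsf{F}$, which kills the two factors $(\xi_\mathsf{F}\pm\mathrm{i})\frac{\hat{Y}\cdot\eta_\mathsf{F}}{\xi_\mathsf{F}^2+1}$; the remaining matrix is identical in structure to the $\tilde{S}=0$, $\hat{Y}=\hat{\eta}^\perp$ matrix from the previous lemma, and thus yields the analogues of identities \eqref{S0_identity_1}--\eqref{S0_identity_3} with $\hat{\eta}^\perp_\mathsf{F}$ in place of $\hat{\eta}^\perp$. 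Next take $\hat{Y}=\epsilon\hat{\eta}_\mathsf{F}+\sqrt{1-\epsilon^{2}}\hat{\eta}_\mathsf{F}^\perp$ and expand each of the three identities \eqref{step1eq}, \eqref{step2_eq}, \eqref{step3_eq} (with $\tilde{S}$ replaced by $(\xi_\mathsf{F}\pm\mathrm{i})\frac{\epsilon|\eta_\mathsf{F}|}{\xi_\mathsf{F}^{2}+1}$) in powers of $\epsilon$. Differentiating in $\epsilon$ at $\epsilon=0$ produces relations among the components of $f$ that are the semiclassical analogues of \eqref{eq_fyy}--\eqref{ttt_ss}; crucially the prefactors $\frac{|\eta|^{2k}}{\xi^{2k}}+\cdots$ become polynomials in $\frac{|\eta_\mathsf{F}|^2}{\xi_\mathsf{F}^2+1}$ with complex coefficients, but the leading terms do not vanish (since $(\xi_\mathsf{F}\pm\mathrm{i})\neq 0$ for real $\xi_\mathsf{F}$).

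Combining these identities with the gauge condition in the form
\[
(\xi_\mathsf{F}-\mathrm{i})f^{**}_{x*}+\langle\eta_{\mathsf{F},2},f^{**}_{y*}\rangle=0
\]
(the base-infinity version of \eqref{deltakernel22}) and with the trace-free relations, I would repeat the three-step bookkeeping from the fiber-infinity proof: Step 1 for $f^{yy}_{**}$, Step 2 for $f^{xy}_{**}$, and Step 3 for $f^{xx}_{**}$, arriving at a $5\times 5$ linear system with the same block structure as the one displayed just before \eqref{step3_eq} but with every $\frac{|\eta|}{\xi}$-coefficient replaced by its base-infinity counterpart. The main obstacle will be verifying that this $5\times 5$ matrix remains nonsingular after the complexification $\tilde{S}\mapsto(\xi_\mathsf{F}\pm\mathrm{i})\frac{\epsilon|\eta_\mathsf{F}|}{\xi_\mathsf{F}^{2}+1}$; since the fiber-infinity determinant is a nonzero polynomial in $|\eta|/\xi$, its semiclassical analogue is a nonzero rational function of $(\xi_\mathsf{F},|\eta_\mathsf{F}|)$ and the only points where it could degenerate are at $\eta_\mathsf{F}=0$, which are handled separately (analogously to \textit{Case 2} in the $T_2$ argument) by choosing $\hat{Y}$ arbitrary. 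Once all components of $f$ on the orthonormal basis $\{\hat{\eta}_\mathsf{F},\hat{\eta}_\mathsf{F}^\perp\}$ vanish, we conclude $f=0$, giving ellipticity at base infinity modulo the $\delta^\mathcal{B}_\mathsf{F}$-kernel for $\mathsf{F}$ sufficiently large.
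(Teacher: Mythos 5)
Your proposal follows essentially the same approach as the paper's proof: replace $\chi$ by a Gaussian to get the semiclassical principal symbol as an $\mathbb{S}^1$-integral of $(\xi_\mathsf{F}^2+1)^{-1/2}\mathcal{M}_{22}e^{-(\hat{Y}\cdot\eta_\mathsf{F})^2/(2h\phi_\mathsf{F})}$, with $\mathcal{M}_{22}$ obtained from the fiber-infinity matrix by $\tilde{S}\mapsto(\xi_\mathsf{F}\pm\mathrm{i})\frac{\hat{Y}\cdot\eta_\mathsf{F}}{\xi_\mathsf{F}^2+1}$; start with $\hat{Y}=\hat{\eta}_\mathsf{F}^\perp$; then take $\hat{Y}=\epsilon\hat{\eta}_\mathsf{F}+\sqrt{1-\epsilon^2}\hat{\eta}_\mathsf{F}^\perp$ and repeat the three-step $\epsilon$-expansion, feeding in the gauge identities $(\xi_\mathsf{F}-\mathrm{i})f^{**}_{x*}+\langle\eta_\mathsf{F},f^{**}_{y*}\rangle_2=0$ and the trace-free conditions, arriving at a $5\times5$ system from which all components of $f$ vanish. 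This is exactly what the paper does, with $\mathfrak{C}=(\xi_\mathsf{F}-\mathrm{i})\frac{|\eta_\mathsf{F}|}{\xi_\mathsf{F}^2+1}$ playing the role of $-\frac{\epsilon|\eta|}{\xi}\cdot\epsilon^{-1}$.

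One point worth tightening: you justify nonsingularity of the final $5\times5$ system by saying the fiber-infinity determinant is a nonzero polynomial in $|\eta|/\xi$, so the base-infinity analogue is a nonzero rational function of $(\xi_\mathsf{F},|\eta_\mathsf{F}|)$, which "could only degenerate at $\eta_\mathsf{F}=0$." That inference is not sound as stated. A nonzero rational function can vanish on positive-dimensional sets, and moreover the two matrices are not literally related by a single substitution — the fiber-infinity matrix carries odd powers of $|\eta|/\xi$ in one of the rows, while the base-infinity matrix has only integer powers of $t=\frac{|\eta_\mathsf{F}|^2}{\xi_\mathsf{F}^2+1}$ (the odd-power row got divided through by the common factor coming from the $(\xi_\mathsf{F}-\mathrm{i})$-structure). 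The correct thing to do is observe that the base-infinity matrix depends only on the single real parameter $t\geq 0$, compute its determinant as a polynomial in $t$, and check directly that it has no nonnegative real roots. This is a routine but necessary computation, not something inherited for free from the fiber-infinity case.
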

\begin{proof}
The (semiclassical) principal symbol of $N_\mathsf{F}$ at the scattering front face is
\[
\int_{\mathbb{S}^{n-2}} (\xi_\mathsf{F}^2+1)^{-1/2}\mathcal{M}_{22}e^{-(\hat{Y}\cdot\eta_\mathsf{F})^2/(2h\phi_\mathsf{F}(\xi_\mathsf{F},\hat{Y}))}\mathrm{d}\hat{Y},
\]
where
 \[
{\tiny
\begin{split}
\mathcal{M}_{22}=& \left(\begin{array}{ccc}
(\xi_\mathsf{F}+\mathrm{i})^2\left(\frac{\hat{Y}\cdot\eta_\mathsf{F}}{\xi_\mathsf{F}^2+1}\right)^2 &&\\
-(\xi_\mathsf{F}+\mathrm{i})\left(\frac{\hat{Y}\cdot\eta_\mathsf{F}}{\xi_\mathsf{F}^2+1}\right)\hat{Y}_2 &&\\
\hat{Y}_2\otimes\hat{Y}_2&&\\
&(\xi_\mathsf{F}+\mathrm{i})^2\left(\frac{\hat{Y}\cdot\eta_\mathsf{F}}{\xi_\mathsf{F}^2+1}\right)^2 &\\
&-(\xi_\mathsf{F}+\mathrm{i})\left(\frac{\hat{Y}\cdot\eta_\mathsf{F}}{\xi_\mathsf{F}^2+1}\right)\hat{Y}_2 &\\
&\hat{Y}_2\otimes\hat{Y}_2&\\
&&(\xi_\mathsf{F}+\mathrm{i})^2\left(\frac{\hat{Y}\cdot\eta_\mathsf{F}}{\xi_\mathsf{F}^2+1}\right)^2 \\
&&-(\xi_\mathsf{F}+\mathrm{i})\left(\frac{\hat{Y}\cdot\eta_\mathsf{F}}{\xi_\mathsf{F}^2+1}\right)\hat{Y}_2 \\
&&\hat{Y}_2\otimes\hat{Y}_2
 \end{array}
 \right)\\
&\left(\begin{array}{ccc}
1 & 0 &\\
2(\xi_\mathsf{F}+\mathrm{i})\frac{\hat{Y}\cdot\eta_\mathsf{F}}{\xi_\mathsf{F}^2+1}\hat{Y}_1&I-\hat{Y}_1\langle\hat{Y}_1,\,\cdot\,\rangle& 0\\
(\xi_\mathsf{F}+\mathrm{i})^2\left(\frac{\hat{Y}\cdot\eta_\mathsf{F}}{\xi_\mathsf{F}^2+1}\right)^2\hat{Y}_1\otimes\hat{Y}_1&(\xi_\mathsf{F}+\mathrm{i})\frac{\hat{Y}\cdot\eta_\mathsf{F}}{\xi_\mathsf{F}^2+1}\left(\hat{Y}_1\otimes^s-\hat{Y}_1\otimes\hat{Y}_1\langle\hat{Y}_1,\,\cdot\,\rangle\right)& I-2\hat{Y}_1\otimes^s\langle\hat{Y}_1,\,\cdot\,\rangle+\hat{Y}_1\otimes\hat{Y}_1\langle\hat{Y}_1\otimes\hat{Y}_1,\,\cdot\,\rangle
\end{array}
\right)\\
&\left(\begin{array}{ccc}
1 & 2(\xi_\mathsf{F}-\mathrm{i})\frac{\hat{Y}_1\cdot\eta_\mathsf{F}}{\xi_\mathsf{F}^2+1}\langle\hat{Y}_1,\,\cdot\,\rangle &(\xi_\mathsf{F}-\mathrm{i})^2\left(\frac{\hat{Y}\cdot\eta_\mathsf{F}}{\xi_\mathsf{F}^2+1}\right)^2\langle\hat{Y}_1\otimes\hat{Y}_1,\,\cdot\,\rangle\\
0&I-\hat{Y}_1\langle\hat{Y}_1,\,\cdot\,\rangle& (\xi_\mathsf{F}-\mathrm{i})\frac{\hat{Y}\cdot\eta_\mathsf{F}}{\xi_\mathsf{F}^2+1}\left(\langle\hat{Y}_1,\,\cdot\,\rangle-\hat{Y}_1\langle\hat{Y}_1\otimes\hat{Y}_1,\,\cdot\,\rangle\right)\\
0&0& I-2\hat{Y}_1\otimes^s\langle\hat{Y}_1,\,\cdot\,\rangle+\hat{Y}_1\otimes\hat{Y}_1\langle\hat{Y}_1\otimes\hat{Y}_1,\,\cdot\,\rangle
\end{array}
\right)\\
& \left(\begin{array}{ccc}
(\xi_\mathsf{F}-\mathrm{i})^2\left(\frac{\hat{Y}\cdot\eta_\mathsf{F}}{\xi_\mathsf{F}^2+1}\right)^2 &&\\
-2(\xi_\mathsf{F}-\mathrm{i})\left(\frac{\hat{Y}\cdot\eta_\mathsf{F}}{\xi_\mathsf{F}^2+1}\right)\langle\hat{Y}_2,\cdot\rangle &&\\
\langle\hat{Y}_2\otimes\hat{Y}_2,\cdot\rangle&&\\
&(\xi_\mathsf{F}-\mathrm{i})^2\left(\frac{\hat{Y}\cdot\eta_\mathsf{F}}{\xi_\mathsf{F}^2+1}\right)^2 &\\
&-2(\xi_\mathsf{F}-\mathrm{i})\left(\frac{\hat{Y}\cdot\eta_\mathsf{F}}{\xi_\mathsf{F}^2+1}\right)\langle\hat{Y}_2,\cdot\rangle &\\
&\langle\hat{Y}_2\otimes\hat{Y}_2,\cdot\rangle&\\
&&(\xi_\mathsf{F}-\mathrm{i})^2\left(\frac{\hat{Y}\cdot\eta_\mathsf{F}}{\xi_\mathsf{F}^2+1}\right)^2 \\
&&-2(\xi_\mathsf{F}-\mathrm{i})\left(\frac{\hat{Y}\cdot\eta_\mathsf{F}}{\xi_\mathsf{F}^2+1}\right)\langle\hat{Y}_2,\cdot\rangle \\
&&\langle\hat{Y}_2\otimes\hat{Y}_2,\cdot\rangle
 \end{array}
 \right)^T.
\end{split}
}
\]
We restrict $f$ to be in the kernel of the principal symbol of $\delta^\mathcal{B}_\mathsf{F}$. So
\begin{equation}\label{deltakernel22_ff}
\begin{split}
(\xi_\mathsf{F}-\mathrm{i}) f^{xx}_{xx}+\langle\eta_\mathsf{F},f^{xx}_{xy}\rangle_2=0,\\
(\xi_\mathsf{F}-\mathrm{i}) f^{xy}_{xx}+\langle\eta_\mathsf{F},f^{xy}_{xy}\rangle_2=0,\\
(\xi_\mathsf{F}-\mathrm{i}) f^{yy}_{xx}+\langle\eta_\mathsf{F},f^{yy}_{xy}\rangle_2=0,\\
(\xi_\mathsf{F}-\mathrm{i})f^{xx}_{xy}+\langle\eta_\mathsf{F},f^{xx}_{yy}\rangle_2=0,\\
(\xi_\mathsf{F}-\mathrm{i}) f^{xy}_{xy}+\langle\eta_\mathsf{F},f^{xy}_{yy}\rangle_2=0,\\
(\xi_\mathsf{F}-\mathrm{i})f^{yy}_{xy}+\langle\eta_\mathsf{F},f^{yy}_{yy}\rangle_2=0.
\end{split}
\end{equation}
Take $\hat{Y}=\hat{\eta}^\perp_\mathsf{F}$ in above, we obtain
\begin{eqnarray}
\langle\hat{\eta}_\mathsf{F}^\perp\otimes\hat{\eta}_\mathsf{F}^\perp,f^{xx}_{yy}\rangle=0,\label{S0_identity_1_F}\\
\langle\eta_\mathsf{F}\otimes\hat{\eta}_\mathsf{F}^\perp\otimes \hat{\eta}_\mathsf{F}^\perp,f^{xy}_{yy}\rangle=0,\label{S0_identity_2_F}\\
\langle\eta_\mathsf{F}\otimes\eta_\mathsf{F}\otimes\hat{\eta}_\mathsf{F}^\perp\otimes \hat{\eta}_\mathsf{F}^\perp,f^{yy}_{yy}\rangle=0.\label{S0_identity_3_F}
\end{eqnarray}

If $f$ is in the kernel of $\mathcal{M}_{22}$, we have
\[
\begin{split}
&\left(\begin{array}{ccc}
1 & 2(\xi_\mathsf{F}-\mathrm{i})\frac{\hat{Y}\cdot\eta_\mathsf{F}}{\xi_\mathsf{F}^2+1}\langle\hat{Y}_1,\,\cdot\,\rangle &(\xi_\mathsf{F}-\mathrm{i})^2\left(\frac{\hat{Y}\cdot\eta_\mathsf{F}}{\xi_\mathsf{F}^2+1}\right)^2\langle\hat{Y}_1\otimes\hat{Y}_1,\,\cdot\,\rangle\\
0&I-\hat{Y}_1\langle\hat{Y}_1,\,\cdot\,\rangle& (\xi_\mathsf{F}-\mathrm{i})\frac{\hat{Y}\cdot\eta_\mathsf{F}}{\xi_\mathsf{F}^2+1}\left(\langle\hat{Y}_1,\,\cdot\,\rangle-\hat{Y}_1\langle\hat{Y}_1\otimes\hat{Y}_1,\,\cdot\,\rangle\right)\\
0&0& I-2\hat{Y}_1\otimes^s\langle\hat{Y}_1,\,\cdot\,\rangle+\hat{Y}_1\otimes\hat{Y}_1\langle\hat{Y}_1\otimes\hat{Y}_1,\,\cdot\,\rangle
\end{array}
\right)\\
&\left(\begin{array}{c}
(\xi_\mathsf{F}-\mathrm{i})^2\left(\frac{\hat{Y}\cdot\eta_\mathsf{F}}{\xi_\mathsf{F}^2+1}\right)^2f^{xx}_{xx}-2(\xi_\mathsf{F}-\mathrm{i})\left(\frac{\hat{Y}\cdot\eta_\mathsf{F}}{\xi_\mathsf{F}^2+1}\right)\langle\hat{Y}_2,f^{xx}_{xy}\rangle+\langle\hat{Y}_2\otimes\hat{Y}_2,f^{xx}_{yy}\rangle\\
(\xi_\mathsf{F}-\mathrm{i})^2\left(\frac{\hat{Y}\cdot\eta_\mathsf{F}}{\xi_\mathsf{F}^2+1}\right)^2f^{xy}_{xx}-2(\xi_\mathsf{F}-\mathrm{i})\left(\frac{\hat{Y}\cdot\eta_\mathsf{F}}{\xi_\mathsf{F}^2+1}\right)\langle\hat{Y}_2,f^{xy}_{xy}\rangle+\langle\hat{Y}_2\otimes\hat{Y}_2,f^{xy}_{yy}\rangle\\
(\xi_\mathsf{F}-\mathrm{i})^2\left(\frac{\hat{Y}\cdot\eta_\mathsf{F}}{\xi_\mathsf{F}^2+1}\right)^2f^{yy}_{xx}-2(\xi_\mathsf{F}-\mathrm{i})\left(\frac{\hat{Y}\cdot\eta_\mathsf{F}}{\xi_\mathsf{F}^2+1}\right)\langle\hat{Y}_2,f^{yy}_{xy}\rangle+\langle\hat{Y}_2\otimes\hat{Y}_2,f^{yy}_{yy}\rangle
\end{array}
\right)=0.
\end{split}
\]

Applying
\[
\left(\begin{array}{ccc}
1 & &\\
&\langle\hat{Y}^\perp,\,\cdot\,\rangle&\\
&&\langle \hat{Y}^\perp\otimes\hat{Y}^\perp,\,\cdot\,\rangle
\end{array}
\right)
\]
from the left side of the above matrix, we get
\[
\begin{split}
&\left(\begin{array}{ccc}
1 & 2(\xi_\mathsf{F}-\mathrm{i})\frac{\hat{Y}\cdot\eta_\mathsf{F}}{\xi_\mathsf{F}^2+1}\langle\hat{Y},\,\cdot\,\rangle &(\xi_\mathsf{F}-\mathrm{i})^2\left(\frac{\hat{Y}\cdot\eta_\mathsf{F}}{\xi_\mathsf{F}^2+1}\right)^2\langle\hat{Y}\otimes\hat{Y},\,\cdot\,\rangle\\
0&\langle\hat{Y}^\perp,\,\cdot\,\rangle& (\xi_\mathsf{F}-\mathrm{i})\frac{\hat{Y}\cdot\eta_\mathsf{F}}{\xi_\mathsf{F}^2+1}\langle\hat{Y}^\perp\otimes\hat{Y},\,\cdot\,\rangle\\
0&0& \langle\hat{Y}^\perp\otimes\hat{Y}^\perp\otimes\hat{Y}\otimes\hat{Y},\,\cdot\,\rangle
\end{array}
\right)\\
&\left(\begin{array}{c}
(\xi_\mathsf{F}-\mathrm{i})^2\left(\frac{\hat{Y}\cdot\eta_\mathsf{F}}{\xi_\mathsf{F}^2+1}\right)^2f^{xx}_{xx}-2(\xi_\mathsf{F}-\mathrm{i})\left(\frac{\hat{Y}\cdot\eta_\mathsf{F}}{\xi_\mathsf{F}^2+1}\right)\langle\hat{Y},f^{xx}_{xy}\rangle+\langle\hat{Y}\otimes\hat{Y},f^{xx}_{yy}\rangle\\
(\xi_\mathsf{F}-\mathrm{i})^2\left(\frac{\hat{Y}\cdot\eta_\mathsf{F}}{\xi_\mathsf{F}^2+1}\right)^2f^{xy}_{xx}-2(\xi_\mathsf{F}-\mathrm{i})\left(\frac{\hat{Y}\cdot\eta_\mathsf{F}}{\xi_\mathsf{F}^2+1}\right)\langle\hat{Y},f^{xy}_{xy}\rangle+\langle\hat{Y}\otimes\hat{Y},f^{xy}_{yy}\rangle\\
(\xi_\mathsf{F}-\mathrm{i})^2\left(\frac{\hat{Y}\cdot\eta_\mathsf{F}}{\xi_\mathsf{F}^2+1}\right)^2f^{yy}_{xx}-2(\xi_\mathsf{F}-\mathrm{i})\left(\frac{\hat{Y}\cdot\eta_\mathsf{F}}{\xi_\mathsf{F}^2+1}\right)\langle\hat{Y},f^{yy}_{xy}\rangle+\langle\hat{Y}\otimes\hat{Y},f^{yy}_{yy}\rangle
\end{array}
\right)=0.
\end{split}
\]

The case $\eta_\mathsf{F}=0$ is easy, so we only consider the case $\eta_\mathsf{F}\neq 0$.
Now we take $\hat{Y}=\epsilon\hat{\eta}_\mathsf{F}+\sqrt{1-\epsilon^2}\hat{\eta}_\mathsf{F}^\perp$,  and $\hat{Y}^\perp=\epsilon\hat{\eta}^\perp_\mathsf{F}-\sqrt{1-\epsilon^2}\hat{\eta}_\mathsf{F}$.
 
Step 1. We start with the identity
\[
\begin{split}
(\xi_\mathsf{F}-\mathrm{i})^2\left(\frac{\hat{Y}\cdot\eta_\mathsf{F}}{\xi_\mathsf{F}^2+1}\right)^2\langle \hat{Y}^\perp\otimes\hat{Y}^\perp,f^{yy}_{xx}\rangle-2(\xi_\mathsf{F}-\mathrm{i})\left(\frac{\hat{Y}\cdot\eta_\mathsf{F}}{\xi_\mathsf{F}^2+1}\right)\langle\hat{Y}^\perp\otimes\hat{Y}^\perp\otimes\hat{Y},f^{yy}_{xy}\rangle\\
+\langle\hat{Y}^\perp\otimes\hat{Y}^\perp\otimes\hat{Y}\otimes\hat{Y},f^{yy}_{yy}\rangle=0.
\end{split}
\]
Denote $\mathfrak{C}=(\xi_\mathsf{F}-\mathrm{i})\frac{|\eta_\mathsf{F}|}{\xi_\mathsf{F}^2+1}$, we have
\begin{equation}\label{kernel_identity_22_ff}
{\scriptsize
\begin{split}
\epsilon^4\mathfrak{C}^2\langle\hat{\eta}_\mathsf{F}^\perp\otimes\hat{\eta}_\mathsf{F}^\perp,f^{yy}_{xx}\rangle-2\epsilon^3\sqrt{1-\epsilon^2}\mathfrak{C}^2\langle\hat{\eta}_\mathsf{F}\otimes\hat{\eta}_\mathsf{F}^\perp,f^{yy}_{xx}\rangle+\epsilon^2(1-\epsilon^2)\mathfrak{C}^2\langle\hat{\eta}_\mathsf{F}\otimes\hat{\eta}_\mathsf{F},f^{yy}_{xx}\rangle\\
-2\epsilon^4\mathfrak{C}\langle\hat{\eta}_\mathsf{F}^\perp\otimes\hat{\eta}_\mathsf{F}^\perp\otimes\hat{\eta}_\mathsf{F},f^{yy}_{xy}\rangle-2\epsilon^3\sqrt{1-\epsilon^2}\mathfrak{C}\langle\hat{\eta}_\mathsf{F}^\perp\otimes\hat{\eta}_\mathsf{F}^\perp\otimes\hat{\eta}_\mathsf{F}^\perp,f^{yy}_{xy}\rangle\\
+4\epsilon^3\sqrt{1-\epsilon^2}\mathfrak{C}\langle\hat{\eta}_\mathsf{F}\otimes\hat{\eta}_\mathsf{F}^\perp\otimes\hat{\eta}_\mathsf{F},f^{yy}_{xy}\rangle+4\epsilon^2(1-\epsilon^2)\mathfrak{C}\langle\hat{\eta}_\mathsf{F}\otimes\hat{\eta}_\mathsf{F}^\perp\otimes\hat{\eta}_\mathsf{F}^\perp,f^{yy}_{xy}\rangle\\
-2\epsilon^2(1-\epsilon^2)\mathfrak{C}\langle\hat{\eta}_\mathsf{F}\otimes\hat{\eta}_\mathsf{F}\otimes\hat{\eta}_\mathsf{F},f^{yy}_{xy}\rangle-2\epsilon\sqrt{(1-\epsilon^2)^3}\mathfrak{C}\langle\hat{\eta}_\mathsf{F}\otimes\hat{\eta}_\mathsf{F}\otimes\hat{\eta}_\mathsf{F}^\perp,f^{yy}_{xy}\rangle\\
+\epsilon^4\langle\hat{\eta}_\mathsf{F}^\perp\otimes\hat{\eta}_\mathsf{F}^\perp\otimes\hat{\eta}_\mathsf{F}\otimes{\eta}_\mathsf{F},f^{yy}_{yy}\rangle+2\epsilon^3\sqrt{1-\epsilon^2}\langle\hat{\eta}_\mathsf{F}^\perp\otimes\hat{\eta}_\mathsf{F}^\perp\otimes\hat{\eta}_\mathsf{F}\otimes{\eta}_\mathsf{F}^\perp,f^{yy}_{yy}\rangle+\epsilon^2(1-\epsilon^2)\langle\hat{\eta}_\mathsf{F}^\perp\otimes\hat{\eta}_\mathsf{F}^\perp\otimes\hat{\eta}^\perp_\mathsf{F}\otimes{\eta}^\perp_\mathsf{F},f^{yy}_{yy}\rangle\\
-2\epsilon^3\sqrt{1-\epsilon^2}\langle\hat{\eta}_\mathsf{F}\otimes\hat{\eta}^\perp_\mathsf{F}\otimes\hat{\eta}_\mathsf{F}\otimes{\eta}_\mathsf{F},f^{yy}_{yy}\rangle-4\epsilon^2(1-\epsilon^2)\langle\hat{\eta}_\mathsf{F}\otimes\hat{\eta}^\perp_\mathsf{F}\otimes\hat{\eta}_\mathsf{F}\otimes{\eta}_\mathsf{F}^\perp,f^{yy}_{yy}\rangle-2\epsilon\sqrt{(1-\epsilon^2)^3}\langle\hat{\eta}_\mathsf{F}\otimes\hat{\eta}^\perp_\mathsf{F}\otimes\hat{\eta}^\perp_\mathsf{F}\otimes{\eta}^\perp_\mathsf{F},f^{yy}_{yy}\rangle\\
+\epsilon^2(1-\epsilon^2)\langle\hat{\eta}_\mathsf{F}\otimes\hat{\eta}_\mathsf{F}\otimes\hat{\eta}_\mathsf{F}\otimes{\eta}_\mathsf{F},f^{yy}_{yy}\rangle+2\epsilon\sqrt{(1-\epsilon^2)^3}\langle\hat{\eta}_\mathsf{F}\otimes\hat{\eta}_\mathsf{F}\otimes\hat{\eta}_\mathsf{F}\otimes{\eta}_\mathsf{F}^\perp,f^{yy}_{yy}\rangle+(1-\epsilon^2)^2\langle\hat{\eta}_\mathsf{F}\otimes\hat{\eta}_\mathsf{F}\otimes\hat{\eta}^\perp_\mathsf{F}\otimes{\eta}^\perp_\mathsf{F},f^{yy}_{yy}\rangle=0.
\end{split}
}
\end{equation}

Taking 1st order derivative of \eqref{kernel_identity_22_ff} in $\epsilon$ at $\epsilon=0$, we obtain
\[
-2\mathfrak{C}\langle\hat{\eta}_\mathsf{F}\otimes\hat{\eta}_\mathsf{F}\otimes\hat{\eta}_\mathsf{F}^\perp, f^{yy}_{xy}\rangle+2\langle\hat{\eta}_\mathsf{F}\otimes\hat{\eta}_\mathsf{F}\otimes\hat{\eta}_\mathsf{F}\otimes\hat{\eta}_\mathsf{F}^\perp,f^{yy}_{yy}\rangle-2\langle\hat{\eta}_\mathsf{F}\otimes\hat{\eta}^\perp_\mathsf{F}\otimes \hat{\eta}^\perp_\mathsf{F}\otimes\hat{\eta}^\perp_\mathsf{F},f^{yy}_{yy}\rangle=0.
\]
Using the condition in \eqref{deltakernel22_ff}, the above equality reduces to
\begin{equation}\label{eq_51}
(\frac{|\eta_\mathsf{F}|^2}{\xi_\mathsf{F}^2+1}+1)\langle\hat{\eta}_\mathsf{F}\otimes\hat{\eta}_\mathsf{F}\otimes\hat{\eta}_\mathsf{F}\otimes\hat{\eta}_\mathsf{F}^\perp,f^{yy}_{yy}\rangle-\langle\hat{\eta}_\mathsf{F}\otimes\hat{\eta}^\perp_\mathsf{F}\otimes \hat{\eta}^\perp_\mathsf{F}\otimes\hat{\eta}^\perp_\mathsf{F},f^{yy}_{yy}\rangle=0.
\end{equation}

Taking 3rd order derivative and using the above identity
and using \eqref{deltakernel22_ff}, we have
\begin{equation}\label{eq_52}
\left(\frac{|\eta_\mathsf{F}|^4}{(\xi_\mathsf{F}^2+1)^2}+2\frac{|\eta_\mathsf{F}|^2}{\xi_\mathsf{F}^2+1}+1\right)\langle\hat{\eta}_\mathsf{F}\otimes\hat{\eta}^\perp_\mathsf{F}\otimes\hat{\eta}_\mathsf{F}\otimes\hat{\eta}_\mathsf{F},f^{yy}_{yy}\rangle-\left(\frac{|\eta_\mathsf{F}|^2}{\xi_\mathsf{F}^2+1}+1\right)\langle\hat{\eta}_\mathsf{F}^\perp\otimes\hat{\eta}_\mathsf{F}^\perp\otimes\hat{\eta}_\mathsf{F}\otimes{\eta}_\mathsf{F}^\perp,f^{yy}_{yy}\rangle=0.
\end{equation}

Taking 2nd order derivative, together with \eqref{deltakernel22_ff}, gives
\begin{equation}\label{eq_53}
\begin{split}
\left(\frac{|\eta_\mathsf{F}|^4}{(\xi_\mathsf{F}^2+1)^2}+2\frac{|\eta_\mathsf{F}|^2}{\xi_\mathsf{F}^2+1}+1\right)\langle\hat{\eta}_\mathsf{F}\otimes\hat{\eta}_\mathsf{F}\otimes\hat{\eta}_\mathsf{F}\otimes\hat{\eta}_\mathsf{F},f^{yy}_{yy}\rangle-4\left(\frac{|\eta_\mathsf{F}|^2}{\xi_\mathsf{F}^2+1}+1\right)\langle\hat{\eta}_\mathsf{F}\otimes\hat{\eta}^\perp_\mathsf{F}\otimes\hat{\eta}_\mathsf{F}\otimes\hat{\eta}^\perp_\mathsf{F}, f^{yy}_{yy}\rangle\\
+\langle\hat{\eta}^\perp_\mathsf{F}\otimes\hat{\eta}^\perp_\mathsf{F}\otimes\hat{\eta}^\perp_\mathsf{F}\otimes\hat{\eta}^\perp_\mathsf{F}, f^{yy}_{yy}\rangle=0.
\end{split}
\end{equation}

Taking 4th order derivative, together with \eqref{deltakernel22_ff}, we obtain
\[
\left(\frac{|\eta_\mathsf{F}|^4}{(\xi_\mathsf{F}^2+1)^2}+2\frac{|\eta_\mathsf{F}|^2}{\xi_\mathsf{F}^2+1}+1\right)\langle\hat{\eta}^\perp_\mathsf{F}\otimes\hat{\eta}^\perp_\mathsf{F}\otimes\hat{\eta}_\mathsf{F}\otimes\hat{\eta}_\mathsf{F}, f^{yy}_{yy}\rangle=0,
\]
and then
\[
\langle\hat{\eta}^\perp_\mathsf{F}\otimes\hat{\eta}^\perp_\mathsf{F}, f^{yy}_{xx}\rangle=\langle\hat{\eta}^\perp_\mathsf{F}\otimes\hat{\eta}^\perp_\mathsf{F}\otimes\hat{\eta}_\mathsf{F}, f^{yy}_{xy}\rangle=\langle\hat{\eta}^\perp_\mathsf{F}\otimes\hat{\eta}^\perp_\mathsf{F}\otimes\hat{\eta}_\mathsf{F}\otimes\hat{\eta}_\mathsf{F}, f^{yy}_{yy}\rangle=0.
\]

Step 2. We consider the identity
\[
\begin{split}
(\xi_\mathsf{F}-\mathrm{i})^2\left(\frac{\hat{Y}\cdot\eta_\mathsf{F}}{\xi_\mathsf{F}^2+1}\right)^2\langle\hat{Y}^\perp,f^{xy}_{xx}\rangle-2(\xi_\mathsf{F}-\mathrm{i})\left(\frac{\hat{Y}\cdot\eta_\mathsf{F}}{\xi_\mathsf{F}^2+1}\right)\langle\hat{Y}^\perp\otimes\hat{Y},f^{xy}_{xy}\rangle+\langle\hat{Y}^\perp\otimes\hat{Y}\otimes\hat{Y},f^{xy}_{yy}\rangle\\
+(\xi_\mathsf{F}-\mathrm{i})^3\left(\frac{\hat{Y}\cdot\eta_\mathsf{F}}{\xi_\mathsf{F}^2+1}\right)^3\langle\hat{Y}^\perp\otimes\hat{Y},f^{yy}_{xx}\rangle-2(\xi_\mathsf{F}-\mathrm{i})^2\left(\frac{\hat{Y}\cdot\eta_\mathsf{F}}{\xi_\mathsf{F}^2+1}\right)^2\langle\hat{Y}^\perp\otimes\hat{Y}\otimes\hat{Y},f^{yy}_{xy}\rangle\\
+(\xi_\mathsf{F}-\mathrm{i})\left(\frac{\hat{Y}\cdot\eta_\mathsf{F}}{\xi_\mathsf{F}^2+1}\right)\langle\hat{Y}^\perp\otimes\hat{Y}\otimes\hat{Y}\otimes\hat{Y},f^{yy}_{yy}\rangle=0.
\end{split}
\]

Similar as above, we obtain
\begin{equation}\label{kernel_identity_22_2_ff}
\begin{split}
\epsilon^3\mathfrak{C}^2\langle\hat{\eta}_\mathsf{F}^\perp,f^{xy}_{xx}\rangle-\epsilon^2\sqrt{1-\epsilon^2}\mathfrak{C}^2\langle\hat{\eta}_\mathsf{F},f^{xy}_{xx}\rangle\\
-2\epsilon^3\mathfrak{C}\langle\hat{\eta}^\perp_\mathsf{F}\otimes\hat{\eta}_\mathsf{F},f^{xy}_{xy}\rangle-2\epsilon^2\sqrt{1-\epsilon^2}\mathfrak{C}\langle\hat{\eta}^\perp_\mathsf{F}\otimes\hat{\eta}^\perp_\mathsf{F}-\hat{\eta}_\mathsf{F}\otimes\hat{\eta}_\mathsf{F},f^{xy}_{xy}\rangle+2\epsilon(1-\epsilon^2)\mathfrak{C}\langle\hat{\eta}_\mathsf{F}\otimes\hat{\eta}_\mathsf{F}^\perp, f^{xy}_{xy}\rangle\\
+\epsilon^3\langle\hat{\eta}^\perp_\mathsf{F}\otimes\hat{\eta}_\mathsf{F}\otimes\hat{\eta}_\mathsf{F},f^{xy}_{yy}\rangle-\epsilon^2\sqrt{1-\epsilon^2}\langle \hat{\eta}_\mathsf{F}\otimes\hat{\eta}_\mathsf{F}\otimes\hat{\eta}_\mathsf{F},f^{xy}_{yy}\rangle
+2\epsilon^2\sqrt{1-\epsilon^2}\langle \hat{\eta}_\mathsf{F}^\perp\otimes\hat{\eta}_\mathsf{F}\otimes\hat{\eta}_\mathsf{F}^\perp,f^{xy}_{yy}\rangle\\
+\epsilon(1-\epsilon^2)\langle\hat{\eta}^\perp_\mathsf{F}\otimes \hat{\eta}^\perp_\mathsf{F}\otimes\hat{\eta}^\perp_\mathsf{F},f^{xy}_{yy}\rangle-2\epsilon(1-\epsilon^2)\langle\hat{\eta}_\mathsf{F}\otimes \hat{\eta}_\mathsf{F}\otimes\hat{\eta}^\perp_\mathsf{F},f^{xy}_{yy}\rangle
-\sqrt{(1-\epsilon^2)^3}\langle\hat{\eta}_\mathsf{F}\otimes\hat{\eta}_\mathsf{F}^\perp\otimes\hat{\eta}_\mathsf{F}^\perp,f^{xy}_{yy}\rangle\\
+\epsilon^5\mathfrak{C}^3\langle\hat{\eta}_\mathsf{F}^\perp\otimes\hat{\eta}_\mathsf{F},f^{yy}_{xx}\rangle+\epsilon^4\sqrt{1-\epsilon^2}\mathfrak{C}^3\langle\hat{\eta}^\perp_\mathsf{F}\otimes\hat{\eta}_\mathsf{F}^\perp-\hat{\eta}_\mathsf{F}\otimes\hat{\eta}_\mathsf{F},f^{yy}_{xx}\rangle-\epsilon^3(1-\epsilon^2)\mathfrak{C}^3\langle\hat{\eta}_\mathsf{F}\otimes\hat{\eta}_\mathsf{F}^\perp, f^{yy}_{xx}\rangle\\
-2\epsilon^5\mathfrak{C}^2\langle\hat{\eta}^\perp_\mathsf{F}\otimes\hat{\eta}_\mathsf{F}\otimes\hat{\eta}_\mathsf{F},f^{yy}_{xy}\rangle+2\epsilon^4\sqrt{1-\epsilon^2}\mathfrak{C}^2\langle \hat{\eta}_\mathsf{F}\otimes\hat{\eta}_\mathsf{F}\otimes\hat{\eta}_\mathsf{F},f^{yy}_{xy}\rangle
-2\epsilon^4\sqrt{1-\epsilon^2}\mathfrak{C}^2\langle \hat{\eta}_\mathsf{F}^\perp\otimes\hat{\eta}_\mathsf{F}\otimes\hat{\eta}_\mathsf{F}^\perp,f^{yy}_{xy}\rangle\\
-2\epsilon^4\sqrt{1-\epsilon^2}\mathfrak{C}^2\langle \hat{\eta}_\mathsf{F}^\perp\otimes\hat{\eta}_\mathsf{F}^\perp\otimes\hat{\eta}_\mathsf{F},f^{yy}_{xy}\rangle
-2\epsilon^3(1-\epsilon^2)\mathfrak{C}^2\langle\hat{\eta}^\perp_\mathsf{F}\otimes \hat{\eta}^\perp_\mathsf{F}\otimes\hat{\eta}^\perp_\mathsf{F},f^{yy}_{xy}\rangle\\
+2\epsilon^3(1-\epsilon^2)\mathfrak{C}^2\langle\hat{\eta}_\mathsf{F}\otimes \hat{\eta}_\mathsf{F}\otimes\hat{\eta}^\perp_\mathsf{F},f^{yy}_{xy}\rangle+2\epsilon^3(1-\epsilon^2)\mathfrak{C}^2\langle\hat{\eta}_\mathsf{F}\otimes \hat{\eta}^\perp_\mathsf{F}\otimes\hat{\eta}_\mathsf{F},f^{yy}_{xy}\rangle\\
+2\epsilon^2\sqrt{(1-\epsilon^2)^3}\mathfrak{C}^2\langle\hat{\eta}_\mathsf{F}\otimes\hat{\eta}_\mathsf{F}^\perp\otimes\hat{\eta}_\mathsf{F}^\perp,f^{yy}_{xy}\rangle
+(-\epsilon^3(1-\epsilon^2)+\epsilon^5)\mathfrak{C}\langle\hat{\eta}_\mathsf{F}\otimes\hat{\eta}_\mathsf{F}^\perp\otimes\hat{\eta}_\mathsf{F}\otimes\hat{\eta}_\mathsf{F},f^{yy}_{yy}\rangle\\
-\epsilon^4\sqrt{1-\epsilon^2}\mathfrak{C}\langle\hat{\eta}_\mathsf{F}\otimes\hat{\eta}_\mathsf{F}\otimes\hat{\eta}_\mathsf{F}\otimes\hat{\eta}_\mathsf{F},f^{yy}_{yy}\rangle+\epsilon^4\sqrt{1-\epsilon^2}\mathfrak{C}\langle\hat{\eta}^\perp_\mathsf{F}\otimes\hat{\eta}^\perp_\mathsf{F}\otimes\hat{\eta}_\mathsf{F}\otimes\hat{\eta}_\mathsf{F},f^{yy}_{yy}\rangle\\
+(-2\epsilon^2(1-\epsilon^2)+2\epsilon^4)\sqrt{1-\epsilon^2}\mathfrak{C}\langle\hat{\eta}_\mathsf{F}\otimes\hat{\eta}_\mathsf{F}^\perp\otimes\hat{\eta}_\mathsf{F}\otimes\hat{\eta}^\perp_\mathsf{F},f^{yy}_{yy}\rangle\\
-2\epsilon^3(1-\epsilon^2)\mathfrak{C}\langle\hat{\eta}_\mathsf{F}\otimes\hat{\eta}_\mathsf{F}\otimes\hat{\eta}_\mathsf{F}\otimes\hat{\eta}^\perp_\mathsf{F},f^{yy}_{yy}\rangle+2\epsilon^3(1-\epsilon^2)\mathfrak{C}\langle\hat{\eta}^\perp_\mathsf{F}\otimes\hat{\eta}^\perp_\mathsf{F}\otimes\hat{\eta}_\mathsf{F}\otimes\hat{\eta}^\perp_\mathsf{F},f^{yy}_{yy}\rangle\\
+(-\epsilon(1-\epsilon^2)^2+\epsilon^3(1-\epsilon^2))\mathfrak{C}\langle\hat{\eta}_\mathsf{F}\otimes\hat{\eta}_\mathsf{F}^\perp\otimes\hat{\eta}^\perp_\mathsf{F}\otimes\hat{\eta}^\perp_\mathsf{F},f^{yy}_{yy}\rangle-\epsilon^2\sqrt{(1-\epsilon^2)^3}\mathfrak{C}\langle\hat{\eta}_\mathsf{F}\otimes\hat{\eta}_\mathsf{F}\otimes\hat{\eta}^\perp_\mathsf{F}\otimes\hat{\eta}^\perp_\mathsf{F},f^{yy}_{yy}\rangle\\
+\epsilon^2\sqrt{(1-\epsilon^2)^3}\mathfrak{C}\langle\hat{\eta}^\perp_\mathsf{F}\otimes\hat{\eta}^\perp_\mathsf{F}\otimes\hat{\eta}^\perp_\mathsf{F}\otimes\hat{\eta}^\perp_\mathsf{F},f^{yy}_{yy}\rangle=0.
\end{split}
\end{equation}

Taking 1st order derivative of \eqref{kernel_identity_22_2_ff} in $\epsilon$ at $\epsilon=0$, and using the condition in \eqref{deltakernel22_ff}, we obtain
\begin{equation}\label{eq_55}
\begin{split}
2\left(\frac{|\eta_\mathsf{F}|^2}{\xi_\mathsf{F}^2+1}+1\right)\langle\hat{\eta}_\mathsf{F}\otimes\hat{\eta}_\mathsf{F}\otimes\hat{\eta}_\mathsf{F}^\perp,f^{xy}_{yy}\rangle-\langle\hat{\eta}^\perp_\mathsf{F}\otimes \hat{\eta}^\perp_\mathsf{F}\otimes\hat{\eta}^\perp_\mathsf{F},f^{xy}_{yy}\rangle\\
+\frac{|\eta_\mathsf{F}|}{\xi_\mathsf{F}^2+1}(\xi_\mathsf{F}-\mathrm{i})\langle\hat{\eta}_\mathsf{F}\otimes\hat{\eta}_\mathsf{F}^\perp\otimes\hat{\eta}^\perp_\mathsf{F}\otimes\hat{\eta}^\perp_\mathsf{F},f^{yy}_{yy}\rangle=0.
\end{split}
\end{equation}
Taking 3rd order derivative and using the above identity, we have
using \eqref{deltakernel22_ff}, we have
\begin{equation}\label{eq_56}
\begin{split}
\left(\frac{|\eta_\mathsf{F}|^4}{(\xi_\mathsf{F}^2+1)^2}+2\frac{|\eta_\mathsf{F}|^2}{\xi_\mathsf{F}^2+1}+1\right)\langle\hat{\eta}^\perp_\mathsf{F}\otimes\hat{\eta}_\mathsf{F}\otimes\hat{\eta}_\mathsf{F},f^{xy}_{yy}\rangle\\
-\left(\frac{|\eta_\mathsf{F}|^4}{(\xi_\mathsf{F}^2+1)^2}+2\frac{|\eta_\mathsf{F}|^2}{\xi_\mathsf{F}^2+1}+1\right)\frac{|\eta_\mathsf{F}|}{\xi_\mathsf{F}^2+1}(\xi_\mathsf{F}-\mathrm{i})\langle\hat{\eta}_\mathsf{F}\otimes\hat{\eta}^\perp_\mathsf{F}\otimes\hat{\eta}_\mathsf{F}\otimes\hat{\eta}_\mathsf{F},f^{yy}_{yy}\rangle\\
+2\left(\frac{|\eta_\mathsf{F}|^2}{\xi_\mathsf{F}^2+1}+1\right)\frac{|\eta_\mathsf{F}|}{\xi_\mathsf{F}^2+1}(\xi_\mathsf{F}-\mathrm{i})\langle\hat{\eta}_\mathsf{F}^\perp\otimes\hat{\eta}^\perp_\mathsf{F}\otimes\hat{\eta}_\mathsf{F}\otimes\hat{\eta}^\perp_\mathsf{F},f^{yy}_{yy}\rangle\\
-2\left(\frac{|\eta_\mathsf{F}|^2}{\xi_\mathsf{F}^2+1}+1\right)\frac{|\eta_\mathsf{F}|}{\xi_\mathsf{F}^2+1}(\xi_\mathsf{F}-\mathrm{i})\langle\hat{\eta}_1\otimes\hat{\eta}_\mathsf{F}\otimes\hat{\eta}_\mathsf{F}\otimes\hat{\eta}^\perp_\mathsf{F},f^{yy}_{yy}\rangle\\
+\frac{|\eta_\mathsf{F}|}{\xi_\mathsf{F}^2+1}(\xi_\mathsf{F}-\mathrm{i})\langle\hat{\eta}_\mathsf{F}\otimes\hat{\eta}^\perp_\mathsf{F}\otimes\hat{\eta}^\perp_\mathsf{F}\otimes\hat{\eta}_\mathsf{F}^\perp,f^{yy}_{yy}\rangle=0.
\end{split}
\end{equation}

Taking 5th order derivative we have
\[
\mathfrak{C}^2\langle\hat{\eta}_\mathsf{F}^\perp\otimes\hat{\eta}_\mathsf{F},f^{yy}_{xx}\rangle-2\mathfrak{C}\langle\hat{\eta}^\perp_\mathsf{F}\otimes\hat{\eta}_\mathsf{F}\otimes\hat{\eta}_\mathsf{F},f^{yy}_{xy}\rangle+\langle\hat{\eta}_\mathsf{F}\otimes\hat{\eta}_\mathsf{F}^\perp\otimes\hat{\eta}_\mathsf{F}\otimes\hat{\eta}_\mathsf{F},f^{yy}_{yy}\rangle=0.
\]
Using the gauge condition we have
\[
\left(\frac{|\eta_\mathsf{F}|^4}{(\xi_\mathsf{F}^2+1)^2}+2\frac{|\eta_\mathsf{F}|^2}{\xi_\mathsf{F}^2+1}+1\right)\langle\hat{\eta}_\mathsf{F}\otimes\hat{\eta}_\mathsf{F}^\perp\otimes\hat{\eta}_\mathsf{F}\otimes\hat{\eta}_\mathsf{F},f^{yy}_{yy}\rangle=0
\]
and so
\[
\langle\hat{\eta}_\mathsf{F}^\perp\otimes\hat{\eta}_\mathsf{F},f^{yy}_{xx}\rangle=\langle\hat{\eta}^\perp_\mathsf{F}\otimes\hat{\eta}_\mathsf{F}\otimes\hat{\eta}_\mathsf{F},f^{yy}_{xy}\rangle=\langle\hat{\eta}_\mathsf{F}\otimes\hat{\eta}_\mathsf{F}^\perp\otimes\hat{\eta}_\mathsf{F}\otimes\hat{\eta}_\mathsf{F},f^{yy}_{yy}\rangle=0.
\]
Substituting into \eqref{eq_52}, we have
\[
\langle\hat{\eta}_\mathsf{F}^\perp\otimes\hat{\eta}_\mathsf{F}^\perp\otimes{\eta}_\mathsf{F}^\perp,f^{yy}_{xy}\rangle=\langle\hat{\eta}_\mathsf{F}^\perp\otimes\hat{\eta}_\mathsf{F}^\perp\otimes\hat{\eta}_\mathsf{F}\otimes{\eta}_\mathsf{F}^\perp,f^{yy}_{yy}\rangle=0.
\]

Using the trace-free condition
\[
\langle\hat{\eta}_\mathsf{F}^\perp\otimes\hat{\eta}_\mathsf{F},f^{xy}_{xy}\rangle+\langle\hat{\eta}_\mathsf{F}\otimes\hat{\eta}_\mathsf{F}^\perp\otimes\hat{\eta}_\mathsf{F}\otimes\hat{\eta}_\mathsf{F},f^{yy}_{yy}\rangle+\langle\hat{\eta}_1^\perp\otimes\hat{\eta}_\mathsf{F}^\perp\otimes\hat{\eta}_\mathsf{F}\otimes{\eta}_\mathsf{F}^\perp,f^{yy}_{yy}\rangle=0,
\]
we have
\[
\langle\hat{\eta}_\mathsf{F}^\perp,f^{xy}_{xx}\rangle=\langle\hat{\eta}_\mathsf{F}^\perp\otimes\hat{\eta}_\mathsf{F},f^{xy}_{xy}\rangle=\langle\hat{\eta}_\mathsf{F}^\perp\otimes\hat{\eta}_\mathsf{F}\otimes\hat{\eta}_\mathsf{F},f^{xy}_{yy}\rangle=0.
\]
Substituting into \eqref{eq_56}, we have
\[
-2\left(\frac{|\eta_\mathsf{F}|^2}{\xi_\mathsf{F}^2+1}+1\right)\langle\hat{\eta}_\mathsf{F}\otimes\hat{\eta}_\mathsf{F}\otimes\hat{\eta}_\mathsf{F}\otimes\hat{\eta}^\perp_\mathsf{F},f^{yy}_{yy}\rangle\\
+\langle\hat{\eta}_\mathsf{F}\otimes\hat{\eta}^\perp_\mathsf{F}\otimes\hat{\eta}^\perp_\mathsf{F}\otimes\hat{\eta}_\mathsf{F}^\perp,f^{yy}_{yy}\rangle=0.
\]
Together with \eqref{eq_51}, we have
\[
\langle\hat{\eta}_\mathsf{F}\otimes\hat{\eta}_\mathsf{F}\otimes\hat{\eta}_\mathsf{F}\otimes\hat{\eta}^\perp_\mathsf{F},f^{yy}_{yy}\rangle=\langle\hat{\eta}_\mathsf{F}\otimes\hat{\eta}^\perp_\mathsf{F}\otimes\hat{\eta}^\perp_\mathsf{F}\otimes\hat{\eta}_\mathsf{F}^\perp,f^{yy}_{yy}\rangle=0.
\]
Using the gauge condition, we have
\[
\langle\hat{\eta}_\mathsf{F}\otimes\hat{\eta}_\mathsf{F}\otimes\hat{\eta}^\perp_\mathsf{F},f^{yy}_{xy}\rangle=0.
\]
Using the trace-free condition, we have
we have
\[
\langle\hat{\eta}_\mathsf{F}\otimes\hat{\eta}^\perp_\mathsf{F},f^{xy}_{xy}\rangle=\langle\hat{\eta}_\mathsf{F}\otimes\hat{\eta}_\mathsf{F}\otimes\hat{\eta}^\perp_\mathsf{F},f^{xy}_{yy}\rangle=0.
\]
Using \eqref{eq_55} again, we have
\[
\langle\hat{\eta}^\perp_\mathsf{F}\otimes \hat{\eta}^\perp_\mathsf{F}\otimes\hat{\eta}^\perp_\mathsf{F},f^{xy}_{yy}\rangle=0.
\]

Next we consider the even powers of $\epsilon$. Dividing by $\epsilon^2\sqrt{1-\epsilon^2}$, we obtain
\begin{equation}\label{kernel_identity_22_2_2_ff}
\begin{split}
-\mathfrak{C}^2\langle\hat{\eta}_\mathsf{F},f^{xy}_{xx}\rangle
-2\mathfrak{C}\langle\hat{\eta}^\perp_\mathsf{F}\otimes\hat{\eta}^\perp_\mathsf{F}-\hat{\eta}_\mathsf{F}\otimes\hat{\eta}_\mathsf{F},f^{xy}_{xy}\rangle
-\langle \hat{\eta}_\mathsf{F}\otimes\hat{\eta}_\mathsf{F}\otimes\hat{\eta}_\mathsf{F},f^{xy}_{yy}\rangle\\
+2\langle \hat{\eta}_1^\perp\otimes\hat{\eta}_\mathsf{F}\otimes\hat{\eta}_\mathsf{F}^\perp,f^{xy}_{yy}\rangle
+\epsilon^2\mathfrak{C}^3\langle\hat{\eta}^\perp_\mathsf{F}\otimes\hat{\eta}_\mathsf{F}^\perp-\hat{\eta}_\mathsf{F}\otimes\hat{\eta}_\mathsf{F},f^{yy}_{xx}\rangle
+2\epsilon^2\mathfrak{C}^2\langle \hat{\eta}_\mathsf{F}\otimes\hat{\eta}_\mathsf{F}\otimes\hat{\eta}_\mathsf{F},f^{yy}_{xy}\rangle\\
-2\epsilon^2\mathfrak{C}^2\langle \hat{\eta}_\mathsf{F}^\perp\otimes\hat{\eta}_\mathsf{F}\otimes\hat{\eta}_\mathsf{F}^\perp,f^{yy}_{xy}\rangle
-2\epsilon^2\mathfrak{C}^2\langle \hat{\eta}_\mathsf{F}^\perp\otimes\hat{\eta}_\mathsf{F}^\perp\otimes\hat{\eta}_\mathsf{F},f^{yy}_{xy}\rangle\\
+2(1-\epsilon^2)\mathfrak{C}^2\langle\hat{\eta}_\mathsf{F}\otimes\hat{\eta}_\mathsf{F}^\perp\otimes\hat{\eta}_\mathsf{F}^\perp,f^{yy}_{xy}\rangle
-\epsilon^2\mathfrak{C}\langle\hat{\eta}_\mathsf{F}\otimes\hat{\eta}_\mathsf{F}\otimes\hat{\eta}_\mathsf{F}\otimes\hat{\eta}_\mathsf{F},f^{yy}_{yy}\rangle\\
+\epsilon^2\mathfrak{C}\langle\hat{\eta}^\perp_\mathsf{F}\otimes\hat{\eta}^\perp_\mathsf{F}\otimes\hat{\eta}_\mathsf{F}\otimes\hat{\eta}_\mathsf{F},f^{yy}_{yy}\rangle\
-2(1-2\epsilon^2)\mathfrak{C}\langle\hat{\eta}_\mathsf{F}\otimes\hat{\eta}_\mathsf{F}^\perp\otimes\hat{\eta}_\mathsf{F}\otimes\hat{\eta}^\perp_\mathsf{F},f^{yy}_{yy}\rangle\\
-(1-\epsilon^2)\mathfrak{C}\langle\hat{\eta}_\mathsf{F}\otimes\hat{\eta}_\mathsf{F}\otimes\hat{\eta}^\perp_\mathsf{F}\otimes\hat{\eta}^\perp_\mathsf{F},f^{yy}_{yy}\rangle
+(1-\epsilon^2)\mathfrak{C}\langle\hat{\eta}^\perp_\mathsf{F}\otimes\hat{\eta}^\perp_\mathsf{F}\otimes\hat{\eta}^\perp_\mathsf{F}\otimes\hat{\eta}^\perp_\mathsf{F},f^{yy}_{yy}\rangle=0.
\end{split}
\end{equation}
Setting $\epsilon=0$, and using the gauge condition, we have
\[
\begin{split}
-\left(\frac{|\eta_\mathsf{F}|^4}{(\xi_\mathsf{F}^2+1)^2}+2\frac{|\eta_\mathsf{F}|^2}{\xi_\mathsf{F}^2+1}+1\right)\langle \hat{\eta}_\mathsf{F}\otimes\hat{\eta}_\mathsf{F}\otimes\hat{\eta}_\mathsf{F},f^{xy}_{yy}\rangle
-2\left(\frac{|\eta_\mathsf{F}|^2}{\xi_\mathsf{F}^2+1}+1\right)\langle \hat{\eta}_1^\perp\otimes\hat{\eta}_\mathsf{F}\otimes\hat{\eta}_\mathsf{F}^\perp,f^{xy}_{yy}\rangle\\
-2\left(\frac{|\eta_\mathsf{F}|^2}{\xi_\mathsf{F}^2+1}+1\right)\frac{|\eta_\mathsf{F}|}{\xi_\mathsf{F}^2+1}(\xi_\mathsf{F}-\mathrm{i})\langle\hat{\eta}_\mathsf{F}\otimes\hat{\eta}_\mathsf{F}^\perp\otimes\hat{\eta}_\mathsf{F}\otimes\hat{\eta}^\perp_\mathsf{F},f^{yy}_{yy}\rangle-\frac{|\eta_\mathsf{F}|}{\xi_\mathsf{F}^2+1}(\xi_\mathsf{F}-\mathrm{i})\langle\hat{\eta}_\mathsf{F}\otimes\hat{\eta}_\mathsf{F}\otimes\hat{\eta}^\perp_\mathsf{F}\otimes\hat{\eta}^\perp_\mathsf{F},f^{yy}_{yy}\rangle\\
+\frac{|\eta_\mathsf{F}|}{\xi_\mathsf{F}^2+1}(\xi_\mathsf{F}-\mathrm{i})\langle\hat{\eta}^\perp_\mathsf{F}\otimes\hat{\eta}^\perp_\mathsf{F}\otimes\hat{\eta}^\perp_\mathsf{F}\otimes\hat{\eta}^\perp_\mathsf{F},f^{yy}_{yy}\rangle=0,
\end{split}
\]
which can be rewritten as
\[
\begin{split}
\left(\frac{|\eta_\mathsf{F}|^4}{(\xi_\mathsf{F}^2+1)^2}+2\frac{|\eta_\mathsf{F}|^2}{\xi_\mathsf{F}^2+1}+1\right)\langle \hat{\eta}_\mathsf{F}\otimes\hat{\eta}_\mathsf{F},f^{xy}_{yy}\rangle
-2\left(\frac{|\eta_\mathsf{F}|^2}{\xi_\mathsf{F}^2+1}+1\right)\langle \hat{\eta}_1^\perp\otimes\hat{\eta}_\mathsf{F}^\perp,f^{xy}_{xy}\rangle\\
-2\left(\frac{|\eta_\mathsf{F}|^4}{(\xi_\mathsf{F}^2+1)^4}+\frac{|\eta_\mathsf{F}|^2}{\xi_\mathsf{F}^2+1}\right)\langle\hat{\eta}_\mathsf{F}\otimes\hat{\eta}_\mathsf{F}^\perp\otimes\hat{\eta}_\mathsf{F}\otimes\hat{\eta}^\perp_\mathsf{F},f^{yy}_{yy}\rangle
+\frac{|\eta_\mathsf{F}|^2}{\xi_\mathsf{F}^2+1}\langle\hat{\eta}^\perp_\mathsf{F}\otimes\hat{\eta}^\perp_\mathsf{F}\otimes\hat{\eta}^\perp_\mathsf{F}\otimes\hat{\eta}^\perp_\mathsf{F},f^{yy}_{yy}\rangle=0.
\end{split}
\]

Taking 2nd order derivative gives
\begin{equation}
\begin{split}
\left(\frac{|\eta_\mathsf{F}|^4}{(\xi_\mathsf{F}^2+1)^2}+2\frac{|\eta_\mathsf{F}|^2}{\xi_\mathsf{F}^2+1}+1\right)\langle\hat{\eta}^\perp_\mathsf{F}\otimes\hat{\eta}^\perp_\mathsf{F}\otimes\hat{\eta}_\mathsf{F}\otimes\hat{\eta}_\mathsf{F},f^{yy}_{yy}\rangle\\
-\left(\frac{|\eta_\mathsf{F}|^4}{(\xi_\mathsf{F}^2+1)^2}+2\frac{|\eta_\mathsf{F}|^2}{\xi_\mathsf{F}^2+1}+1\right)\langle\hat{\eta}_\mathsf{F}\otimes\hat{\eta}_\mathsf{F}\otimes\hat{\eta}_\mathsf{F}\otimes\hat{\eta}_\mathsf{F},f^{yy}_{yy}\rangle
+4\left(\frac{|\eta_\mathsf{F}|^2}{\xi_\mathsf{F}^2+1}+1\right)\langle\hat{\eta}_\mathsf{F}\otimes\hat{\eta}_\mathsf{F}^\perp\otimes\hat{\eta}_\mathsf{F}\otimes\hat{\eta}^\perp_\mathsf{F},f^{yy}_{yy}\rangle\\+\langle\hat{\eta}_\mathsf{F}\otimes\hat{\eta}_\mathsf{F}\otimes\hat{\eta}^\perp_\mathsf{F}\otimes\hat{\eta}^\perp_\mathsf{F},f^{yy}_{yy}\rangle
-\langle\hat{\eta}^\perp_\mathsf{F}\otimes\hat{\eta}^\perp_\mathsf{F}\otimes\hat{\eta}^\perp_\mathsf{F}\otimes\hat{\eta}^\perp_\mathsf{F},f^{yy}_{yy}\rangle=0.
\end{split}
\end{equation}

Together with \eqref{eq_53} and the trace-free conditions
\[
\langle\hat{\eta}_\mathsf{F}^\perp\otimes\hat{\eta}_\mathsf{F}^\perp,f^{xy}_{xy}\rangle+\langle\hat{\eta}_\mathsf{F}\otimes\hat{\eta}_\mathsf{F}^\perp\otimes\hat{\eta}_\mathsf{F}^\perp\otimes \hat{\eta}_\mathsf{F}, f^{yy}_{yy}\rangle+\langle\hat{\eta}_\mathsf{F}^\perp\otimes\hat{\eta}_\mathsf{F}^\perp\otimes\hat{\eta}_\mathsf{F}^\perp\otimes\hat{\eta}_\mathsf{F}^\perp,f^{yy}_{yy}\rangle=0,
\]
\[
\langle\hat{\eta}_\mathsf{F}\otimes\hat{\eta}_\mathsf{F},f^{xy}_{xy}\rangle+\langle\hat{\eta}_1\otimes\hat{\eta}_\mathsf{F}\otimes\hat{\eta}_\mathsf{F}\otimes\hat{\eta}_\mathsf{F},f^{yy}_{yy}\rangle+\langle\hat{\eta}_\mathsf{F}\otimes\hat{\eta}_\mathsf{F}^\perp\otimes\hat{\eta}_\mathsf{F}^\perp\otimes \hat{\eta}_\mathsf{F}, f^{yy}_{yy}\rangle=0,
\]
we now have
\[
\begin{split}
\left(\begin{array}{ccccc}
0&0&\frac{|\eta_\mathsf{F}|^4}{(\xi_\mathsf{F}^2+1)^2}+2\frac{|\eta_\mathsf{F}|^2}{\xi_\mathsf{F}^2+1}+1&\frac{|\eta|^2}{\xi^2}+1&1\\
\frac{|\eta_\mathsf{F}|^4}{(\xi_\mathsf{F}^2+1)^2}+2\frac{|\eta_\mathsf{F}|^2}{\xi_\mathsf{F}^2+1}+1&-2(\frac{|\eta_\mathsf{F}|^2}{\xi_\mathsf{F}^2+1}+1)&0&2(\frac{|\eta_\mathsf{F}|^4}{(\xi_\mathsf{F}^2+1)^2}+\frac{|\eta_\mathsf{F}|^2}{\xi_\mathsf{F}^2+1})&\frac{|\eta_\mathsf{F}|^2}{\xi_\mathsf{F}^2+1}\\
0&0&-\frac{|\eta_\mathsf{F}|^4}{(\xi_\mathsf{F}^2+1)^2}-2\frac{|\eta_\mathsf{F}|^2}{\xi_\mathsf{F}^2+1}-1&4(\frac{|\eta_\mathsf{F}|^2}{\xi_\mathsf{F}^2+1}+1)&-1\\
1 & 0 & 1&1&1\\
0&1&0&1&1
\end{array}\right)\\
\left(\begin{array}{c}
\langle\hat{\eta}_\mathsf{F}\otimes\hat{\eta}_\mathsf{F},f^{xy}_{xy}\rangle\\
\langle\hat{\eta}_\mathsf{F}^\perp\otimes\hat{\eta}_\mathsf{F}^\perp,f^{xy}_{xy}\rangle\\
\langle\hat{\eta}_\mathsf{F}\otimes\hat{\eta}_\mathsf{F}\otimes\hat{\eta}_\mathsf{F}\otimes\hat{\eta}_\mathsf{F},f^{yy}_{yy}\rangle\\
\langle\hat{\eta}_\mathsf{F}\otimes\hat{\eta}^\perp_\mathsf{F}\otimes \hat{\eta}_\mathsf{F}\otimes\hat{\eta}_\mathsf{F}^\perp, f^{yy}_{yy}\rangle\\
\langle\hat{\eta}_\mathsf{F}^\perp\otimes\hat{\eta}_\mathsf{F}^\perp\otimes\hat{\eta}_\mathsf{F}^\perp\otimes\hat{\eta}_\mathsf{F}^\perp,f^{yy}_{yy}\rangle
\end{array}\right)=0.
\end{split}
\]
So
\[
\begin{split}
&\langle\hat{\eta}_\mathsf{F}\otimes\hat{\eta}_\mathsf{F},f^{xy}_{xy}\rangle=
\langle\hat{\eta}_\mathsf{F}^\perp\otimes\hat{\eta}^\perp_\mathsf{F},f^{xy}_{xy}\rangle=
\langle\hat{\eta}_\mathsf{F}\otimes\hat{\eta}_\mathsf{F}\otimes\hat{\eta}_\mathsf{F}\otimes\hat{\eta}_\mathsf{F},f^{yy}_{yy}\rangle\\
=&\langle\hat{\eta}_\mathsf{F}\otimes\hat{\eta}_\mathsf{F}^\perp\otimes \hat{\eta}_\mathsf{F}\otimes\hat{\eta}_\mathsf{F}^\perp, f^{yy}_{yy}\rangle=
\langle\hat{\eta}_\mathsf{F}^\perp\otimes\hat{\eta}_\mathsf{F}^\perp\otimes\hat{\eta}_\mathsf{F}^\perp\otimes\hat{\eta}_\mathsf{F}^\perp,f^{yy}_{yy}\rangle=0.
\end{split}
\]
Using the gauge conditions, we also have
\[
\begin{split}
\langle\hat{\eta}_\mathsf{F},f^{xy}_{xx}\rangle=\langle\hat{\eta}_\mathsf{F}\otimes\hat{\eta}_\mathsf{F}\otimes\hat{\eta}_\mathsf{F},f^{xy}_{yy}\rangle=\langle\hat{\eta}^\perp_\mathsf{F}\otimes\hat{\eta}^\perp_\mathsf{F}\otimes\hat{\eta}_\mathsf{F},f^{xy}_{yy}\rangle=\langle\hat{\eta}_\mathsf{F}\otimes\hat{\eta}_\mathsf{F}\otimes\hat{\eta}_\mathsf{F},f^{yy}_{xy}\rangle\\
=\langle\hat{\eta}_\mathsf{F}\otimes\hat{\eta}_\mathsf{F},f^{yy}_{xx}\rangle=\langle\hat{\eta}_\mathsf{F}\otimes\hat{\eta}_\mathsf{F}^\perp\otimes\hat{\eta}_\mathsf{F}^\perp, f^{yy}_{xy}\rangle=0.
\end{split}
\]
Step 3. Now we have
\[
(\xi_\mathsf{F}-\mathrm{i})^2\left(\frac{\hat{Y}\cdot\eta_\mathsf{F}}{\xi_\mathsf{F}^2+1}\right)^2f^{xx}_{xx}-2(\xi_\mathsf{F}-\mathrm{i})\left(\frac{\hat{Y}\cdot\eta_\mathsf{F}}{\xi_\mathsf{F}^2+1}\right)\langle\hat{Y},f^{xx}_{xy}\rangle+\langle\hat{Y}\otimes\hat{Y},f^{xx}_{yy}\rangle=0,
\]
and then
\[
\begin{split}
&\epsilon^2\mathfrak{C}^2f^{xx}_{xx}-2\epsilon^2\mathfrak{C}\langle\hat{\eta}_\mathsf{F},f^{xx}_{xy}\rangle
-2\epsilon\sqrt{1-\epsilon^2}\mathfrak{C}\langle\hat{\eta}_\mathsf{F}^\perp,f^{xx}_{xy}\rangle\\
&\quad\quad\quad\quad+\epsilon^2\langle\hat{\eta}_\mathsf{F}\otimes\hat{\eta}_\mathsf{F},f^{xx}_{yy}\rangle+2\epsilon\sqrt{1-\epsilon^2}\langle\hat{\eta}_\mathsf{F}^\perp\otimes\hat{\eta}_\mathsf{F},f^{xx}_{yy}\rangle=0.
\end{split}
\]
Using the identity $\langle\hat{\eta}_\mathsf{F}\otimes\hat{\eta}_\mathsf{F},f^{xy}_{xy}\rangle=\langle\hat{\eta}_\mathsf{F}^\perp\otimes\hat{\eta}_\mathsf{F}^\perp,f^{xy}_{xy}\rangle=0$ and the trace-free condition
\[
f^{xx}_{xx}+\langle\hat{\eta}_\mathsf{F}\otimes\hat{\eta}_\mathsf{F},f^{xy}_{xy}\rangle+\langle\hat{\eta}_\mathsf{F}^\perp\otimes\hat{\eta}_\mathsf{F}^\perp,f^{xy}_{xy}\rangle=0,
\]
we have $f^{xx}_{xx}=0$, and then $\langle\hat{\eta}_\mathsf{F},f^{xx}_{xy}\rangle=\langle\hat{\eta}_\mathsf{F}\otimes\hat{\eta}_\mathsf{F},f^{xx}_{yy}\rangle=0$. So we have
\[
-2\mathfrak{C}\langle\hat{\eta}_\mathsf{F}^\perp,f^{xx}_{xy}\rangle+2\langle\hat{\eta}_\mathsf{F}^\perp\otimes\hat{\eta}_\mathsf{F},f^{xx}_{yy}\rangle=0.
\]
Using the gauge condition, we have
\[
(\frac{|\eta_\mathsf{F}|^2}{\xi_\mathsf{F}^2+1}+1)\langle\hat{\eta}_\mathsf{F}^\perp\otimes\hat{\eta}_\mathsf{F},f^{xx}_{yy}\rangle=0,
\]
and consequently
\[
\langle\hat{\eta}_\mathsf{F}^\perp,f^{xx}_{xy}\rangle=\langle\hat{\eta}_\mathsf{F}^\perp\otimes\hat{\eta}_\mathsf{F},f^{xx}_{yy}\rangle=0.
\]

Now we can conclude that $f= 0$. This shows that the principal symbol of $N_\mathsf{F}$ is elliptic at base infinity restricted to the kernel of the principal symbol of $\delta^\mathcal{B}_\mathsf{F}$.

\end{proof}
\subsection{The gauge condition and the proof of the main results}

 In this section, we use
\[
\begin{array}{rrrr}
(x^2\partial_x)\otimes (x^2\partial_x)\otimes\frac{\mathrm{d}x}{x^2},&\quad (x^2\partial_x)\otimes (x^2\partial_x)\otimes\frac{\mathrm{d}y}{x},\\
 2(x^2\partial_x)\otimes^s (x\partial_y)\otimes\frac{\mathrm{d}x}{x^2},&\quad 2(x^2\partial_x)\otimes^s (x\partial_y)\otimes\frac{\mathrm{d}y}{x}, \\
  (x\partial_y)\otimes (x\partial_y)\otimes\frac{\mathrm{d}x}{x^2},&\quad  (x\partial_y)\otimes (x\partial_y)\otimes\frac{\mathrm{d}y}{x}
  \end{array}
\]
as a basis for scattering symmetric $(2,1)$-tensors,
and
\[
(x^2\partial_x)\otimes\frac{\mathrm{d}x}{x^2},\,(x^2\partial_x)\otimes\frac{\mathrm{d}y}{x},\,(x\partial_y)\otimes\frac{\mathrm{d}x}{x^2},\,(x\partial_y)\otimes\frac{\mathrm{d}y}{x}
\]
for scattering $(1,1)$-tensors.

Then the principal symbol of $\frac{\mathrm{d}'_\mathsf{F}}{\mathrm{i}}:S^2_1 {^{sc}}TX\rightarrow S^2_2{^{sc}}TX$ on scattering symmetric $(2,1)$-tensors is
\[
\left(\begin{array}{cccccc}
\xi+\mathrm{i}\mathsf{F} &&&&&\\
\frac{1}{2}\eta_2&\frac{1}{2} (\xi+\mathrm{i}\mathsf{F})&&&&\\
&\eta_2\otimes^s&&&&\\
&&\xi+\mathrm{i}\mathsf{F} &&&\\
&&\frac{1}{2}\eta_2&\frac{1}{2}( \xi+\mathrm{i}\mathsf{F})&&\\
&&&\eta_2\otimes^s&&\\
&&&&\xi+\mathrm{i}\mathsf{F} &\\
&&&&\frac{1}{2}\eta_2&\frac{1}{2}( \xi+\mathrm{i}\mathsf{F})\\
&&&&&\eta_2\otimes^s
\end{array}\right)+R.
\]
Here $R$ is the principal symbol of an operator in $\mathrm{Diff}^0_{sc}(X,S^2_1{^{sc}TX};S^2_2{^{sc}TX})$.
The principal symbol of $\frac{\delta_\mathsf{F}}{\mathrm{i}}:S^2_2{^{sc}}TX\rightarrow S^2_1{^{sc}}TX$ on scattering symmetric $(2,2)$-tensors is
\[
\left(\begin{array}{ccccccccc}
\xi-\mathrm{i}\mathsf{F} &\langle\eta_2,\cdot\rangle&&&&&&&\\
0&\xi-\mathrm{i}\mathsf{F} &\langle\eta_2,\cdot\rangle&&&&&&\\
&&&\xi-\mathrm{i}\mathsf{F} &\langle\eta_2,\cdot\rangle&&&&\\
&&&0&\xi-\mathrm{i}\mathsf{F} &\langle\eta_2,\cdot\rangle&&&\\
&&&&&&\xi-\mathrm{i}\mathsf{F} &\langle\eta_2,\cdot\rangle&\\
&&&&&&0&\xi-\mathrm{i}\mathsf{F} &\langle\eta_2,\cdot\rangle
\end{array}\right)+R'.
\]
Here $R'$ is the principal symbol of an operator in $\mathrm{Diff}^0_{sc}(X,S^2_2{^{sc}TX};S^2_1{^{sc}TX})$.
In the following, we can consider the operators semiclassically with a small parameter $h=\mathsf{F}^{-1}$, therefore ignoring lower order terms like $R,R'$.

Then the (semiclassical) principal symbol of $-\delta_\mathsf{F}\mathrm{d}'_\mathsf{F}:S^2_1{^{sc}}TX\rightarrow S^2_1{^{sc}}TX$ on scattering symmetric $(2,1)$-tensors is
\[
{\tiny
\left(\begin{array}{cccccc}
\xi^2+\mathsf{F}^2+\frac{1}{2}|\eta|^2 & \frac{1}{2}(\xi+\mathrm{i}\mathsf{F})\langle\eta_2,\cdot\rangle&&&&\\
\frac{1}{2}(\xi-\mathrm{i}\mathsf{F})\eta_2&\frac{1}{2}(\xi^2+\mathsf{F}^2)+\langle\eta_2,\cdot\rangle\eta_2\otimes^s&&&&\\
&&\xi^2+\mathsf{F}^2+\frac{1}{2}|\eta|^2 & \frac{1}{2}(\xi+\mathrm{i}\mathsf{F})\langle\eta_2,\cdot\rangle&&\\
&&\frac{1}{2}(\xi-\mathrm{i}\mathsf{F})\eta_2&\frac{1}{2}(\xi^2+\mathsf{F}^2)+\langle\eta_2,\cdot\rangle\eta_2\otimes^s&&\\
&&&&\xi^2+\mathsf{F}^2+\frac{1}{2}|\eta|^2 & \frac{1}{2}(\xi+\mathrm{i}\mathsf{F})\langle\eta_2,\cdot\rangle\\
&&&&\frac{1}{2}(\xi-\mathrm{i}\mathsf{F})\eta_2&\frac{1}{2}(\xi^2+\mathsf{F}^2)+\langle\eta_2,\cdot\rangle\eta_2\otimes^s
\end{array}\right).
}
\]
Note that
\[
\langle\eta_2,\cdot\rangle\eta_2\otimes^s=\frac{1}{2}|\eta|^2+\frac{1}{2}\eta_2\langle\eta_2,\cdot\rangle.
\]
Here we ignore the terms in $\mathrm{Diff}^1_{sc}(X,S^2_1{^{sc}TX};S^2_1{^{sc}TX})+\mathsf{F}\mathrm{Diff}^0_{sc}(X,S^2_1{^{sc}TX};S^2_1{^{sc}TX})$.\\

The trace operator 
\[
\mu:S^{2}_{2}{^{sc}}TX\rightarrow S^{1}_{1}{^{sc}}TX,\quad(\mu f)^i_j=\delta_k^\ell f^{ik}_{j\ell}
\]
 has principal symbol
\[
\left(\begin{array}{ccccccccc}
1 & 0& 0&0 &\langle \mathrm{Id},\cdot\rangle &0&0&0&0\\
0&1 & 0&0 & 0&\langle \mathrm{Id},\cdot\rangle &0&0&0\\
0&0&0&1 & 0&0 &0 &\langle \mathrm{Id},\cdot\rangle &0\\
0&0&0&0&1 &0 &0 & 0&\langle \mathrm{Id},\cdot\rangle
\end{array}\right),
\]
where $\langle \mathrm{Id},f^{\star y}_{\star y}\rangle=\sum_{i=1}^2 f^{\star y^i}_{\star y^i}$ with $\star =x$ or $y$. We can consider $\mathrm{Id}=\delta^{y^i}_{y^j}$ as a $(1,1)$-tensor in $y$.
The dual operator of $\mu$,
\[
\lambda:S^{1}_{1}{^{sc}}TX\rightarrow S^{2}_{2}{^{sc}}TX,\quad(\lambda w)^{ik}_{j\ell}=\delta^j_\ell w^i_k
\]
 has principal symbol
\[
\left(\begin{array}{cccc}
1 & 0 & 0 & 0\\
0 & \frac{1}{2}& 0 & 0\\
0&0&0&0\\
0&0&\frac{1}{2}&0\\
\frac{1}{4}\mathrm{Id}&0&0&\frac{1}{4}\\
0&\frac{1}{2}\sym_2\mathrm{Id}&0&0\\
0&0&0&0\\
0&0&\frac{1}{2}\sym_1\mathrm{Id}&0\\
0&0&0&\sym_1\sym_2\mathrm{Id}
\end{array}\right).
\]
Here $\sym_1$ represents symmetrization in upper indices, and $\sym_2$ in lower indices.

By \cite[Lemma 2.1]{de2021generic}, we have (in dimension $n=3$)
\[
\mathcal{B}f=(\mathrm{Id}-\frac{4}{5}\lambda\mu)f,\quad \text{for } f\in S^2_2TM.
\]
So we have the mapping properties
\[
\mathrm{d}_\mathsf{F}^\mathcal{B}=\mathcal{B}\mathrm{d}'_\mathsf{F}=\mathrm{d}'_\mathsf{F}-\frac{4}{5}\lambda\mu\mathrm{d}'_\mathsf{F}:\mathcal{B}S^2_1 {^{sc}}TX\rightarrow \mathcal{B}S^2_2{^{sc}}TX,
\]
and
\[
\Delta_\mathsf{F}^\mathcal{B}:=\delta_\mathsf{F}^\mathcal{B}\mathrm{d}_\mathsf{F}^\mathcal{B}=\delta_\mathsf{F}\mathrm{d}'_\mathsf{F}-\frac{4}{5}\delta_\mathsf{F}\lambda\mu\mathrm{d}'_\mathsf{F}:\mathcal{B}S^2_1 {^{sc}}TX\rightarrow \mathcal{B}S^2_1{^{sc}}TX.
\]
We need to prove the invertibility of $\Delta_\mathsf{F}^\mathcal{B}$ as in \cite{paper1}.\\

By calculation, the principal symbol of $\frac{1}{\mathrm{i}}\mu\mathrm{d}'_\mathsf{F}:S^2_1 {^{sc}}TX\rightarrow S^2_2{^{sc}}TX$ is
\[
\left(\begin{array}{cccccc}
\xi+\mathrm{i}\mathsf{F} &0&\frac{1}{2}\langle\eta_1,\cdot\rangle&\frac{1}{2}(\xi+\mathrm{i}\mathsf{F})\langle\mathrm{Id},\cdot\rangle&0&0\\
\frac{1}{2}\eta_2&\frac{1}{2}(\xi+\mathrm{i}\mathsf{F})&0&\langle\mathrm{Id},\cdot\rangle\eta_2\otimes^s&0&0\\
0&0&\xi+\mathrm{i}\mathsf{F} &0&\frac{1}{2}\langle\mathrm{Id},\cdot\rangle\eta_2&\frac{1}{2}(\xi+\mathrm{i}\mathsf{F})\langle\mathrm{Id},\cdot\rangle\\
0&0&\frac{1}{2}\eta_2&\frac{1}{2}(\xi+\mathrm{i}\mathsf{F})&0&\langle\mathrm{Id},\cdot\rangle\eta_2\otimes^s\\
\end{array}\right),
\]
where
\[
\langle\mathrm{Id},\cdot\rangle\eta_2\otimes^s=\frac{1}{2}\langle\eta_1,\cdot\rangle+\frac{1}{2}\eta_2\langle\mathrm{Id},\cdot\rangle.
\]
The principal symbol of $\frac{1}{\mathrm{i}}\delta_\mathsf{F}\lambda:S^2_2 {^{sc}}TX\rightarrow S^2_1{^{sc}}TX$ is

\[
\left(\begin{array}{cccc}
\xi-\mathrm{i}\mathsf{F}&\frac{1}{2}\langle\eta_2,\cdot\rangle&0&0\\
0&\frac{1}{2}(\xi-\mathrm{i}\mathsf{F})&0&0\\
\frac{1}{4}\eta_1&0&\frac{1}{2}(\xi-\mathrm{i}\mathsf{F})&\frac{1}{4}\langle\eta_2,\cdot\rangle\\
\frac{1}{4}(\xi-\mathrm{i}\mathsf{F})\mathrm{Id}&\frac{1}{2}\langle\eta_2,\cdot\rangle \sym_2\mathrm{Id}&0&\frac{1}{4}(\xi-\mathrm{i}\mathsf{F})\\
0&0&\frac{1}{2}\langle\eta_2,\cdot\rangle \sym_1\mathrm{Id}&0\\
0&0&\frac{1}{2}(\xi-\mathrm{i}\mathsf{F}) \sym_1\mathrm{Id}&\langle\eta_2,\cdot\rangle \sym_1\sym_2\mathrm{Id}
\end{array}\right).
\]
We note that
\[
\frac{1}{2}\langle\eta_2,\cdot\rangle \sym_2\mathrm{Id}=\frac{1}{4}(\eta_1+\langle\eta_2,\cdot\rangle\mathrm{Id}).
\]
By tedious calculation, the principal symbol of $-\delta_\mathsf{F}\lambda\mu\mathrm{d}_\mathsf{F}:S^2_1 {^{sc}}TX\rightarrow S^2_1{^{sc}}TX$ is
\[
{\scriptsize
\left(\begin{array}{cccccc}
\xi^2+\mathsf{F}^2+\frac{1}{4}|\eta|^2&\frac{1}{4}(\xi+\mathrm{i}\mathsf{F})\langle\eta_2,\cdot\rangle&\frac{1}{2}(\xi-\mathrm{i}\mathsf{F})\langle\eta_1,\cdot\rangle&\mathfrak{D}_{14}&0&0\\
\frac{1}{4}(\xi-\mathrm{i}\mathsf{F})\eta_2 &\frac{1}{4}(\xi^2+\mathsf{F}^2)&0&\mathfrak{D}_{24}&0&0\\
\frac{1}{4}(\xi+\mathrm{i}\mathsf{F})\eta_1&0&\mathfrak{D}_{33}&\mathfrak{D}_{34}&\frac{1}{4}(\xi-\mathrm{i}\mathsf{F})\langle\eta_1,\cdot\rangle&\frac{1}{4}(\xi^2+\mathsf{F}^2)\langle\mathrm{Id},\cdot\rangle\\
\mathfrak{D}_{41}&\mathfrak{D}_{42}&\mathfrak{D}_{43}&\mathfrak{D}_{44}&0&\mathfrak{D}_{46}\\
0&0&\frac{1}{2}(\xi+\mathrm{i}\mathsf{F})\eta_1\otimes^s&0&\frac{1}{4}\eta_1\otimes^s\langle\eta_1,\cdot\rangle&\frac{1}{4}(\xi+\mathrm{i}\mathsf{F})\eta_1\otimes^s\langle\mathrm{Id},\cdot\rangle\\
0&0&\mathfrak{D}_{63}&\mathfrak{D}_{64}&\frac{1}{4}(\xi-\mathrm{i}\mathsf{F})\mathrm{Id}\otimes^s\langle\eta_1,\cdot\rangle&\mathfrak{D}_{66}
\end{array}\right),
}
\]
where
\[
\mathfrak{D}_{14}=\frac{1}{2}(\xi^2+\mathsf{F}^2)\langle\mathrm{Id},\cdot\rangle+\frac{1}{4}|\eta|^2\langle\mathrm{Id},\cdot\rangle+\frac{1}{4}\langle\eta_1\otimes\eta_2,\cdot\rangle,\quad
\mathfrak{D}_{41}=\frac{1}{4}(\xi^2+\mathsf{F}^2)\mathrm{Id}\otimes+\frac{1}{8}(|\eta|^2\mathrm{Id}\otimes+\eta_1\eta_2),
\]
\[
\mathfrak{D}_{24}=\frac{1}{4}(\xi-\mathrm{i}\mathsf{F})(\langle\eta_1,\cdot\rangle+\langle\mathrm{Id},\cdot\rangle\eta_2),\quad
\mathfrak{D}_{42}=\frac{1}{8}(\xi+\mathrm{i}\mathsf{F})(\eta_1+\langle\eta_2,\cdot\rangle\mathrm{Id}),
\]
\[
\mathfrak{D}_{33}=\frac{1}{2}(\xi^2+\mathsf{F}^2)+\frac{1}{8}\eta_1\langle\eta_1,\cdot\rangle+\frac{1}{8}|\eta|^2,
\]
\[
\mathfrak{D}_{34}=\frac{1}{8}(\xi+\mathrm{i}\mathsf{F})\eta_1\langle\mathrm{Id},\cdot\rangle+\frac{1}{8}(\xi+\mathrm{i}\mathsf{F})\langle\eta_2,\cdot\rangle,\quad
\mathfrak{D}_{43}=\frac{1}{8}(\xi-\mathrm{i}\mathsf{F})\mathrm{Id}\langle\eta_1,\cdot\rangle+\frac{1}{8}(\xi-\mathrm{i}\mathsf{F})\eta_2,
\]
\[
\mathfrak{D}_{36}=\frac{1}{4}(\xi^2+\mathsf{F}^2)\langle\mathrm{Id},\cdot\rangle+\frac{1}{8}(\langle\eta_1\otimes\eta_2,\cdot\rangle+|\eta|^2\langle\mathrm{Id},\cdot\rangle),\quad
\mathfrak{D}_{63}=\frac{1}{2}(\xi^2+\mathsf{F}^2)\sym_1\mathrm{Id}+\frac{1}{4}((\eta_1\otimes^s)\eta_2+|\eta|^2\sym_1\mathrm{Id}),
\]
\[
\mathfrak{D}_{44}=\frac{1}{8}(\xi^2+\mathsf{F}^2)(1+\mathrm{Id}\langle\mathrm{Id},\cdot\rangle)+\frac{1}{8}(\eta_1\langle\eta_1,\cdot\rangle+\langle\eta_1\otimes\eta_2,\cdot\rangle\mathrm{Id}+\langle\mathrm{Id},\cdot\rangle\eta_1\eta_2+|\eta|^2\langle\mathrm{Id},\cdot\rangle\mathrm{Id}),
\]
\[
\mathfrak{D}_{46}=\frac{1}{4}(\xi-\mathrm{i}\mathsf{F})\langle\mathrm{Id},\cdot\rangle\eta_2\otimes^s=\frac{1}{8}(\xi-\mathrm{i}\mathsf{F})\langle\eta_1,\cdot\rangle+\frac{1}{8}(\xi-\mathrm{i}\mathsf{F})\eta_2\otimes\langle\mathrm{Id},\cdot\rangle,
\]
\[
\mathfrak{D}_{64}=\frac{1}{2}(\xi+\mathrm{i}\mathsf{F})\langle\eta_2,\cdot\rangle \sym_1\sym_2\mathrm{Id}\otimes=\frac{1}{4}(\xi+\mathrm{i}\mathsf{F})\eta_1\otimes^s+\frac{1}{4}(\xi+\mathrm{i}\mathsf{F})\mathrm{Id}\otimes^s\langle\eta_2,\cdot\rangle,
\]
\[
\mathfrak{D}_{66}=\frac{1}{4}(\xi^2+\mathsf{F}^2)\mathrm{Id}\otimes^s\langle\mathrm{Id},\cdot\rangle+\frac{1}{4}\eta_1\otimes^s\langle\eta_1,\cdot\rangle+\frac{1}{4}\eta_1\otimes^s\eta_2\otimes\langle\mathrm{Id},\cdot\rangle+\frac{1}{4}|\eta|^2\mathrm{Id}\otimes^s\langle\mathrm{Id},\cdot\rangle+\frac{1}{4}\mathrm{Id}\otimes^s\langle\eta_1\otimes\eta_2,\cdot\rangle.
\]
Here have denoted $\mathrm{Id}\otimes^s=\sym_1\sym_2\mathrm{Id}\otimes$.\\

Next we simplify the principal symbol of $-\delta_\mathsf{F}\lambda\mu\mathrm{d}_\mathsf{F}$ on trace-free symmetric $(2,1)$-tensors.
Now we assume that $f$ is trace-free, and calculate
\begin{flalign*}
\quad\quad&\mathfrak{D}_{11}f^{xx}_x+\mathfrak{D}_{12}f^{xx}_y+\mathfrak{D}_{13}f^{xy}_x+\mathfrak{D}_{14}f^{xy}_y\\
=&(\xi^2+\mathsf{F}^2+\frac{1}{4}|\eta|^2)f^{xx}_x+\frac{1}{4}(\xi+\mathrm{i}\mathsf{F})\langle\eta_2,f^{xx}_y\rangle +\frac{1}{2}(\xi-\mathrm{i}\mathsf{F})\langle\eta_1,f^{xy}_x\rangle \\
&\quad+\frac{1}{2}(\xi^2+\mathsf{F}^2)\langle\mathrm{Id},f^{xy}_y\rangle+\frac{1}{4}|\eta|^2\langle\mathrm{Id},f^{xy}_y\rangle+\frac{1}{4}\langle\eta_1\otimes\eta_2,f^{xy}_y\rangle\\
=&(\frac{1}{2}\xi^2+\frac{1}{2}\mathsf{F}^2+\frac{1}{4}|\eta|^2)(f^{xx}_x+\langle\mathrm{Id},f^{xy}_y\rangle)+\frac{1}{2}(\xi^2+\mathsf{F}^2)f^{xx}_x+\frac{1}{4}(\xi+\mathrm{i}\mathsf{F})\langle\eta_2,f^{xx}_y\rangle\\
&+\frac{1}{2}(\xi-\mathrm{i}\mathsf{F})\langle\eta_1,f^{xy}_x\rangle+\frac{1}{4}\langle\eta_1\otimes\eta_2,f^{xy}_y\rangle\\
=&\frac{1}{2}(\xi^2+\mathsf{F}^2)f^{xx}_x+\frac{1}{4}(\xi+\mathrm{i}\mathsf{F})\langle\eta_2,f^{xx}_y\rangle+\frac{1}{2}(\xi-\mathrm{i}\mathsf{F})\langle\eta_1,f^{xy}_x\rangle+\frac{1}{4}\langle\eta_1\otimes\eta_2,f^{xy}_y\rangle,&
\end{flalign*}

\begin{flalign*}
\quad\quad&\mathfrak{D}_{21}f^{xx}_x+\mathfrak{D}_{22}f^{xx}_y+\mathfrak{D}_{24}f^{xy}_y\\
=&\frac{1}{4}(\xi-\mathrm{i}\mathsf{F})\eta_2f^{xx}_x +\frac{1}{4}(\xi^2+\mathsf{F}^2)f^{xx}_y+\frac{1}{4}(\xi-\mathrm{i}\mathsf{F})(\langle\eta_1,f^{xy}_{y}\rangle+\langle\mathrm{Id},f^{xy}_{y}\rangle\eta_2)\\
=&\frac{1}{4}(\xi-\mathrm{i}\mathsf{F})\eta_2(f^{xx}_x+\langle\mathrm{Id},f^{xy}_y\rangle)+\frac{1}{4}(\xi^2+\mathsf{F}^2)f^{xx}_y+\frac{1}{4}(\xi-\mathrm{i}\mathsf{F})\langle\eta_1,f^{xy}_{y}\rangle\\
=&\frac{1}{4}(\xi^2+\mathsf{F}^2)f^{xx}_y+\frac{1}{4}(\xi-\mathrm{i}\mathsf{F})\langle\eta_1,f^{xy}_{y}\rangle,&
\end{flalign*}

\begin{flalign*}
\quad\quad&\mathfrak{D}_{31}f^{xx}_x+\mathfrak{D}_{33}f^{xy}_x+\mathfrak{D}_{34}f^{xy}_y+\mathfrak{D}_{35}f^{yy}_x+\mathfrak{D}_{36}f^{yy}_y\\
=&\frac{1}{4}(\xi+\mathrm{i}\mathsf{F})\eta_1f^{xx}_x+\frac{1}{2}(\xi^2+\mathsf{F}^2+\frac{1}{4}|\eta|^2)f^{xy}_x+\frac{1}{8}\eta_1\langle\eta_1,f^{xy}_x\rangle+\frac{1}{8}(\xi+\mathrm{i}\mathsf{F})\eta_1\langle\mathrm{Id},f^{xy}_{y}\rangle\\
&+\frac{1}{8}(\xi+\mathrm{i}\mathsf{F})\langle\eta_2,f^{xy}_{y}\rangle+\frac{1}{4}(\xi-\mathrm{i}\mathsf{F})\langle\eta_1,f^{yy}_x\rangle+\frac{1}{4}(\xi^2+\mathsf{F}^2)\langle\mathrm{Id},f^{yy}_y\rangle\\
&+\frac{1}{8}(\langle\eta_1\otimes\eta_2,f^{yy}_y\rangle+|\eta|^2\langle\mathrm{Id},f^{yy}_y\rangle)\\
=&\frac{1}{8}(\xi+\mathrm{i}\mathsf{F})\eta_1f^{xx}_x+\frac{1}{4}(\xi^2+\mathsf{F}^2)f^{xy}_x+\frac{1}{8}\eta_1\langle\eta_1,f^{xy}_x\rangle
+\frac{1}{8}(\xi+\mathrm{i}\mathsf{F})\langle\eta_2,f^{xy}_{y}\rangle\\
&+\frac{1}{4}(\xi-\mathrm{i}\mathsf{F})\langle\eta_1,f^{yy}_x\rangle+\frac{1}{8}\langle\eta_1\otimes\eta_2,f^{yy}_y\rangle,&
\end{flalign*}

\begin{flalign*}
\quad\quad&\mathfrak{D}_{41}f^{xx}_x+\mathfrak{D}_{42}f^{xx}_y+\mathfrak{D}_{43}f^{xy}_x+\mathfrak{D}_{44}f^{xy}_y+\mathfrak{D}_{46}f^{yy}_y\\
=&\frac{1}{4}(\xi^2+\mathsf{F}^2)f^{xx}_x\mathrm{Id}+\frac{1}{8}(|\eta|^2\mathrm{Id}+\eta_1\eta_2)f^{xx}_x+\frac{1}{8}(\xi+\mathrm{i}\mathsf{F})(\eta_1f^{xx}_y+\langle\eta_2,f^{xx}_y\rangle\mathrm{Id})+\frac{1}{8}(\xi-\mathrm{i}\mathsf{F})\langle\eta_1,f^{xy}_x\rangle\mathrm{Id}\\
&+\frac{1}{8}(\xi-\mathrm{i}\mathsf{F})\eta_2f^{xy}_x
+\frac{1}{8}(\xi^2+\mathsf{F}^2)(f^{xy}_y+\mathrm{Id}\langle\mathrm{Id},f^{xy}_y\rangle)+\frac{1}{8}(\eta_1\langle\eta_1,f^{xy}_y\rangle+\langle\eta_1\otimes\eta_2,f^{xy}_y\rangle\mathrm{Id})+\frac{1}{8}\langle\mathrm{Id},f^{xy}_y\rangle\eta_1\eta_2\\
&+\frac{1}{8}|\eta|^2\langle\mathrm{Id},f^{xy}_y\rangle\mathrm{Id}+\frac{1}{8}(\xi-\mathrm{i}\mathsf{F})\langle\eta_1,f^{yy}_y\rangle+\frac{1}{8}(\xi-\mathrm{i}\mathsf{F})\eta_2\otimes\langle\mathrm{Id},f^{yy}_y\rangle\\
=&\frac{1}{8}(\xi^2+\mathsf{F}^2)f^{xx}_x\mathrm{Id}+\frac{1}{8}(\xi+\mathrm{i}\mathsf{F})(\eta_1f^{xx}_y+\langle\eta_2,f^{xx}_y\rangle\mathrm{Id})+\frac{1}{8}(\xi-\mathrm{i}\mathsf{F})\langle\eta_1,f^{xy}_x\rangle\mathrm{Id}+\frac{1}{8}(\xi^2+\mathsf{F}^2)f^{xy}_y\\
&+\frac{1}{8}(\eta_1\langle\eta_1,f^{xy}_y\rangle+\langle\eta_1\otimes\eta_2,f^{xy}_y\rangle\mathrm{Id})+\frac{1}{8}(\xi-\mathrm{i}\mathsf{F})\langle\eta_1,f^{yy}_y\rangle,&
\end{flalign*}

\begin{flalign*}
\quad\quad&\mathfrak{D}_{53}f^{xy}_x+\mathfrak{D}_{55}f^{yy}_x+\mathfrak{D}_{56}f^{yy}_y\\
=&\frac{1}{2}(\xi+\mathrm{i}\mathsf{F})\eta_1\otimes^s f^{xy}_x+\frac{1}{4}\eta_1\otimes^s\langle\eta_1,f^{yy}_x\rangle+\frac{1}{4}(\xi+\mathrm{i}\mathsf{F})\eta_1\otimes^s\langle\mathrm{Id},f^{yy}_y\rangle\\
=&\frac{1}{4}(\xi+\mathrm{i}\mathsf{F})\eta_1\otimes^s f^{xy}_x+\frac{1}{4}\eta_1\otimes^s\langle\eta_1,f^{yy}_x\rangle,&
\end{flalign*}

\begin{flalign*}
\quad\quad&\mathfrak{D}_{63}f^{xy}_x+\mathfrak{D}_{64}f^{xy}_y+\mathfrak{D}_{65}f^{yy}_x+\mathfrak{D}_{66}f^{yy}_y\\
=&\frac{1}{2}(\xi^2+\mathsf{F}^2)\sym_1\mathrm{Id}f^{xy}_x+\frac{1}{4}((\eta_1\otimes^s)\eta_2f^{xy}_x+|\eta|^2\sym_1\mathrm{Id}f^{xy}_x)+\frac{1}{4}(\xi+\mathrm{i}\mathsf{F})\eta_1\otimes^sf^{xy}_y\\
&+\frac{1}{4}(\xi+\mathrm{i}\mathsf{F})\mathrm{Id}\otimes^s\langle\eta_2,f^{xy}_y\rangle+\frac{1}{4}(\xi-\mathrm{i}\mathsf{F})\sym_1\mathrm{Id}\langle\eta_1,f^{yy}_x\rangle+\frac{1}{4}(\xi^2+\mathsf{F}^2)\mathrm{Id}\otimes^s\langle\mathrm{Id},f^{yy}_y\rangle\\
&+\frac{1}{4}\eta_1\otimes^s\langle\eta_1,f^{yy}_y\rangle+\frac{1}{4}\eta_1\otimes^s\langle\mathrm{Id},f^{yy}_y\rangle\eta_2+\frac{1}{4}|\eta|^2\mathrm{Id}\otimes^s\langle\mathrm{Id},\cdot\rangle+\frac{1}{4}\mathrm{Id}\otimes^s\langle\eta_1\otimes\eta_2,\cdot\rangle\\
=&\frac{1}{4}(\xi^2+\mathsf{F}^2)\sym_1\mathrm{Id}f^{xy}_x+\frac{1}{4}(\xi+\mathrm{i}\mathsf{F})\eta_1\otimes^sf^{xy}_y+\frac{1}{4}(\xi+\mathrm{i}\mathsf{F})\mathrm{Id}\otimes^s\langle\eta_2,f^{xy}_y\rangle+\frac{1}{4}(\xi-\mathrm{i}\mathsf{F})\sym_1\mathrm{Id}\langle\eta_1,\cdot\rangle\\
&+\frac{1}{4}\eta_1\otimes^s\langle\eta_1,f^{yy}_y\rangle+\frac{1}{4}\mathrm{Id}\otimes^s\langle\eta_1\otimes\eta_2,\cdot\rangle.&
\end{flalign*}

So, acting on trace-free tensors, the principal symbol of $-\delta_\mathsf{F}\lambda\mu\mathrm{d}_\mathsf{F}:\mathcal{B}S^2_1 {^{sc}}TX\rightarrow S^2_1{^{sc}}TX$ reduces to
\[
{\tiny
\left(\begin{array}{cccccc}
\frac{1}{2}(\xi^2+\mathsf{F}^2)&\frac{1}{4}(\xi+\mathrm{i}\mathsf{F})\langle\eta_2,\cdot\rangle&\frac{1}{2}(\xi-\mathrm{i}\mathsf{F})\langle\eta_1,\cdot\rangle&\frac{1}{4}\langle\eta_1\otimes\eta_2,\cdot\rangle&0&0\\
0&\frac{1}{4}(\xi^2+\mathsf{F}^2)&0&\frac{1}{4}\xi-\mathrm{i}\mathsf{F})(\langle\eta_1,\cdot\rangle&0&0\\
\frac{1}{8}(\xi+\mathrm{i}\mathsf{F})\eta_1&0&\frac{1}{4}(\xi^2+\mathsf{F}^2)+\frac{1}{8}\eta_1\langle\eta_1,\cdot\rangle
&\frac{1}{8}(\xi+\mathrm{i}\mathsf{F})\langle\eta_2,\cdot\rangle&\frac{1}{4}(\xi-\mathrm{i}\mathsf{F})\langle\eta_1,\cdot\rangle&\frac{1}{8}\langle\eta_1\otimes\eta_2,\cdot\rangle\\
\frac{1}{8}(\xi^2+\mathsf{F}^2)\mathrm{Id}&\frac{1}{8}(\xi+\mathrm{i}\mathsf{F})(\eta_1+\langle\eta_2,\cdot\rangle\mathrm{Id})&\frac{1}{8}(\xi-\mathrm{i}\mathsf{F})\eta_2&C_{44}&0&\frac{1}{8}(\xi-\mathrm{i}\mathsf{F})\langle\eta_1,\cdot\rangle\\
0&0&\frac{1}{4}(\xi+\mathrm{i}\mathsf{F})\eta_1\otimes^s&0&\frac{1}{4}\eta_1\otimes^s\langle\eta_1,\cdot\rangle&0\\
0&0&\frac{1}{4}(\xi^2+\mathsf{F}^2)\sym_1\mathrm{Id}&C_{64}&\frac{1}{4}(\xi-\mathrm{i}\mathsf{F})\sym_1\mathrm{Id}\langle\eta_1,\cdot\rangle&C_{66}
\end{array}\right),
}
\]
where
\[
\begin{split}
 C_{44}&=\frac{1}{8}(\xi^2+\mathsf{F}^2)+\frac{1}{8}(\eta_1\langle\eta_1,\cdot\rangle+\langle\eta_1\otimes\eta_2,\cdot\rangle\mathrm{Id}),\\
 C_{64}&=\frac{1}{4}(\xi+\mathrm{i}\mathsf{F})(\eta_1\otimes^s+\mathrm{Id}\otimes^s\langle\eta_2,\cdot\rangle),\\
C_{66}&=\frac{1}{4}\eta_1\otimes^s\langle\eta_1,\cdot\rangle+\frac{1}{4}\mathrm{Id}\otimes^s\langle\eta_1\otimes\eta_2,\cdot\rangle.
\end{split}
\]

 The principal symbol of $\frac{\mathrm{d}'_\mathsf{F}}{\mathrm{i}}:S^2_0 {^{sc}}TX\rightarrow S^2_1{^{sc}}TX$ on scattering symmetric $(2,0)$-tensors is
\[
\left(\begin{array}{ccc}
\xi+\mathrm{i}\mathsf{F} &&\\
\eta_2&&\\
&\xi+\mathrm{i}\mathsf{F} &\\
&\eta_2&\\
&&\xi+\mathrm{i}\mathsf{F} \\
&&\eta_2
\end{array}\right)+R.
\]
The principal symbol of $\frac{\delta_\mathsf{F}}{\mathrm{i}}:S^2_1 {^{sc}}TX\rightarrow S^2_0{^{sc}}TX$ on scattering symmetric $(2,1)$-tensors is
\[
\left(\begin{array}{cccccc}
\xi-\mathrm{i}\mathsf{F} &\langle\eta_2,\cdot\rangle&&&&\\
&&(\xi-\mathrm{i}\mathsf{F})&\langle\eta_2,\cdot\rangle&&\\
&&&&\xi-\mathrm{i}\mathsf{F} &\langle\eta_2,\cdot\rangle
\end{array}\right)+R'.
\]
Here $R,R'$ are the principal symbols of operators in $\mathrm{Diff}^0_{sc}$.
Then the (semiclassical) principal symbol of $-\mathrm{d}_\mathsf{F}\delta_\mathsf{F}:S^2_1 {^{sc}}TX\rightarrow S^2_1{^{sc}}TX$ on scattering symmetric $(2,1)$-tensors is
\[
\left(\begin{array}{cccccc}
\xi^2+\mathsf{F}^2&(\xi+\mathrm{i}\mathsf{F})\langle\eta_2,\cdot\rangle &&&&\\
(\xi-\mathrm{i}\mathsf{F})\eta_2&\eta_2\langle\eta_2,\cdot\rangle&&&&\\
&&(\xi^2+\mathsf{F}^2)&(\xi+\mathrm{i}\mathsf{F})\langle\eta_2,\cdot\rangle &&\\
&&(\xi-\mathrm{i}\mathsf{F})\eta_2&\eta_2\langle\eta_2,\cdot\rangle&&\\
&&&&\xi^2+\mathsf{F}^2&(\xi+\mathrm{i}\mathsf{F})\langle\eta_2,\cdot\rangle \\
&&&&(\xi-\mathrm{i}\mathsf{F})\eta_2&\eta_2\langle\eta_2,\cdot\rangle\\
\end{array}\right).
\]
Here we also ignore terms in $\mathrm{Diff}^1_{sc}(X,S^2_1{^{sc}TX};S^2_1{^{sc}TX})+\mathsf{F}\mathrm{Diff}^0_{sc}(X,S^2_1{^{sc}TX};S^2_1{^{sc}TX})$.

The trace operator on (scattering) symmetric $(2,1)$-tensors
\[
\mu:S^{2}_{1}{^{sc}}TX\rightarrow S^{1}_{0}{^{sc}}TX,\quad(\mu f)^i=\delta_k^\ell f^{ik}_{\ell}
\]
has principal symbol
\[
\left(\begin{array}{cccccc}
1 & 0& 0&\langle \mathrm{Id},\cdot\rangle &0&0\\
0& 0& 1&0 & 0&\langle \mathrm{Id},\cdot\rangle\\
\end{array}\right),
\]
and its formal dual
\[
\lambda:S^{1}_{0}{^{sc}}TX\rightarrow S^{2}_{1}{^{sc}}TX,\quad(\mu w)^{ij}_k=\delta_j^k f^{i}
\]
 has principal symbol
\[
\left(\begin{array}{cc}
1 & 0\\
0& 0 \\
0&\frac{1}{2}\\
\frac{1}{2}\mathrm{Id}&0\\
0&0\\
0&\mathrm{Sym}_1\mathrm{Id}
\end{array}\right).
\]

By calculation,
the principal symbol of $-\lambda\mu\mathrm{d}'_\mathsf{F}\delta_\mathsf{F}$ is
\[
{\scriptsize
\left(\begin{array}{cccccc}
\xi^2+\mathsf{F}^2& (\xi+\mathrm{i}\mathsf{F})\langle\eta_2,\cdot\rangle& (\xi-\mathrm{i}\mathsf{F})\langle\eta_1,\cdot\rangle&\langle\eta_1\otimes\eta_2,\cdot\rangle&0&0\\
0&0&0&0&0&0\\
0& 0& \frac{1}{2}(\xi^2+\mathsf{F}^2)& \frac{1}{2}(\xi+\mathrm{i}\mathsf{F})\langle\eta_2,\cdot\rangle&  \frac{1}{2}(\xi-\mathrm{i}\mathsf{F})\langle\eta_1,\cdot\rangle&\frac{1}{2}\langle\eta_1\otimes\eta_2,\cdot\rangle\\
\frac{1}{2}(\xi^2+\mathsf{F}^2)\mathrm{Id}& \frac{1}{2}(\xi+\mathrm{i}\mathsf{F})\mathrm{Id}\langle\eta_2,\cdot\rangle& \frac{1}{2}(\xi-\mathrm{i}\mathsf{F})\mathrm{Id}\langle\eta_1,\cdot\rangle&\frac{1}{2}\mathrm{Id}\otimes^s\langle\eta_1\otimes\eta_2,\cdot\rangle&0&0\\
0&0&0&0&0&0\\
0& 0& (\xi^2+\mathsf{F}^2)\mathrm{Id}\otimes^s&(\xi+\mathrm{i}\mathsf{F})\mathrm{Id}\otimes^s\langle\eta_2,\cdot\rangle&  (\xi-\mathrm{i}\mathsf{F})\mathrm{Id}\otimes^s\langle\eta_1,\cdot\rangle&\mathrm{Id}\otimes^s\langle\eta_1\otimes\eta_2,\cdot\rangle
\end{array}\right).
}
\]

By \cite[Lemma 2.2]{de2021generic}, we have
\[
\mathrm{d}^\mathcal{B}_\mathsf{F}\delta^\mathcal{B}_\mathsf{F}=\mathcal{B}\mathrm{d}_\mathsf{F}\delta_\mathsf{F}=\mathrm{d}_\mathsf{F}\delta_\mathsf{F}-\frac{1}{2}\lambda\mu\mathrm{d}_\mathsf{F}\delta_\mathsf{F}:\mathcal{B}S^{2}_{1}{^{sc}}TX\rightarrow \mathcal{B}S^{2}_{1}{^{sc}}TX.
\]
Define
\[
\mathfrak{D}_1=\left(\begin{array}{cccccc}
-\partial_x+\mathsf{F} &&\nabla^1_y\cdot&&&\\
&-\partial_x+\mathsf{F}&&\nabla^1_y\cdot&&\\
&&-\partial_x+\mathsf{F}&&\nabla^1_y\cdot&\\
&&&-\partial_x+\mathsf{F}&&\nabla^1_y\cdot
\end{array}\right),
\]
acting on tensors in $S^{2}_{1}{^{sc}}TX$,
where $\nabla^1_y\cdot$ takes divergence (with respect to $\frac{h(x,y)\mathrm{d}y^2}{x^2}$) in $y$ on the upper indices.
The principal symbol of $\mathfrak{D}_1^*\mathfrak{D}_1:S^{2}_{1}{^{sc}}TX\rightarrow S^{2}_{1}{^{sc}}TX$ is
\[
{\tiny
\begin{split}
&\sigma(\mathfrak{D}_1^*\mathfrak{D}_1)\\
=&\left(\begin{array}{cccc}
\xi-\mathrm{i}\mathsf{F}&&&\\
&\xi-\mathrm{i}\mathsf{F}&&\\
-\frac{1}{2}\eta_1&&\frac{1}{2}(\xi-\mathrm{i}\mathsf{F})&\\
&-\frac{1}{2}\eta_1&&\frac{1}{2}(\xi-\mathrm{i}\mathsf{F})\\
&&-\eta_1\otimes^s&\\
&&&-\eta_1\otimes^s
\end{array}\right)
\left(\begin{array}{cccccc}
\xi+\mathrm{i}\mathsf{F}&&-\langle\eta_1,\cdot\rangle&&&\\
&\xi+\mathrm{i}\mathsf{F}&&-\langle\eta_1,\cdot\rangle&&\\
&&(\xi+\mathrm{i}\mathsf{F})&&-\langle\eta_1,\cdot\rangle&\\
&&&(\xi+\mathrm{i}\mathsf{F})&&-\langle\eta_1,\cdot\rangle
\end{array}\right)\\
=&\left(\begin{array}{cccccc}
\xi^2+\mathsf{F}^2&&-(\xi-\mathrm{i}\mathsf{F})\langle\eta_1,\cdot\rangle&&&\\
&\xi^2+\mathsf{F}^2&&-(\xi-\mathrm{i}\mathsf{F})\langle\eta_1,\cdot\rangle&&\\
-\frac{1}{2}(\xi+\mathrm{i}\mathsf{F})\eta_1&&\frac{1}{2}(\xi^2+\mathsf{F}^2+|\eta|^2)&&-\frac{1}{2}(\xi-\mathrm{i}\mathsf{F})\langle\eta_1,\cdot\rangle&\\
&-\frac{1}{2}(\xi+\mathrm{i}\mathsf{F})\eta_1&&\frac{1}{2}(\xi^2+\mathsf{F}^2+|\eta|^2)&&-\frac{1}{2}(\xi-\mathrm{i}\mathsf{F})\langle\eta_1,\cdot\rangle\\
&&-(\xi+\mathrm{i}\mathsf{F})\eta_1\otimes^s&&|\eta|^2&\\
&&&-(\xi+\mathrm{i}\mathsf{F})\eta_1\otimes^s&&|\eta|^2
\end{array}\right).
\end{split}
}
\]

By calculation, we obtain that the principal symbol of $-\delta_\mathsf{F}\lambda\mu\mathrm{d}_\mathsf{F}+\frac{1}{2}(-\mathrm{d}_\mathsf{F}\delta_\mathsf{F}+\frac{1}{2}\lambda\mu\mathrm{d}_\mathsf{F}\delta_\mathsf{F})+\frac{1}{4}\mathfrak{D}_1^*\mathfrak{D}_1:\mathcal{B}S^{2}_{1}{^{sc}}TX\rightarrow S^{2}_{1}{^{sc}}TX$ as
\[
\left(\begin{array}{ccc}
A_1&&\\
&A_2&\\
&&A_3
\end{array}\right),
\]
where
\[
A_1=\left(\begin{array}{cc}
\xi^2+\mathsf{F}^2&\frac{1}{2}(\xi+\mathrm{i}\mathsf{F})\langle\eta_2,\cdot\rangle\\
\frac{1}{2}(\xi-\mathrm{i}\mathsf{F})\eta_2&\frac{1}{2}(\xi^2+\mathsf{F}^2)+\frac{1}{2}\eta_2\langle\eta_2,\cdot\rangle
\end{array}\right),
\]

\[
A_2=\left(\begin{array}{cc}
\frac{3}{4}(\xi^2+\mathsf{F}^2)+\frac{1}{8}|\eta|^2+\frac{1}{8}\eta_1\langle\eta_1,\cdot\rangle
&\frac{1}{2}(\xi+\mathrm{i}\mathsf{F})\langle\eta_2,\cdot\rangle\\
\frac{1}{2}(\xi-\mathrm{i}\mathsf{F})\eta_2&\frac{1}{4}(\xi^2+\mathsf{F}^2)+\frac{1}{8}|\eta|^2+\frac{1}{8}\eta_1\langle\eta_1,\cdot\rangle+\frac{1}{2}\eta_2\langle\eta_2,\cdot\rangle
\end{array}\right),
\]
and
\[
A_3=\left(\begin{array}{cc}
\frac{1}{2}(\xi^2+\mathsf{F}^2)+\frac{1}{4}|\eta|^2+\frac{1}{4}\eta_1\otimes^s\langle\eta_1,\cdot\rangle&\frac{1}{2}(\xi+\mathrm{i}\mathsf{F})\langle\eta_2,\cdot\rangle\\
\frac{1}{2}(\xi-\mathrm{i}\mathsf{F})\eta_2&\frac{1}{4}|\eta|^2+\frac{1}{4}\eta_1\otimes^s\langle\eta_1,\cdot\rangle+\frac{1}{2}\eta_2\langle\eta_2,\cdot\rangle
\end{array}\right).
\]

By tedious calculation, we see that the principal symbol of $-\delta_\mathsf{F}\mathrm{d}_\mathsf{F}-\frac{4}{5}(-\delta_\mathsf{F}\lambda\mu\mathrm{d}_\mathsf{F}+\frac{1}{2}(-\mathrm{d}_\mathsf{F}\delta_\mathsf{F}+\frac{1}{2}\lambda\mu\mathrm{d}_\mathsf{F}\delta_\mathsf{F}))-\frac{1}{5}\mathfrak{D}_1^*\mathfrak{D}_1$ is
\[
\left(\begin{array}{ccc}
B_1&&\\
&B_2&\\
&&B_3
\end{array}\right),
\]
where
\[
B_1=\left(\begin{array}{cc}
\frac{1}{5}(\xi^2+\mathsf{F}^2)+\frac{1}{2}|\eta|^2&\frac{1}{10}(\xi+\mathrm{i}\mathsf{F})\langle\eta_2,\cdot\rangle\\
\frac{1}{10}(\xi-\mathrm{i}\mathsf{F})\eta_2&\frac{1}{10}(\xi^2+\mathsf{F}^2)+\frac{1}{2}|\eta|^2+\frac{1}{10}\eta_2\langle\eta_2,\cdot\rangle
\end{array}\right),
\]

\[
B_2=\left(\begin{array}{cc}
\frac{2}{5}(\xi^2+\mathsf{F}^2+|\eta|^2)-\frac{1}{10}\eta_1\langle\eta_1,\cdot\rangle&\frac{1}{10}(\xi+\mathrm{i}\mathsf{F})\langle\eta_2,\cdot\rangle\\
\frac{1}{10}(\xi-\mathrm{i}\mathsf{F})\eta_2&\frac{3}{10}(\xi^2+\mathsf{F}^2)+\frac{2}{5}|\eta|^2+\frac{1}{10}\eta_2\langle\eta_2,\cdot\rangle-\frac{1}{10}\eta_1\langle\eta_1,\cdot\rangle
\end{array}\right),
\]
and
\[
B_3=\left(\begin{array}{cc}
\frac{3}{5}(\xi^2+\mathsf{F}^2+|\eta|^2)-\frac{1}{5}\eta_1\otimes^s\langle\eta_1,\cdot\rangle&\frac{1}{10}(\xi+\mathrm{i}\mathsf{F})\langle\eta_2,\cdot\rangle\\
\frac{1}{10}(\xi-\mathrm{i}\mathsf{F})\eta_2&\frac{1}{2}(\xi^2+\mathsf{F}^2)+\frac{3}{10}|\eta|^2+\frac{1}{10}\eta_2\langle\eta_2,\cdot\rangle-\frac{1}{5}\eta_1\otimes^s\langle\eta_1,\cdot\rangle
\end{array}\right).
\]
Notice that
\[
B_1=\left(\begin{array}{cc}
\frac{1}{10}(\xi^2+\mathsf{F}^2)+\frac{1}{2}|\eta|^2&0\\
0&\frac{1}{10}(\xi^2+\mathsf{F}^2)+\frac{1}{2}|\eta|^2
\end{array}\right)+\frac{1}{10}\left(\begin{array}{cc}
\xi^2+\mathsf{F}^2&(\xi+\mathrm{i}\mathsf{F})\langle\eta_2,\cdot\rangle\\
(\xi-\mathrm{i}\mathsf{F})\eta_2&\eta_2\langle\eta_2,\cdot\rangle
\end{array}\right),
\]

\[
\begin{split}
B_2=\left(\begin{array}{cc}
\frac{3}{10}(\xi^2+\mathsf{F}^2)+\frac{2}{5}|\eta|^2-\frac{1}{10}\eta_1\langle\eta_1,\cdot\rangle&0\\
0&\frac{3}{10}(\xi^2+\mathsf{F}^2)+\frac{2}{5}|\eta|^2-\frac{1}{10}\eta_1\langle\eta_1,\cdot\rangle
\end{array}\right)\\
+\frac{1}{10}\left(\begin{array}{cc}
(\xi^2+\mathsf{F}^2)&(\xi+\mathrm{i}\mathsf{F})\langle\eta_2,\cdot\rangle\\
(\xi-\mathrm{i}\mathsf{F})\eta_2&\eta_2\langle\eta_2,\cdot\rangle
\end{array}\right),
\end{split}
\]
and
\[
\begin{split}
B_3=\left(\begin{array}{cc}
\frac{1}{2}(\xi^2+\mathsf{F}^2)+\frac{3}{10}|\eta|^2-\frac{1}{5}\eta_1\otimes^s\langle\eta_1,\cdot\rangle&0\\
0&\frac{1}{2}(\xi^2+\mathsf{F}^2)+\frac{3}{10}|\eta|^2-\frac{1}{5}\eta_1\otimes^s\langle\eta_1,\cdot\rangle
\end{array}\right)\\
+\frac{1}{10}\left(\begin{array}{cc}
\xi^2+\mathsf{F}^2&(\xi+\mathrm{i}\mathsf{F})\langle\eta_2,\cdot\rangle\\
(\xi-\mathrm{i}\mathsf{F})\eta_2&\eta_2\langle\eta_2,\cdot\rangle
\end{array}\right).
\end{split}
\]

Notice that
\[
\begin{split}
&\left(\begin{array}{cc}
\frac{1}{10}(\xi^2+\mathsf{F}^2)+\frac{1}{2}|\eta|^2&0\\
0&\frac{1}{10}(\xi^2+\mathsf{F}^2)+\frac{1}{2}|\eta|^2
\end{array}\right)\\
=&\frac{1}{10}\sigma\left(\begin{array}{cc}
\nabla_\mathsf{F}^*\nabla_\mathsf{F}&0\\
0&\nabla_\mathsf{F}^*\nabla_\mathsf{F}
\end{array}\right)+\frac{2}{5}\sigma\left(\begin{array}{cc}
-\Delta_y&0\\
0&-\Delta_y
\end{array}\right),
\end{split}
\]

\[
\begin{split}
&\left(\begin{array}{cc}
\frac{3}{10}(\xi^2+\mathsf{F}^2)+\frac{2}{5}|\eta|^2-\frac{1}{10}\eta_1\langle\eta_1,\cdot\rangle&0\\
0&\frac{3}{10}(\xi^2+\mathsf{F}^2)+\frac{2}{5}|\eta|^2-\frac{1}{10}\eta_1\langle\eta_1,\cdot\rangle
\end{array}\right)\\
=&\frac{3}{10}\sigma\left(\begin{array}{cc}
\nabla_\mathsf{F}^*\nabla_\mathsf{F}&0\\
0&\nabla_\mathsf{F}^*\nabla_\mathsf{F}
\end{array}\right)+\frac{1}{10}\sigma\left(\begin{array}{cc}
-\Delta_y+\nabla_y^1\nabla_y^1\cdot&0\\
0&-\Delta_y+\nabla_y^1\nabla_y^1\cdot
\end{array}\right),
\end{split}
\]
and
\[
\begin{split}
&\left(\begin{array}{cc}
\frac{1}{2}(\xi^2+\mathsf{F}^2)+\frac{3}{10}|\eta|^2-\frac{1}{5}\eta_1\otimes^s\langle\eta_1,\cdot\rangle&0\\
0&\frac{1}{2}(\xi^2+\mathsf{F}^2)+\frac{3}{10}|\eta|^2-\frac{1}{5}\eta_1\otimes^s\langle\eta_1,\cdot\rangle
\end{array}\right)\\
=&\frac{1}{10}\sigma\left(\begin{array}{cc}
\nabla_\mathsf{F}^*\nabla_\mathsf{F}&0\\
0&\nabla_\mathsf{F}^*\nabla_\mathsf{F}
\end{array}\right)+\frac{2}{5}\sigma\left(\begin{array}{cc}
-\partial_x^2+\mathsf{F}^2&0\\
0&-\partial_x^2+\mathsf{F}^2
\end{array}\right)+\frac{1}{5}\sigma\left(\begin{array}{cc}
-\Delta_y+\nabla_y^{1,s}\nabla_y^1\cdot&0\\
0&-\Delta_y+\nabla_y^{1,s}\nabla_y^1\cdot
\end{array}\right).
\end{split}
\]
Here $\nabla_\mathsf{F}=e^{-\mathsf{F}/x}\nabla e^{\mathsf{F}/x}$ with $\nabla$ gradient relative to $g_{sc}$.
Writing tensors as vectors, we define the operator
\[
\mathfrak{D}_2=\left(\begin{array}{cccccc}
\sqrt{\frac{2}{5}}\nabla_y&&&&&\\
&\sqrt{\frac{2}{5}}\nabla_y&&&&\\
&&\sqrt{\frac{1}{5}}\nabla_\mathsf{F}&&&\\
&&&\sqrt{\frac{1}{5}}\nabla_\mathsf{F}&&\\
&&&&\sqrt{\frac{2}{5}}(\partial_x+\mathsf{F})&\\
&&&&&\sqrt{\frac{2}{5}}(\partial_x+\mathsf{F})
\end{array}\right).
\]
Also define
\[
\mathfrak{D}_3=\left(\begin{array}{cccccc}
0&&&&&\\
&0&&&&\\
&&\sqrt{\frac{1}{10}}\nabla_y&&&\\
&&&\sqrt{\frac{1}{10}}\nabla_y&&\\
&&&&\sqrt{\frac{1}{5}}\nabla_y&\\
&&&&&\sqrt{\frac{1}{5}}\nabla_y
\end{array}\right)
\]
and
\[
\mathfrak{D}_4=\left(\begin{array}{cccccc}
0&&&&&\\
&0&&&&\\
&&\sqrt{\frac{1}{10}}\nabla^1_y\cdot&&&\\
&&&\sqrt{\frac{1}{10}}\nabla^1_y\cdot&&\\
&&&&\sqrt{\frac{1}{5}}\nabla^1_y\cdot&\\
&&&&&\sqrt{\frac{1}{5}}\nabla^1_y\cdot
\end{array}\right).
\]
We also take their adjoints $\mathfrak{D}_2^*,\mathfrak{D}_3^*,\mathfrak{D}_4^*$ with respect to the inner product on $S_1^2 {^{sc}}TX$ induced by the metric $g_{sc}$.
One can verify that acting on $\mathcal{B}S^2_1{^{sc}}TX$
\[
\begin{split}
&\sigma(-\delta_\mathsf{F}\mathrm{d}_\mathsf{F}-\frac{4}{5}(-\delta_\mathsf{F}\lambda\mu\mathrm{d}_\mathsf{F}+\frac{1}{2}(-\mathrm{d}_\mathsf{F}\delta_\mathsf{F}+\frac{1}{2}\lambda\mu\mathrm{d}_\mathsf{F}\delta_\mathsf{F})))-\frac{1}{5}\sigma(\mathfrak{D}_1^*\mathfrak{D}_1)\\
=&\frac{1}{10}\sigma(\nabla_\mathsf{F}^*\nabla_\mathsf{F})-\frac{1}{10}\sigma(\mathrm{d}_\mathsf{F}\delta_\mathsf{F})+\sigma(\mathfrak{D}_2^*\mathfrak{D}_2)+\sigma(\mathfrak{D}_3^*\mathfrak{D}_3)-\sigma(\mathfrak{D}_4^*\mathfrak{D}_4).
\end{split}
\]
Notice that $-\mathrm{d}_\mathsf{F}\delta_\mathsf{F}+\frac{1}{2}\lambda\mu\mathrm{d}_\mathsf{F}\delta_\mathsf{F}=-\mathrm{d}^\mathcal{B}_\mathsf{F}\delta^\mathcal{B}_\mathsf{F}$.
Projecting to the trace-free subspace $\mathcal{B}S^2_1{^{sc}}TX$,  and with all the adjoints retaken with respect to the inner product on $\mathcal{B}S^2_1{^{sc}}TX$, we have the following decomposition of the operator $-\delta^\mathcal{B}_\mathsf{F}\mathrm{d}^\mathcal{B}_\mathsf{F}:\mathcal{B}S^2_1{^{sc}}TX\rightarrow\mathcal{B}S^2_1{^{sc}}TX$
\[
\begin{split}
&\sigma(-\delta^\mathcal{B}_\mathsf{F}\mathrm{d}^\mathcal{B}_\mathsf{F})=\sigma(-\delta_\mathsf{F}\mathrm{d}_\mathsf{F}+\frac{4}{5}\delta_\mathsf{F}\lambda\mu\mathrm{d}_\mathsf{F})\\
=&\frac{1}{10}\sigma(\nabla_\mathsf{F}^*\nabla_\mathsf{F})+\frac{1}{2}\sigma(-\mathrm{d}^\mathcal{B}_\mathsf{F}\delta^\mathcal{B}_\mathsf{F})+\frac{1}{5}\sigma(\mathfrak{D}_1^*\mathfrak{D}_1)+\sigma(\mathfrak{D}_2^*\mathfrak{D}_2)+\sigma(\mathfrak{D}_3^*\mathfrak{D}_3)-\sigma(\mathfrak{D}_4^*\mathfrak{D}_4).
\end{split}
\]
Since we have been considering the principal symbols as semiclassical ones,
this shows that
\[
-\Delta_\mathsf{F}^\mathcal{B}=-\delta^\mathcal{B}_\mathsf{F}\mathrm{d}^\mathcal{B}_\mathsf{F}=\frac{1}{10}\nabla_\mathsf{F}^*\nabla_\mathsf{F}-\frac{1}{2}\mathrm{d}_\mathsf{F}^\mathcal{B}\delta^\mathcal{B}_\mathsf{F}+\frac{1}{5}\mathfrak{D}_1^*\mathfrak{D}_1+\mathfrak{D}_2^*\mathfrak{D}_2+\mathfrak{D}_3^*\mathfrak{D}_3-\mathfrak{D}_4^*\mathfrak{D}_4+A+B,
\]
with $A\in\mathrm{Diff}^1_{sc}(X,\mathcal{B}S^2_1{^{sc}TX};\mathcal{B}S^2_1{^{sc}TX})$ and $B\in h^{-1}\mathrm{Diff}^0_{sc}(X,\mathcal{B}S^2_1{^{sc}TX};\mathcal{B}S^2_1{^{sc}TX})$. It is easy to see that
\[
\|\mathfrak{D}_3v\|^2-\|\mathfrak{D}_4v\|^2\geq -\|v\|_{L^2_{sc}}.
\]

Similar to \cite{paper1}, we have the invertibility of $-\Delta_\mathsf{F}^\mathcal{B}$:
\begin{lemma}
The operator $-\Delta^\mathcal{B}_\mathsf{F}\in\mathrm{Diff}_{sc}^{2,0}(X;\mathcal{B}S^2_1{^{sc}TX},\mathcal{B}S^2_1{^{sc}TX})$, considered as a map $\dot{H}^{1,0}_{sc}\rightarrow (\dot{H}^{1,0}_{sc})^*=\overline{H}^{-1,0}_{sc}$, is invertible for $\mathsf{F}$ sufficiently large.
\end{lemma}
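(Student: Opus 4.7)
The plan is to derive a coercive energy estimate for $-\Delta^\mathcal{B}_\mathsf{F}$ on $\dot H^{1,0}_{sc}$ from the decomposition just established, and then invoke the Lax--Milgram theorem. With $h=\mathsf{F}^{-1}$ treated as a semiclassical parameter and $v\in \dot H^{1,0}_{sc}(X;\mathcal{B}S^2_1{^{sc}T}X)$, pairing against $v$ and integrating by parts turns every summand other than $A,B$ into a square, yielding
\[
\langle -\Delta^\mathcal{B}_\mathsf{F} v, v\rangle = \tfrac{1}{10}\|\nabla_\mathsf{F} v\|^2 + \tfrac{1}{2}\|\delta^\mathcal{B}_\mathsf{F} v\|^2 + \tfrac{1}{5}\|\mathfrak{D}_1 v\|^2 + \|\mathfrak{D}_2 v\|^2 + \bigl(\|\mathfrak{D}_3 v\|^2 - \|\mathfrak{D}_4 v\|^2\bigr) + \langle (A+B)v, v\rangle.
\]

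The first four terms are nonnegative; the indefinite combination $\|\mathfrak{D}_3 v\|^2 - \|\mathfrak{D}_4 v\|^2$ is bounded below by $-C\|v\|^2_{L^2_{sc}}$ by the inequality noted immediately before the statement. The remainder $A\in\mathrm{Diff}^1_{sc}$ is controlled by Cauchy--Schwarz as $|\langle Av, v\rangle| \leq \varepsilon\|\nabla v\|^2_{L^2_{sc}} + C_\varepsilon\|v\|^2_{L^2_{sc}}$, with the first piece absorbed into $\tfrac{1}{10}\|\nabla_\mathsf{F} v\|^2$ once $\varepsilon$ is taken small. The contribution $|\langle Bv, v\rangle|\leq C\mathsf{F}\|v\|^2_{L^2_{sc}}$ arises from $B\in h^{-1}\mathrm{Diff}^0_{sc}$.

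All leftover $L^2_{sc}$-errors are of total order $O(\mathsf{F}\|v\|^2_{L^2_{sc}})$ and get absorbed into the $\mathsf{F}^2\|v\|^2_{L^2_{sc}}$-piece of $\|\nabla_\mathsf{F} v\|^2$ produced by the $e^{-\mathsf{F}/x}$-conjugation, provided $\mathsf{F}$ is taken sufficiently large. This delivers the coercivity bound $\langle -\Delta^\mathcal{B}_\mathsf{F} v, v\rangle \geq c\|v\|^2_{\dot H^{1,0}_{sc}}$, and combined with the trivial continuity of the associated bilinear form on $\dot H^{1,0}_{sc}\times \dot H^{1,0}_{sc}$, Lax--Milgram yields the asserted invertibility $-\Delta^\mathcal{B}_\mathsf{F}\colon \dot H^{1,0}_{sc}\to \overline H^{-1,0}_{sc}$.

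The main obstacle is arranging that the combined negative contributions from $-\mathfrak{D}_4^*\mathfrak{D}_4$ and the $h^{-1}$-size remainder $B$ never outweigh the positive terms: each can individually exhaust the $\mathsf{F}\|v\|^2_{L^2_{sc}}$ budget, and only the $\mathsf{F}^2$-gain from the conjugated gradient makes the estimate close uniformly as $\mathsf{F}\to\infty$. A secondary technical point is the justification of the integration by parts near the boundary $\partial X$, which requires the Dirichlet-type condition built into $\dot H^{1,0}_{sc}$; this is handled by density exactly as in the longitudinal analogue of \cite{paper1}.
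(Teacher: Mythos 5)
Your argument is correct and follows the same route the paper invokes by deferring to its predecessor \cite{paper1}: pair the operator decomposition with $v$, turn each positive summand into a square, bound $\langle(A+B)v,v\rangle$ by $\varepsilon\|v\|^2_{H^{1,0}_{sc}}+C_\varepsilon\mathsf{F}\|v\|^2_{L^2_{sc}}$, absorb these errors into the $\mathsf{F}^2$-gain from $\|\nabla_\mathsf{F} v\|^2$, and close with Lax--Milgram on $\dot H^{1,0}_{sc}$, with the integration by parts near $\partial X$ justified by density. One small clarification to your closing discussion: the indefinite piece $\|\mathfrak{D}_3 v\|^2-\|\mathfrak{D}_4 v\|^2$ costs only an $\mathsf{F}$-independent $O(\|v\|^2_{L^2_{sc}})$ error since neither $\mathfrak{D}_3$ nor $\mathfrak{D}_4$ carries an $\mathsf{F}$-weight, so it is the $h^{-1}\mathrm{Diff}^0_{sc}$ term $B$ alone that is genuinely in competition with the $\mathsf{F}^2$ coercivity budget.
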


The rest of the proofs for Theorem \ref{thmlocal22} and  Theorem \ref{thmglobal22} is similar to the proofs for $L_{1,1}$ in \cite{paper1}.

\bibliographystyle{abbrv}
\bibliography{biblio}
\end{document}